\renewcommand\theequation{\thesection.\arabic{equation}}
\newtheorem{Theorem}{Theorem}[section]
\newtheorem{Lemma}[Theorem]{Lemma}
\newtheorem{Proposition}{Proposition}[section]
\newtheorem{remark}[Theorem]{Remark}
\renewcommand{\emptyset}{\varnothing}
\title[Multiple solutions for higher order fractional Laplace equation]{Multiple solutions \\ for elliptic equations driven by higher order fractional Laplacian}
\author[F. Cheng]{Fuwei Cheng}
\address{School of Mathematical Sciences, Beijing Normal University, No. 19, XinJieKouWai St., HaiDian District, Beijing 100875, P. R. China}
\email{fwcheng@mail.bnu.edu.cn}
\author[X. Su]{Xifeng Su}
\address{School of Mathematical Sciences, Laboratory of Mathematics and Complex Systems (Ministry of Education)\\
	Beijing Normal University,
	No. 19, XinJieKouWai St., HaiDian District, Beijing 100875, P. R. China}
\email{xfsu@bnu.edu.cn, billy3492@gmail.com}
\author[J. Zhang]{Jiwen Zhang}
\address{School of Mathematical Sciences, Beijing Normal University, No. 19, XinJieKouWai St., HaiDian District, Beijing 100875, P. R. China}
\address{Department of Mathematics and Statistics, University of Western Australia, 35 Stirling Highway, WA 6009 Crawley, Australia}
\email{jwzhang628@mail.bnu.edu.cn, jiwen.zhang@uwa.edu.au}
\begin{document}
	
	\begin{abstract}
		We consider an elliptic partial differential equation driven by higher order fractional Laplacian $(-\Delta)^{s}$, $s \in (1,2)$ with homogeneous Dirichlet boundary condition
		\begin{equation*}
			\left\{%
			\begin{array}{ll}
				(-\Delta)^{s} u=f(x,u) & \text{ in }\Omega, \\
				u=0 &  \text{ in } \mathbb{R}^n \setminus \Omega. \\
			\end{array}%
			\right.
		\end{equation*}
		
	  The above equation has a variational nature, and we investigate the existence and multiplicity results for its weak solutions under various conditions on the nonlinear term $f$:  superlinear growth, concave-convex and symmetric conditions and their combinations.
	    
	  The existence  of two different non-trivial weak solutions is established by Mountain Pass Theorem and Ekeland's variational principle, respectively. 
	  Furthermore, due to Fountain Theorem and its dual form, both infinitely many weak solutions with positive energy and infinitely many weak solutions with negative energy are obtained.
	\end{abstract}

	\maketitle
	\noindent{\it Keywords:} higher order fractional Laplacian, Mountain Pass Theorem, Ekeland's variational method, Fountain Theorem, Dual Fountain Theorem.
	
	\thanks{The datasets generated and/or analysed during the current study are available from the corresponding author on reasonable request.}
	

	\section{Introduction}
	It is well-known that any positive power $s$ of the  Laplacian can be thought of as a pseudo-differential operator which can be defined via the Fourier transform $\mathscr{F}$ in the following way
	 (see \cite{MR1347689} for instance), 
	 	\begin{equation*}
		(-\Delta)^{s} \varphi(x)=\mathscr{F}^{-1}\left(|\cdot|^{2 s} \mathscr{F}(\varphi)\right)(x) \quad \text { for all } \varphi \in C_{c}^{\infty}\left(\mathbb{R}^{n}\right).
	\end{equation*}\par 
	
		We will consider mainly the existence and multiplicity results of weak solutions to  the following elliptic problem driven by the nonlocal  fractional Laplacian with homogeneous Dirichlet boundary condition: 
	\begin{equation}\label{problem}
		\left\{%
		\begin{array}{ll}
			(-\Delta)^{s} u=f(x,u)   &\text{in } \Omega, \\
			 u=0   &\text{in }\mathbb{R}^n \setminus \Omega 
		\end{array}%
		\right.
	\end{equation} 
	where $0<s\notin \mathbb{Z}$, $n>2s$ and $\Omega \subset \mathbb{R}^{n}$ is an open bounded set with sufficiently smooth boundary.
	
	For $s\in (0,1)$, there is a great attention dedicated to the so-called standard fractional Laplacian (to describe the $2s$-stable L\'evy process), such as the thin obstacle problem \cite{Silvestre07,CSS08}, minimal surface \cite{CR10,CV11}, phase transitions \cite{SV09} and so on. See \cite{DPV12} and references therein  for an introduction to the literature.	
	The results for the existence and multiplicity of weak solutions are well established in the framework of variational analysis, see for example \cite{SV12, WS15}. See also \cite{SVWZ24} for  more general operators.
		
	For $1<s \notin \mathbb{Z}$, the operators $(-\Delta)^{s}$ can be viewed as the nonlocal counterparts of polyharmonic operators. The Higher-order fractional Laplacians arise for example in geometry in connection to the fractional Paneitz operator on the hyperbolic space \cite{MR1965361,MR2737789}, in the theory of the Navier-Stokes equation  as a hyper-dissipative term  \cite{MR1911664, MR2603802}, and in generalizations of the Lane-Emden equations \cite{MR3636636}. 
	
	We would remark that the combination of nonlocality and polyharmonicity really poses new challenges --- for instance, it is not clear so far that the $L^{\infty}$~boundedness is valid in this setting and whether the methods of Nehari manifold may apply. One may refer to \cite{Abatangelo22} and references therein for a rather complete survey of recent results for higher order fractional Laplacians.
	
	Following the spirit of \cite{DG17} for instance,  we will adopt the idea of the composition of operators to obtain the equivalent definition of $(-\Delta)^{s}$ below: 
	for any given $s=m+\sigma$ where $m \in \mathbb{Z}_{+}, \sigma \in (0,1)$, we have
	 \[(-\Delta)^{s} u(x)=(-\Delta)^{m}(-\Delta)^{\sigma} u(x) \]
	for all $u \in C^{2\beta}(\Omega) \cap \mathscr{L}_{\sigma}^{1}$ with $\beta>s $, where 
	\[ \mathscr{L}_{\sigma}^{1}:=\left\{u \in L_{\text {loc }}^{1}(\mathbb{R}^{n}): \int_{\mathbb{R}^{n}} \frac{|u(x)|}{1+|x|^{n+2 \sigma}} d x<\infty\right\}.\]
	Here $(-\Delta)^{m}$ is the $m$-Laplacian
	given by
	\[(-\Delta)^{m} u:=\left(-\sum_{i=1}^{n} \partial_{i i}\right)^{m} u\]
	and $(-\Delta)^\sigma$ is the standard fractional Laplacian given by
	 \[
	  (-\Delta)^{\sigma } u(x) :=C_{n, \sigma }\lim_{\varepsilon\searrow0} \int_{\mathbb{R}^{n}\setminus B_\varepsilon(x)}  \frac{u(x)-u(y)}{|x-y|^{n+2\sigma} }dy, \quad \text{ with } C_{n, \sigma } =-\frac{2^{2 \sigma} \Gamma(n / 2+\sigma)}{\pi^{n / 2}\Gamma(-\sigma )}. 
	  \]
	 	
	Without loss of generality, in this paper, we will fix the exponent $s \in (1,2)$  in problem~\eqref{problem} to avoid unnecessary technicalities. In fact, it is rather immediate to obtain the present results for $2<s\in \mathbb{Z}$ with several modifications on the spaces of functions, energy functionals, etc.
		
	Our main goal is to show the existence and multiplicity results for weak solutions of the Dirichlet problem \eqref{problem} with $s\in (1,2)$  under various conditions on the nonlinear term $f$:  superlinear growth, concave-convex, symmetric conditions and their combinations. This would be a starting point for us to carry on the further studies on the existence, multiplicity and regularity in the future works.
			
	To begin with,  we give the following standard assumptions of nonlinear analysis (see \cite{MIYAGAKI08, WS15} for instance).
			Assume that $f: \overline{\Omega} \times \mathbb{R} \rightarrow \mathbb{R}$ is a continuous function verifying the following conditions:
	\begin{itemize}
		\item [\textbf{(H1)}] there exist $a_{1}, a_{2}>0$ and $q \in\left(2,2^{*}_{s}\right)$, where $2^*_s:=\frac{2n}{n-2s}$, such that
		\[|f(x, t)| \leqslant a_{1}+a_{2}|t|^{q-1} \quad \text { a.e. } x \in \Omega,~~ t \in \mathbb{R} ;\]
		\item[\textbf{(H2)}] $\lim \limits_{|t| \rightarrow 0} \frac{f(x, t)}{|t|}=0$ uniformly in $ x \in \Omega$;
		\item[\textbf{(H3)}] $\lim \limits_{|t| \rightarrow +\infty} \frac{F(x, t)}{t^{2}}=+\infty$ uniformly for a.e. $x \in \Omega$, where $F(x, t):=\int_{0}^{t} f(x, \tau) d \tau$;
		\item[\textbf{(H4)}] there exists $T_{0}>0$ such that for any $x \in \Omega$, the function
		\begin{center}
			 $t \mapsto \frac{f(x, t)}{t}$ is increasing in $t > T_{0}$, and decreasing in $t < -T_{0}$.
		\end{center}
	\end{itemize}    
	
	We remark that (H3), (H4)  are a bit weaker assumptions (several specific examples are given in Theorem~\ref{Thm: application}) than the following Ambrosetti-Rabinowitz condition \cite{AR73}:
	\begin{itemize}
	\item [\textbf{(AR)}]
	There exist $\zeta>2$ and $r>0$ such that a.e. $x \in \Omega$,  $t \in \mathbb{R},~~|t| \geqslant r$		
	\[0<\zeta F(x, t) \leqslant t f(x, t).\]
	\end{itemize}

    Now,  we state our first result. 	
	\begin{Theorem}\label{Mountain Pass sol theorem}
		Assume $s\in (1,2)$. Let $f$ be a continuous function verifying (H1)-(H4).
		Then, the superlinear problem~\eqref{problem} admits a non-trivial Mountain Pass solution $u \in \mathcal{H}_{0}^{s}$ with positive energy $\mathcal{J}(u) $ (see \eqref{J} below). 
	\end{Theorem}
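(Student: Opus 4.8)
The plan is to realise weak solutions of \eqref{problem} as critical points of the energy functional $\mathcal{J}$ from \eqref{J} and to apply the Mountain Pass Theorem in the Hilbert space $\mathcal{H}_0^s$. First I would record that, under (H1), the functional $\mathcal{J}(u)=\tfrac12\|u\|^2-\int_\Omega F(x,u)\,dx$ is well defined and of class $C^1$ on $\mathcal{H}_0^s$, with $\langle\mathcal{J}'(u),v\rangle=\langle u,v\rangle-\int_\Omega f(x,u)v\,dx$, so that its critical points are exactly the weak solutions; here the subcritical exponent $q<2^*_s$ is used together with the continuous embedding $\mathcal{H}_0^s\hookrightarrow L^q(\Omega)$ and its compactness in order to make the Nemytskii operators behave well.

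Next I would verify the mountain-pass geometry. For the local behaviour at the origin, combining (H1) and (H2) gives, for every $\varepsilon>0$, a bound $F(x,t)\le\varepsilon|t|^2+C_\varepsilon|t|^q$; inserting this and the Sobolev embedding into $\mathcal{J}$ yields $\mathcal{J}(u)\ge\tfrac12\|u\|^2-\varepsilon C\|u\|^2-C'\|u\|^q$, so that choosing $\varepsilon$ small produces constants $\rho,\alpha>0$ with $\mathcal{J}(u)\ge\alpha$ whenever $\|u\|=\rho$. For the far behaviour, I would fix any $u_0\neq0$ and use the superlinearity (H3): it forces $\int_\Omega F(x,tu_0)\,dx\ge M t^2\|u_0\|_{L^2}^2-C$ for arbitrarily large $M$, whence $\mathcal{J}(tu_0)\to-\infty$ as $t\to+\infty$, giving a point $e$ with $\|e\|>\rho$ and $\mathcal{J}(e)<0$.

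The main work, and the principal obstacle, is the compactness condition. Since only (H3)--(H4) are assumed in place of the stronger (AR) condition, I would verify the Cerami condition rather than rely on the usual Ambrosetti--Rabinowitz argument. The first step is boundedness of a Cerami sequence $(u_n)$ at level $c$: arguing by contradiction, set $v_n=u_n/\|u_n\|$, so that up to a subsequence $v_n\rightharpoonup v$ in $\mathcal{H}_0^s$, $v_n\to v$ in $L^2(\Omega)$ and a.e. If $v\neq0$, then $|u_n(x)|\to\infty$ on $\{v\neq0\}$ and (H3) together with Fatou's lemma forces $\int_\Omega F(x,u_n)\,dx/\|u_n\|^2\to+\infty$, contradicting $\mathcal{J}(u_n)/\|u_n\|^2\to0$. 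If $v=0$, I would invoke the Jeanjean--Miyagaki--Souto device: choosing $t_n\in[0,1]$ with $\mathcal{J}(t_nu_n)=\max_{t\in[0,1]}\mathcal{J}(tu_n)$ and testing against the multiples $Rv_n$ shows $\mathcal{J}(t_nu_n)\to+\infty$. On the other hand (H4) makes $\mathcal{G}(x,t):=\tfrac12 tf(x,t)-F(x,t)$ nondecreasing in $|t|$ for $|t|\ge T_0$, whence $\int_\Omega\mathcal{G}(x,t_nu_n)\,dx\le\int_\Omega\mathcal{G}(x,u_n)\,dx+C$; but the right-hand side equals $\mathcal{J}(u_n)-\tfrac12\langle\mathcal{J}'(u_n),u_n\rangle+C$, which stays bounded, while the left-hand side equals $\mathcal{J}(t_nu_n)-\tfrac12\langle\mathcal{J}'(t_nu_n),t_nu_n\rangle\to+\infty$ (the term $\langle\mathcal{J}'(t_nu_n),t_nu_n\rangle$ being $0$ or $o(1)$ by the choice of $t_n$ and the Cerami property), which is the contradiction that closes the $v=0$ case.

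Once the sequence is bounded, I would extract $u_n\rightharpoonup u$ and upgrade to strong convergence in the standard way: writing $\|u_n-u\|^2=\langle\mathcal{J}'(u_n)-\mathcal{J}'(u),u_n-u\rangle+\int_\Omega\bigl(f(x,u_n)-f(x,u)\bigr)(u_n-u)\,dx$, the first term tends to $0$ since $\mathcal{J}'(u_n)\to0$ and $(u_n)$ is bounded, while the second tends to $0$ by (H1) and the compact embedding $\mathcal{H}_0^s\hookrightarrow L^q(\Omega)$. Hence $u_n\to u$ and the Cerami condition holds. Finally, the Mountain Pass Theorem yields a critical point $u$ at the minimax level $c\ge\alpha>0$; since $\mathcal{J}(0)=0<c=\mathcal{J}(u)$, the solution $u$ is non-trivial and has positive energy, as claimed. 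I expect the $v=0$ branch of the boundedness argument — extracting a contradiction purely from the weak monotonicity (H4) via the truncated maximisation — to be the delicate point, the more so as all estimates must be carried out consistently with the nonlocal higher-order norm of $\mathcal{H}_0^s$.
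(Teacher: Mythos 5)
Your proposal is correct and follows essentially the same route as the paper: the same mountain-pass geometry estimates from (H1)--(H3), the same Jeanjean--Miyagaki--Souto device (normalizing $u_n/\|u_n\|$, Fatou's lemma with (H3), then the $t_n$-maximization combined with the monotonicity coming from (H4)) to obtain boundedness, and the same compact-embedding argument to upgrade weak to strong convergence, exactly as in Propositions~\ref{MP geometry}--\ref{PSc convergent sequence}. The only difference is cosmetic: you package compactness as the Cerami condition, while the paper verifies boundedness and convergence of ordinary Palais--Smale sequences and invokes the standard Mountain Pass Theorem.
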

	 
	Inspired by \cite {Ambrosetti1994CombinedEO} for the case of the standard Laplacian and \cite {WS15} for the case of the fractional Laplacian, we consider the following more concrete concave-convex nonlinear problem
	\begin{equation}\label{problem2}
		\left\{
		\begin{array}{ll}
			(-\Delta)^{s} u=\lambda|u|^{p-2}u+\mu g(x,u)=:f_{\lambda, \mu}(x,u) & \text{in }\Omega, \\
			u=0 &  \text{in }\mathbb{R}^n\setminus \Omega \\
		\end{array}
		\right.
	\end{equation}
	where $1<p<2$, $\lambda, \mu \in \mathbb{R}$ are two parameters. 
	Here and in the sequel, we denote by $\mathcal{J}_{\lambda, \mu }$ the energy functional corresponding to problem \eqref{problem2}  and $\mathcal{J}_{\lambda}:=\mathcal{J}_{\lambda, 1}$ for simplicity. 
	 
	We then obtain the following theorem by Ekeland's variational principle.
	\begin{Theorem}\label{negative energy sol theorem}
		Assume that $s\in (1,2),\, \mu=1$ and  $g$ is a continuous function verifying (H1)-(H4). Then there exists $\lambda^{*}>0$ such that for $\lambda \in\left(0, \lambda^{*}\right)$, the concave-convex problem \eqref{problem2} has at least two non-trivial weak solutions $u,v$ satisfying $\mathcal{J_{\lambda}}(u)>0>\mathcal{J_{\lambda}}(v)$.
	\end{Theorem}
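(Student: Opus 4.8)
The plan is to produce the two solutions by two separate variational methods applied to the energy functional associated with \eqref{problem2} at $\mu=1$, namely
\[
\mathcal{J}_{\lambda}(u)=\frac{1}{2}\|u\|_{\mathcal{H}_0^s}^2-\frac{\lambda}{p}\int_\Omega|u|^p\,dx-\int_\Omega G(x,u)\,dx,
\]
where $G(x,t)=\int_0^t g(x,\tau)\,d\tau$. The governing heuristic is the concave--convex geometry: because $1<p<2$, the term $-\tfrac{\lambda}{p}\int_\Omega|u|^p$ dominates the quadratic part near the origin and forces $\mathcal{J}_\lambda$ to dip below $0$ on small spheres, which will yield the negative-energy solution $v$ as a local minimizer; meanwhile the superlinear term $g$ (through (H3)) drives $\mathcal{J}_\lambda\to-\infty$ along rays, while for $\lambda$ small there remains a strictly positive barrier at an intermediate radius, which will yield the positive-energy solution $u$ through the Mountain Pass Theorem.

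For the negative-energy solution I would first use (H1)--(H2) for $g$ together with the Sobolev embedding $\mathcal{H}_0^s\hookrightarrow L^r(\Omega)$, $r\in[2,2^*_s]$, to obtain, for $\|u\|_{\mathcal{H}_0^s}=t$,
\[
\mathcal{J}_\lambda(u)\ \geq\ \tfrac14 t^2-\tfrac{\lambda C_1}{p}t^{p}-C_2 t^{q},
\]
after absorbing the $\varepsilon\|u\|^2$ contribution of $G$. This lower bound shows that $\mathcal{J}_\lambda$ is bounded below on a closed ball $\overline{B}_\rho=\{\|u\|_{\mathcal{H}_0^s}\le\rho\}$ for a suitably small $\rho>0$ (the same radius used below for the Mountain Pass sphere), while testing with a small multiple of any fixed nonzero function gives $\inf_{\overline{B}_\rho}\mathcal{J}_\lambda<0=\mathcal{J}_\lambda(0)$ since the $t^p$ term dominates. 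Because the infimum lies strictly below the boundary values $\inf_{\|u\|=\rho}\mathcal{J}_\lambda\ge\alpha>0$, Ekeland's variational principle applied on $\overline{B}_\rho$ produces a \emph{free} Palais--Smale sequence at the negative infimum level; the Palais--Smale condition, established as in the proof of Theorem~\ref{Mountain Pass sol theorem}, then delivers a nontrivial critical point $v$ with $\mathcal{J}_\lambda(v)<0$.

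For the positive-energy solution I would verify the Mountain Pass geometry for $\mathcal{J}_\lambda$ based at the origin. Writing $\phi(t)=\tfrac14 t^2-C_2 t^{q}$, which has a strictly positive maximum at some $t_\ast>0$, I would choose $\lambda^\ast>0$ so small that for $0<\lambda<\lambda^\ast$ the concave correction $\tfrac{\lambda C_1}{p}t_\ast^{p}$ does not exhaust this maximum; then $\rho:=t_\ast$ gives $\inf_{\|u\|=\rho}\mathcal{J}_\lambda\ge\alpha>0$ even though $\mathcal{J}_\lambda$ is negative closer to $0$. The far point comes from (H3): along $t_0w$ for a fixed $w\neq0$ and large $t_0$, the superquadratic growth of $G$ (the concave $t_0^p$ term being of lower order) forces $\mathcal{J}_\lambda(t_0w)\to-\infty$, so some $e$ with $\|e\|_{\mathcal{H}_0^s}>\rho$ and $\mathcal{J}_\lambda(e)<0$ exists. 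Since $\mathcal{J}_\lambda(0)=0<\alpha$ and $\mathcal{J}_\lambda(e)<\alpha$, the Mountain Pass Theorem yields a critical value $c\ge\alpha>0$ and a corresponding solution $u$ with $\mathcal{J}_\lambda(u)=c>0$; in particular $u\neq v$, since their energies have opposite signs, and both are nontrivial.

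\emph{The main obstacle} I anticipate is verifying the Palais--Smale condition for $\mathcal{J}_\lambda$ in the absence of the Ambrosetti--Rabinowitz condition, both at the minimization level and at the Mountain Pass level $c$. The crux is the boundedness of Palais--Smale sequences: lacking (AR), I would argue by contradiction in the spirit of the proof of Theorem~\ref{Mountain Pass sol theorem}, using the monotonicity hypothesis (H4) on $t\mapsto g(x,t)/t$ together with (H3) to rule out an unbounded sequence after rescaling. The concave term is harmless here, since $\int_\Omega|u|^p\le C\|u\|_{\mathcal{H}_0^s}^p$ with $p<2$ is of strictly lower order than the quadratic leading term and affects neither the coercivity estimates on $\overline{B}_\rho$ nor the boundedness argument. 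Once compactness is secured, the passage to the limit and the explicit threshold $\lambda^\ast$ (determined by $\tfrac{\lambda C_1}{p}t_\ast^{p}<\phi(t_\ast)$) complete the proof.
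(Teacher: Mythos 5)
Your proposal is correct and follows essentially the same route as the paper: the positive-energy solution via the Mountain Pass Theorem after checking that for $\lambda<\lambda^{*}$ the concave term does not destroy the positive barrier on a small sphere, the negative-energy solution via Ekeland's variational principle on the closed ball bounded by that sphere (where the infimum is negative because the $t^{p}$ term dominates near the origin), and compactness obtained by adapting the Palais-Smale analysis of Theorem~\ref{Mountain Pass sol theorem} to $\mathcal{J}_{\lambda}$. The only notable difference is how the concave term is handled in the compactness step: the paper simply asserts that $f_{\lambda}=\lambda|u|^{p-2}u+g$ inherits (H1), (H3), (H4) from $g$ (which for (H4) is not literally true, since $\lambda t^{p-2}$ is decreasing in $t>0$), whereas your observation that the $|u|^{p}$ term is of strictly lower order and harmless in the rescaling/contradiction argument is the more accurate justification of the same conclusion.
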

	Our next goal is to show the existence of infinitely many solutions for both the superlinear problem \eqref{problem} and the concave-convex problem \eqref{problem2} by imposing the symmetry condition on the nonlinearity. Namely,  we assume further that $f$ satisfies
	\begin{itemize}
	\item [\textbf{(S)}] 
	$f(x,-t)=-f(x,t)$ \quad for any $x \in \Omega,\, t\in \mathbb{R}$.
	\end{itemize}
	
	Owing to the well-known Fountain Theorem and its dual form, which were first established in \cite{Bartsch93} and \cite{BW95} respectively  (see also \cite{Willembook}), we  have the following two corresponding theorems for the superlinear problem \eqref{problem} and the concave-convex problem \eqref{problem2} respectively.
	\begin{Theorem}\label{infinitely many sol Theorem}
		Assume $s\in (1,2)$. Let $f$ be a continuous function verifying (H1), (H3), (H4) and (S).
		Then, the superlinear problem \eqref{problem} admits infinitely many weak solutions $\{u_{j}\}_{j \in \mathbb{N}} \subset \mathcal{H}_{0}^{s}$ such that $\mathcal{J}(u_j) \rightarrow +\infty$ as $j \rightarrow +\infty$.
	\end{Theorem}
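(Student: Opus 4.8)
The plan is to apply the Fountain Theorem of Bartsch to the energy functional $\mathcal{J}(u)=\tfrac12\|u\|^2-\int_\Omega F(x,u)\,dx$ on the separable Hilbert space $X=\mathcal{H}_0^s$, where $\|\cdot\|$ denotes the $\mathcal{H}_0^s$-norm. By the symmetry hypothesis (S) the primitive $F(x,\cdot)$ is even, so $\mathcal{J}$ is an even $C^1$ functional whose critical points coincide with the weak solutions of \eqref{problem}. I would fix an orthonormal basis $\{e_j\}_{j\in\mathbb{N}}$ of $\mathcal{H}_0^s$ (for instance the Dirichlet eigenfunctions of $(-\Delta)^s$), set $X_j:=\mathbb{R}e_j$, and define $Y_k:=\bigoplus_{j=0}^{k}X_j$ and $Z_k:=\overline{\bigoplus_{j\ge k}X_j}$. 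It then suffices to exhibit radii $\rho_k>r_k>0$ realizing the two geometric conditions (A1) $\max_{u\in Y_k,\,\|u\|=\rho_k}\mathcal{J}(u)\le 0$ and (A2) $\inf_{u\in Z_k,\,\|u\|=r_k}\mathcal{J}(u)\to+\infty$ as $k\to\infty$, together with the compactness condition for $\mathcal{J}$ at every level.

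For (A2) the key auxiliary fact is that $\beta_k:=\sup_{u\in Z_k,\,\|u\|=1}\|u\|_{L^q(\Omega)}\to0$ as $k\to\infty$, a standard consequence of the compactness of the embedding $\mathcal{H}_0^s\hookrightarrow L^q(\Omega)$ for $q\in(2,2^*_s)$ applied on the shrinking subspaces $Z_k$. Using (H1) together with the bound $|F(x,t)|\le a_1|t|+\tfrac{a_2}{q}|t|^q$ and $L^q\hookrightarrow L^1$ on the bounded set $\Omega$, one gets for $u\in Z_k$ an estimate $\mathcal{J}(u)\ge\tfrac12\|u\|^2-C_1\beta_k\|u\|-C_2\beta_k^q\|u\|^q$; choosing $r_k\sim\beta_k^{-q/(q-2)}$ balances the quadratic and the order-$q$ terms and forces $\inf_{\|u\|=r_k}\mathcal{J}(u)\to+\infty$, since $q>2$ and $\beta_k\to0$ make $r_k\to\infty$ and $(\tfrac12-\tfrac1q)r_k^2$ dominate the lower-order term. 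Condition (A1) is softer: on the finite-dimensional $Y_k$ all norms are equivalent, and (H3) provides, for each $M>0$, a constant $C_M$ with $F(x,t)\ge M t^2-C_M$, whence $\mathcal{J}(u)\le(\tfrac12-Mc_k)\|u\|^2+C$ on $Y_k$; taking $M$ large makes the leading coefficient negative, so $\mathcal{J}(u)\to-\infty$ as $\|u\|\to\infty$ within $Y_k$, and a sufficiently large $\rho_k$ yields (A1). Both are routine computations.

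The heart of the matter, and the step I expect to be the main obstacle, is the compactness condition in the absence of the Ambrosetti--Rabinowitz condition; I would verify the Cerami condition (C)$_c$, which is admissible in the Fountain Theorem and which makes the boundedness argument go through. The only delicate point is the boundedness of a Cerami sequence $\{u_n\}$, for which I would run the argument of Jeanjean and Miyagaki--Souto: assuming $\|u_n\|\to\infty$, set $v_n:=u_n/\|u_n\|$ and pass to $v_n\rightharpoonup v$. If $v\ne0$, then $|u_n|\to+\infty$ on a set of positive measure and (H3) with Fatou's lemma forces $\int_\Omega F(x,u_n)/\|u_n\|^2\to+\infty$, contradicting the identity $\int_\Omega F(x,u_n)/\|u_n\|^2=\tfrac12-\mathcal{J}(u_n)/\|u_n\|^2\to\tfrac12$. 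If $v=0$, I would pick $t_n\in[0,1]$ maximizing $t\mapsto\mathcal{J}(tu_n)$; using (H1) and $v_n\to0$ in $L^q(\Omega)$ one shows $\mathcal{J}(t_nu_n)\to+\infty$, while $\langle\mathcal{J}'(t_nu_n),t_nu_n\rangle=0$ gives $\mathcal{J}(t_nu_n)=\int_\Omega\big(\tfrac12 f(x,t_nu_n)t_nu_n-F(x,t_nu_n)\big)\,dx$. The monotonicity encoded in (H4), which renders $\tfrac12 t f(x,t)-F(x,t)$ nondecreasing in $|t|$ beyond $T_0$, bounds this integral by $\mathcal{J}(u_n)-\tfrac12\langle\mathcal{J}'(u_n),u_n\rangle$ (up to a constant from the region $|t|\le T_0$); the Cerami property makes $\langle\mathcal{J}'(u_n),u_n\rangle\to0$, so the right-hand side stays bounded, contradicting $\mathcal{J}(t_nu_n)\to+\infty$. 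This dichotomy is exactly where (H3) and (H4) must together play the role usually reserved for (AR).

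Once boundedness is established, compactness follows by a standard scheme: a bounded Cerami sequence admits $u_n\rightharpoonup u$ in $\mathcal{H}_0^s$; the compact embedding $\mathcal{H}_0^s\hookrightarrow L^q(\Omega)$ and (H1) let me pass to the limit in $\int_\Omega f(x,u_n)(u_n-u)\,dx\to0$, after which $\langle\mathcal{J}'(u_n)-\mathcal{J}'(u),u_n-u\rangle\to0$ upgrades weak to strong convergence $u_n\to u$ through the Hilbert-space structure of the quadratic leading term. With the Cerami condition, (A1) and (A2) in hand, the (symmetric) Fountain Theorem produces an unbounded sequence of critical values, hence weak solutions $\{u_j\}\subset\mathcal{H}_0^s$ with $\mathcal{J}(u_j)\to+\infty$ as $j\to+\infty$, which is precisely the assertion.
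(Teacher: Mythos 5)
Your proposal is correct and follows the same overall route as the paper's proof: the Fountain Theorem on the eigenfunction decomposition $Y_{k}$, $Z_{k}$; the lemma $\beta_{k}\rightarrow 0$ for the geometry on $Z_{k}$ (your choice $r_{k}\sim\beta_{k}^{-q/(q-2)}$ is exactly the paper's $\gamma_{k}=\left(qC\beta_{k}^{q}\right)^{-1/(q-2)}$); norm equivalence plus (H3) on the finite-dimensional $Y_{k}$; and the Miyagaki--Souto dichotomy for boundedness (if $v\neq 0$, Fatou plus (H3) against the identity $\int_{\Omega}F(x,u_{n})/\|u_{n}\|^{2}_{\mathcal{H}_{0}^{s}}\rightarrow \tfrac{1}{2}$; if $v=0$, the maximizers $t_{n}\in[0,1]$ and the monotonicity of $tf(x,t)-2F(x,t)$ encoded in (H4)), followed by the standard compact-embedding argument to upgrade weak to strong convergence as in Proposition~\ref{PSc convergent sequence}. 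The one genuine difference is that you verify the Cerami condition $(C)_{c}$ rather than the Palais--Smale condition, and this is in fact a refinement worth keeping: in the paper's Proposition~\ref{PS bounded sequence}, the final convergence in \eqref{6}, namely $2\mathcal{J}(u_{j})-\langle\mathcal{J}^{\prime}(u_{j}),u_{j}\rangle+C_{*}|\Omega|\rightarrow 2c+C_{*}|\Omega|$, tacitly requires $\langle\mathcal{J}^{\prime}(u_{j}),u_{j}\rangle\rightarrow 0$; for a mere Palais--Smale sequence with $\|u_{j}\|_{\mathcal{H}_{0}^{s}}\rightarrow+\infty$ one only has $|\langle\mathcal{J}^{\prime}(u_{j}),u_{j}\rangle|\leqslant\|\mathcal{J}^{\prime}(u_{j})\|\,\|u_{j}\|_{\mathcal{H}_{0}^{s}}=o(1)\,\|u_{j}\|_{\mathcal{H}_{0}^{s}}$, which need not vanish, whereas under the Cerami normalization $(1+\|u_{n}\|_{\mathcal{H}_{0}^{s}})\|\mathcal{J}^{\prime}(u_{n})\|\rightarrow 0$ it does, exactly as you observe. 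Since the quantitative deformation lemma, and hence the Fountain Theorem, is known to remain valid with $(C)_{c}$ in place of $(PS)_{c}$, your variant closes this subtle point while otherwise reproducing the paper's argument; the only minor item to make explicit in a full write-up is the treatment of the endpoint cases $t_{n}\in\{0,1\}$ of the maximizer (if $t_{n}=1$ the boundedness of $\mathcal{J}(u_{n})$ already contradicts $\mathcal{J}(t_{n}u_{n})\rightarrow+\infty$), a point the paper glosses over as well.
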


	\begin{Theorem}\label{infinite positive and negative energy sol theorem}
		Let $s\in(1,2), \,\lambda,\, \mu >0$ and $g$ be a continuous function verifying (H1)-(H4) and (S). Then, the concave-convex problem \eqref{problem2} has both a sequence of weak solutions $\{u_k\}$ such that $\mathcal{J}_{\lambda,\mu}(u_k)\rightarrow +\infty$ and a sequence of solutions $\{v_k\}$ such that  $\mathcal{J}_{\lambda,\mu}(v_k)\rightarrow 0^-$, as $k\rightarrow +\infty$.
	\end{Theorem}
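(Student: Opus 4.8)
The plan is to apply the Fountain Theorem to produce $\{u_k\}$ with $\mathcal{J}_{\lambda,\mu}(u_k)\to+\infty$ and its dual form to produce $\{v_k\}$ with $\mathcal{J}_{\lambda,\mu}(v_k)\to0^-$. Recall that
\[
\mathcal{J}_{\lambda,\mu}(u)=\tfrac12\|u\|^2-\tfrac{\lambda}{p}\int_\Omega|u|^p\,dx-\mu\int_\Omega G(x,u)\,dx,\qquad G(x,t):=\int_0^t g(x,\tau)\,d\tau,
\]
on $\mathcal{H}_0^s$ with norm $\|\cdot\|$. Since $g$ satisfies (S) and $t\mapsto|t|^{p-2}t$ is odd, $f_{\lambda,\mu}$ is odd and $\mathcal{J}_{\lambda,\mu}$ is even; by (H1) with $1<p<2<q<2^*_s$ it is of class $C^1$. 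Fix an orthonormal basis $\{e_j\}$ of $\mathcal{H}_0^s$ and set $Y_k=\mathrm{span}\{e_1,\dots,e_k\}$ and $Z_k=\overline{\mathrm{span}\{e_j:j\geq k\}}$. The one analytic ingredient common to both halves is that the embedding constants $\beta_k:=\sup_{u\in Z_k,\,\|u\|=1}\|u\|_{L^p(\Omega)}$ and $\gamma_k:=\sup_{u\in Z_k,\,\|u\|=1}\|u\|_{L^q(\Omega)}$ tend to $0$ as $k\to\infty$, a consequence of the compactness of $\mathcal{H}_0^s\hookrightarrow L^r(\Omega)$ for $r\in[1,2^*_s)$.

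For the positive-energy solutions I verify the Fountain geometry. On $u\in Z_k$ with $\|u\|=r_k$, (H1) gives
\[
\mathcal{J}_{\lambda,\mu}(u)\geq \tfrac12\|u\|^2-\tfrac{\lambda}{p}\beta_k^p\|u\|^p-C\gamma_k^q\|u\|^q-C\|u\|,
\]
and choosing $r_k\sim\gamma_k^{-q/(q-2)}\to\infty$ forces the right-hand side to $+\infty$, which is hypothesis (A2) (the concave $p$-term is negligible since $p<2$). On the finite-dimensional $Y_k$ all norms are equivalent, so the superquadratic growth provided by (H3) yields $\mathcal{J}_{\lambda,\mu}(u)\to-\infty$ as $\|u\|\to\infty$; taking $\rho_k>r_k$ large gives (A1). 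It remains to check the $(PS)_c$ condition for every $c>0$: the boundedness of a $(PS)_c$ sequence is obtained as in Theorem~\ref{Mountain Pass sol theorem} via (H3)--(H4), the lower-order concave term not affecting the argument at infinity, after which the compact embeddings upgrade weak convergence to strong. The Fountain Theorem then delivers $\{u_k\}$ with $\mathcal{J}_{\lambda,\mu}(u_k)\to+\infty$.

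For the negative-energy solutions I apply the Dual Fountain Theorem, where the concave term shapes the geometry near $0$. On $Y_k$ one has $\|u\|_{L^p}\geq c_k\|u\|$; since $p<2$ while (H2) gives $\int_\Omega G(x,u)=o(\|u\|^2)$ near $0$, the concave term dominates and $\max_{u\in Y_k,\,\|u\|=r_k}\mathcal{J}_{\lambda,\mu}(u)<0$ for small $r_k$, which is (B2). On $Z_k$,
\[
\mathcal{J}_{\lambda,\mu}(u)\geq \tfrac12\|u\|^2-\tfrac{\lambda}{p}\beta_k^p\|u\|^p-\mu\varepsilon\|u\|^2-C_\varepsilon\gamma_k^q\|u\|^q,
\]
and the choice $\rho_k:=(C\beta_k^p)^{1/(2-p)}\to0$, with $C$ large, makes the quadratic term control the concave one and the higher-order convex remainder on $\|u\|=\rho_k$, giving (B1). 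The same estimate on $\{u\in Z_k:\|u\|\leq\rho_k\}$ yields $\mathcal{J}_{\lambda,\mu}(u)\geq-\tfrac{\lambda}{p}\beta_k^p\rho_k^p$, so the infimum $d_k\to0^-$ as $\beta_k,\rho_k\to0$, which is (B3). Finally I verify $(PS)_c^*$ relative to $\{Y_k\}$: for $u_{n_j}\in Y_{n_j}$ with $n_j\to\infty$, $\mathcal{J}_{\lambda,\mu}(u_{n_j})$ bounded and $(\mathcal{J}_{\lambda,\mu}|_{Y_{n_j}})'(u_{n_j})\to0$, I bound $\{u_{n_j}\}$, extract a weak limit, and use the compact embeddings together with the density of $\bigcup_k Y_k$ to obtain strong convergence. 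The Dual Fountain Theorem then yields $\{v_k\}$ with $\mathcal{J}_{\lambda,\mu}(v_k)\to0^-$.

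The main obstacle lies entirely in the compactness conditions, not the geometry. For the Fountain half it is the boundedness of $(PS)_c$ sequences in the absence of (AR): I must use the monotonicity (H4) of $t\mapsto f(x,t)/t$ to bound $\int_\Omega\big(\tfrac12 u f_{\lambda,\mu}(x,u)-F_{\lambda,\mu}(x,u)\big)\,dx$ from below, now additionally absorbing the concave contribution. For the Dual Fountain half it is the verification of $(PS)_c^*$, where the truncation to $Y_{n_j}$ obstructs a direct use of $\mathcal{J}_{\lambda,\mu}'$; the delicate point is that the projection error onto $Y_{n_j}$ vanishes, which again rests on $\beta_k,\gamma_k\to0$ and the density of $\bigcup_k Y_k$ in $\mathcal{H}_0^s$.
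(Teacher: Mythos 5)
Your proposal follows essentially the same route as the paper's proof: the positive-energy sequence $\{u_k\}$ comes from the Fountain Theorem (the paper simply observes that $f_{\lambda,\mu}$ inherits (H1), (H3), (H4), (S) from $g$ and invokes Theorem~\ref{infinitely many sol Theorem} as a corollary, while you re-verify the fountain geometry directly, which is the same computation), and the negative-energy sequence $\{v_k\}$ comes from the Dual Fountain Theorem with the same ingredients --- $\rho_k\sim(\lambda\beta_k^p)^{1/(2-p)}\to 0$ on $Z_k$, domination of the concave term on the finite-dimensional $Y_k$ via norm equivalence and the estimate $|G(x,t)|\leqslant\varepsilon|t|^2+\delta(\varepsilon)|t|^q$, the bound $d_k\geqslant-\frac{\lambda}{p}\beta_k^pR^p\to 0$, and a $(PS)_c^*$ verification adapted from Propositions~\ref{PS bounded sequence} and~\ref{PSc convergent sequence}. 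The only point worth noting is that both you and the paper leave the delicate step at the same level of detail: you flag, without resolving, the absorption of the concave term into the (H4)-based Palais--Smale boundedness argument, while the paper asserts without proof that $f_{\lambda,\mu}$ satisfies (H4), so the proposal matches the paper's proof in both strategy and rigor.
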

	
	As a supplement, when 
	$g(x,u)$ takes the special form $g(x,u)=|u|^{q-2}u$
	, we could choose real parameters $\lambda,\mu$ (not necessarily positive), and prove the following theorem, which could not be covered by the above theorems.
	\begin{Theorem}\label{infinite positive and negative energy sol theorem-2}
	Let $g(x,u)=|u|^{q-2}u$. Assume $s\in (1,2), \, 1<p<2<q<2^{*}_{s}$. Then, we have
	\begin{itemize}
			\item[(a)] for every $\mu>0$, $\lambda\in \mathbb{R}$, the concave-convex problem \eqref{problem2} has a sequence of weak solutions $\{u_k\}$ such that ${\mathcal{J}}_{\lambda,\mu}(u_k)\rightarrow +\infty$ as $k\rightarrow +\infty$.
			\item[(b)] for every $\lambda>0$, $\mu\in \mathbb{R}$, the concave-convex problem \eqref{problem2} has a sequence of weak solutions $\{v_k\}$ such that ${\mathcal{J}}_{\lambda,\mu}(v_k)\rightarrow 0^-$ as $k\rightarrow +\infty$.
		\end{itemize}
	\end{Theorem}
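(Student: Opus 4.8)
The plan is to prove part (a) via the Fountain Theorem and part (b) via its dual form, exploiting that for the pure-power nonlinearity the energy functional
\[
\mathcal{J}_{\lambda,\mu}(u)=\frac12\|u\|^2-\frac{\lambda}{p}\int_\Omega|u|^p\,dx-\frac{\mu}{q}\int_\Omega|u|^q\,dx
\]
is $C^1$ on $\mathcal{H}_0^s$ (by subcriticality $q<2^*_s$) and even, since both power terms are even in $u$; this supplies the required $\mathbb{Z}_2$-symmetry for free. I fix a Hilbert-space splitting $\mathcal{H}_0^s=\overline{\bigoplus_{j}X_j}$ with $\dim X_j<\infty$, and set $Y_k=\bigoplus_{j\le k}X_j$, $Z_k=\overline{\bigoplus_{j\ge k}X_j}$. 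The workhorse throughout is the quantity $\beta_k(r):=\sup\{\|u\|_r:u\in Z_k,\ \|u\|=1\}$, where $\|\cdot\|_r$ denotes the $L^r(\Omega)$-norm; for every fixed $r\in[1,2^*_s)$ one has $\beta_k(r)\to0$ as $k\to\infty$ because the embedding $\mathcal{H}_0^s\hookrightarrow L^r(\Omega)$ is compact, and this is exactly what lets the $Z_k$-estimates close.

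For the compactness input I would verify the Palais--Smale conditions first. Given a $(PS)_c$ sequence $\{u_n\}$, the combination $\mathcal{J}_{\lambda,\mu}(u_n)-\frac1q\langle\mathcal{J}_{\lambda,\mu}'(u_n),u_n\rangle=(\tfrac12-\tfrac1q)\|u_n\|^2-\lambda(\tfrac1p-\tfrac1q)\|u_n\|_p^p$ forces boundedness: when $\lambda\le0$ both terms on the right are nonnegative and dominate, while for $\lambda>0$ the subtracted term is only of order $\|u_n\|^p$ with $p<2$, so the quadratic leading term still wins. Boundedness plus the compact embeddings $\mathcal{H}_0^s\hookrightarrow L^p,L^q$ then upgrades weak to strong convergence in the standard way, giving $(PS)_c$ for part (a); the $(PS)^*_c$ condition needed in part (b) follows from the same boundedness estimate applied on the finite-dimensional pieces $Y_{n_j}$, together with the fact that the projections $P_{n_j}\to I$ strongly.

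For part (a) I would check the Fountain geometry. On the finite-dimensional $Y_k$ all norms are equivalent, so the superlinear term $-\frac{\mu}{q}\|u\|_q^q$ (with $\mu>0$, $q>2$) drives $\mathcal{J}_{\lambda,\mu}\to-\infty$ as $\|u\|\to\infty$, yielding large $\rho_k$ with $\max_{u\in Y_k,\|u\|=\rho_k}\mathcal{J}_{\lambda,\mu}\le0$. On $Z_k$ I bound $\mathcal{J}_{\lambda,\mu}(u)\ge\frac12\|u\|^2-\frac{\mu}{q}\beta_k(q)^q\|u\|^q-\frac{|\lambda|}{p}\beta_k(p)^p\|u\|^p$ and choose $r_k=(\mu\beta_k(q)^q)^{1/(2-q)}\to\infty$; a direct computation shows the first two terms contribute $c\,\beta_k(q)^{2q/(2-q)}\to+\infty$, and since $p<2<q$ the $\lambda$-term is of strictly lower order, so $\inf_{u\in Z_k,\|u\|=r_k}\mathcal{J}_{\lambda,\mu}\to+\infty$. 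The Fountain Theorem then produces an unbounded sequence of critical values.

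For part (b) the roles reverse: now $\lambda>0$, $p<2$, so the concave term dominates near the origin. On $Y_k$, equivalence of norms gives $\mathcal{J}_{\lambda,\mu}(u)\le\frac12\|u\|^2-c_k\|u\|^p+C\|u\|^q<0$ for small $\|u\|=r_k$, hence $b_k<0$. On $Z_k$ I pick $\rho_k=(\tfrac{4\lambda}{p}\beta_k(p)^p)^{1/(2-p)}\to0$; the quadratic then beats the $p$-term with margin $\tfrac14\rho_k^2$, while the $q$-term is $O(\beta_k(q)^q\rho_k^q)=o(\rho_k^2)$, giving $\inf_{u\in Z_k,\|u\|=\rho_k}\mathcal{J}_{\lambda,\mu}\ge0$ for large $k$; and since $\rho_k,\beta_k(p),\beta_k(q)\to0$, the infimum of $\mathcal{J}_{\lambda,\mu}$ over $\{u\in Z_k:\|u\|\le\rho_k\}$ is squeezed between a negative quantity tending to $0$ and $\mathcal{J}_{\lambda,\mu}(0)=0$, so it tends to $0$, and the Dual Fountain Theorem yields negative critical values tending to $0^-$. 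I expect the genuine obstacle to be the $(PS)^*$ verification in part (b): because $\mu$ is allowed to be negative, the $L^q$-term carries no favorable sign, so boundedness and the passage to strong convergence must be extracted purely from the coercive quadratic part and the lower-order control of the power terms, while the projections onto $Y_{n_j}$ must be handled carefully; the Fountain-geometry estimates, by contrast, are routine once the numbers $\beta_k(r)$ and the explicit choices of $r_k,\rho_k$ are in place.
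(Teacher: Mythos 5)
Your proposal is correct and takes essentially the same route as the paper: Fountain Theorem for part (a) and Dual Fountain Theorem for part (b), with the same decisive observation that the combination $\mathcal{J}_{\lambda,\mu}(u)-\frac{1}{q}\langle\mathcal{J}_{\lambda,\mu}'(u),u\rangle$ cancels the $q$-power term exactly, so that Palais--Smale (and $(PS)_c^*$) boundedness survives an arbitrary sign of the parameter, with the remaining $p$-power term controlled because $p<2$. The only presentational difference is that the paper disposes of part (a) by checking that $f_5$ satisfies (H1), (AR), (S) and invoking Theorem~\ref{infinitely many sol Theorem}, and reuses the geometry (B1) from the proof of Theorem~\ref{infinite positive and negative energy sol theorem}, whereas you verify the fountain geometries directly with explicit $r_k,\rho_k$; the substance is identical.
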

 
	Before ending this section, we present an application of our previous results. We introduce the following explicit nonlinearities. For problem~\eqref {problem} or \eqref {problem2}, these nonlinearities ensure the existence of the aforementioned weak solutions, including one positive energy solution, solutions with both positive and negative energy, and infinitely many solutions with positive and/or negative energy.
	\begin{Theorem}\label{Thm: application} 
		Let $s=\frac{3}{2}$. Consider the superlinear problem~\eqref{problem} or the concave-convex problem~\eqref{problem2} driven by the higher order fractional Laplacian $(-\Delta)^{3/2}$. Suppose $n=4$ and $f= \partial_{u}F$. The following statements are true.
		\begin{itemize}
			\item [1.] If $f$ is a continuous sectional function defined by
			\[   \quad\quad\quad f(x,u)= f_1(x,u)= 
				\begin{cases}
					2u\ln (|u|+1)+ \frac{u^3}{|u|(|u|+1)} & |u|\geqslant T_1, ~ -a\leqslant u \leqslant b, \\
					\text{continuous function} & -T_1< u <-a, ~b< u<T_1,
				\end{cases}  \]
	        where $T_1< T_0$ and $0<a<b$. It is easy to check the function $f_1$ does not satisfy either condition (S) or (AR) condition.	        
			However, due to Theorem~\ref{Mountain Pass sol theorem}, there is a Mountain Pass solution of the superlinear problem~\eqref{problem}. \smallskip		
			\item [2.] If $f(x,u)= f_2(x,u)= \lambda|u|^{-1/2}u+ f_1(x,u)$, where $\lambda>0$ small enough, then, due to Theorem~\ref{negative energy sol theorem}, there are both a positive energy solution and a negative energy solution of the concave-convex problem~\eqref{problem2}.
			\smallskip
			\item [3.] If $F(x,u)= F_3(x,u)= u^2 \ln (|u| +1)$, which does not satisfy (AR) condition, then, due to Theorem~\ref{infinitely many sol Theorem}, there are infinitely many positive energy solutions of the superlinear problem~\eqref{problem}.
			\smallskip
			\item [4.] If $F(x,u)= F_4(x,u)= \lambda |u|^{3/2} + u^2 \ln (|u| +1)$, where $\lambda>0$, then, due to Theorem~\ref{infinite positive and negative energy sol theorem},  there are infinitely many solutions with both positive energy and  negative energy of the concave-convex problem~\eqref{problem2}.
			\smallskip
		    \item [5.] Let $f(x,u)= f_{5}(x,u) = \lambda|u|^{p-2}u + \mu|u|^{q-2}u$, where $1<p<2<q<2^{*}_{s}$, then 
		    \begin{itemize}
		    \item [(i)] when $ \mu>0, \lambda\in \mathbb{R}$, due to item (a) of Theorem~\ref{infinite positive and negative energy sol theorem-2}, there are  infinitely many weak solutions with positive energy of the concave-convex problem~\eqref{problem2};
		    \item [(ii)] when $\lambda>0, \mu\in \mathbb{R}$, due to item (b) of Theorem~\ref{infinite positive and negative energy sol theorem-2}, there are  infinitely many weak solutions with negative energy of the concave-convex problem~\eqref{problem2}.
		    \end{itemize}
		\end{itemize}
	\end{Theorem}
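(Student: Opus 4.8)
The proof amounts to checking, item by item, that each explicit nonlinearity satisfies the hypotheses of the theorem it invokes, together with the two non-implications asserted in items 1 and 3. Throughout, $s=\frac32$ and $n=4$, so $2^*_s=\frac{2n}{n-2s}=8$ and every $q\in(2,8)$ is admissible in (H1). The unifying observation is that, on $\{|u|\geqslant T_1\}\cup[-a,b]$, the map $f_1$ is exactly $\partial_u F_3$: indeed
\[
\partial_u\bigl(u^2\ln(|u|+1)\bigr)=2u\ln(|u|+1)+\frac{u|u|}{|u|+1}=2u\ln(|u|+1)+\frac{u^3}{|u|(|u|+1)},
\]
so the nonlinearities in items 1, 3 and 4 share the same behaviour near $0$ and near $\infty$, and I verify the structural conditions only once.

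For items 1 and 3 the plan is to check (H1)--(H4) for this common function. Its leading term as $|t|\to\infty$ is $2t\ln(|t|+1)=o(|t|^{q-1})$ for every $q>2$, so (H1) holds with any $q\in(2,8)$ (the bounded transition intervals being harmless); near $t=0$ one has $\ln(|t|+1)=|t|+o(|t|)$, whence $f\sim 3t|t|$ and $f(x,t)/|t|\to0$, giving (H2); (H3) is immediate from $F_3/t^2=\ln(|t|+1)\to+\infty$; and for (H4) I would compute, for $t>T_0>T_1$, $f(x,t)/t=2\ln(t+1)+\frac{t}{t+1}$, a sum of increasing functions, the oddness of the formula on $\{|t|>T_0\}$ then handling $t<-T_0$. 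The two non-implications are short: (AR) would demand $\zeta\ln(|t|+1)\leqslant 2\ln(|t|+1)+\frac{|t|}{|t|+1}$ for some $\zeta>2$ and all large $|t|$, which is impossible since the right side is $2\ln|t|+O(1)$; and (S) fails for $f_1$ because the interpolation region $[-a,b]$ with $0<a<b$ is asymmetric, whereas $\partial_u F_3$ is genuinely odd, so (S) \emph{does} hold in item 3. Theorems~\ref{Mountain Pass sol theorem} and \ref{infinitely many sol Theorem} then apply.

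Items 2, 4 and 5 require only identifying the concave part and reading off parameters. In item 2, $\lambda|u|^{-1/2}u=\lambda|u|^{p-2}u$ with $p=\frac32\in(1,2)$ and $g=f_1$ satisfies (H1)--(H4), so Theorem~\ref{negative energy sol theorem} gives the two solutions for small $\lambda>0$. In item 4, $\partial_u(\lambda|u|^{3/2})=\frac{3\lambda}{2}|u|^{-1/2}u$ is again a concave term with $p=\frac32$ and positive coefficient, while $\partial_u(u^2\ln(|u|+1))$ satisfies (H1)--(H4) and is odd; thus Theorem~\ref{infinite positive and negative energy sol theorem} applies with $\mu=1>0$. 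In item 5 the nonlinearity is literally the model case of Theorem~\ref{infinite positive and negative energy sol theorem-2}, with $g(x,u)=|u|^{q-2}u$ and $1<p<2<q<8$, so parts (a) and (b) yield (i) and (ii).

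The only genuinely delicate points, and where I would spend the care, are the sharp form of (H4) and the failure of (AR): both turn on the interplay between the logarithm $\ln(|t|+1)$ and the bounded correction $\frac{|t|}{|t|+1}$, and it is precisely this $O(1)$ correction that defeats every $\zeta>2$ in (AR) while still leaving $f(x,t)/t$ monotone. The remainder is routine asymptotic bookkeeping.
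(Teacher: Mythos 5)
Your proposal is correct and follows the same route as the paper: the paper's justification of this theorem consists precisely of invoking Theorems~\ref{Mountain Pass sol theorem}--\ref{infinite positive and negative energy sol theorem-2} after the (unwritten) routine verification of (H1)--(H4), (S), and the failure of (AR), which you carry out explicitly and correctly, including the key identities $\partial_u\bigl(u^2\ln(|u|+1)\bigr)=2u\ln(|u|+1)+\frac{u|u|}{|u|+1}$, the monotonicity of $f/t=2\ln(|t|+1)+\frac{|t|}{|t|+1}$, and $2^*_s=8$ for $s=\frac32$, $n=4$. The only cosmetic slip is calling $[-a,b]$ the interpolation region (it is part of the formula region; the interpolation occurs on $(-T_1,-a)$ and $(b,T_1)$), but your underlying reason for the failure of (S) --- the asymmetry $0<a<b$ --- is exactly the paper's point.
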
	

	The paper is organized as follows. In Section~\ref{sec:preliminary}, we introduce some preliminary facts and the variational framework for the elliptic  problem driven by the higher order fractional Laplacian with Dirichlet boundary condition.
	 In Section~\ref{sec MPS},  for  the superlinear problem~\eqref{problem}, we  obtain a non-trivial weak solution of positive energy using Mountain Pass Theorem; for the concave-convex problem~\eqref{problem2}, besides the Mountain Pass solution, we get  another negative energy solution  by Ekeland's variational principle. 
	 Due to Fountain Theorem and Dual Fountain Theorem, infinitely many solutions  are established under symmetry condition in Section \ref{sec: infinite sol}.	
	
	\section{Preliminaries}\setcounter{equation}{0}	\label{sec:preliminary}
	Let $s = 1+ \sigma$, where $s \in (1,2), n>2s$. Also, let 	 $\Omega$ be an open bounded set in $\mathbb{R}^n$ with $C^{1}$ boundary. One could know from \cite{MR3856149} that
	\begin{equation*}
		\mathcal{H}_{0}^{s}:=\left\{u \in H^{s}\left(\mathbb{R}^{n}\right): u \equiv 0 \text { in } \mathbb{R}^{n} \backslash \Omega \right\}
	\end{equation*}
	equipped with the norm
	\begin{equation}\label{Xnorm}
		\|u\|_{H^s(\mathbb{R}^n)}=\bigg(\|u\|^{2}_{L^2(\mathbb{R}^{n})}+2\|Du\|^{2}_{L^2(\mathbb{R}^{n})}+\int_{\mathbb{R}^{2n}}\frac{|Du(x)-Du(y)|^2}{|x-y|^{n+2\sigma}}\, dxdy\bigg)^{\frac{1}{2}}
	\end{equation}
         is a suitable Hilbert space to build a variational structure admitted by the higher order fractional Laplacian. Note that $\mathcal{H}_{0}^{s}$ is different from the usual Sobolev spaces $H^{s}(\mathbb{R}^{n})$ and $H^{s}(\Omega)$.

   We can directly obtain the following lemma from \cite[Theorem 6.5]{DPV12}.
	\begin{Lemma}\label{Lemma: equivalent norm} Let $u\in \mathcal{H}_{0}^{s},$ then the norm in \eqref{Xnorm} is equivalent to the 
			 following norm:
		\begin{equation}\label{equivalent norm}
			\|u\|_{\mathcal{H}_{0}^{s}} :=\left(\int_{\mathbb{R}^{2n}} \frac{|D u(x)-D u(y)|^{2}}{|x-y|^{n+2\sigma}} d x d y\right)^{\frac{1}{2}}.
		\end{equation}
	\end{Lemma}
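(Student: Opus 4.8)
The plan is to prove the two-sided estimate $c\,\|u\|_{\mathcal{H}_0^s} \le \|u\|_{H^s(\mathbb{R}^n)} \le C\,\|u\|_{\mathcal{H}_0^s}$ for all $u \in \mathcal{H}_0^s$. One direction is immediate: the Gagliardo-type integral in \eqref{equivalent norm} is exactly one of the non-negative summands appearing under the square root in \eqref{Xnorm}, so $\|u\|_{\mathcal{H}_0^s} \le \|u\|_{H^s(\mathbb{R}^n)}$ with constant $1$. Hence all the content lies in bounding the two lower-order terms $\|u\|_{L^2(\mathbb{R}^n)}^2$ and $\|Du\|_{L^2(\mathbb{R}^n)}^2$ by a multiple of $\|u\|_{\mathcal{H}_0^s}^2$; that is, in proving a Poincar\'e-type inequality exploiting that $u$ vanishes outside the bounded set $\Omega$.

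First I would control the gradient term. Since every $u \in \mathcal{H}_0^s$ vanishes outside the bounded set $\Omega$, the field $Du$ is measurable and compactly supported in $\overline{\Omega}$, so Theorem~6.5 of \cite{DPV12}, applied componentwise with exponent $p=2$ and fractional parameter $\sigma \in (0,1)$ and summing over the components, yields
\[
\|Du\|_{L^{2^*_\sigma}(\mathbb{R}^n)}^2 \le C_{n,\sigma} \int_{\mathbb{R}^{2n}} \frac{|Du(x)-Du(y)|^2}{|x-y|^{n+2\sigma}}\,dx\,dy = C_{n,\sigma}\,\|u\|_{\mathcal{H}_0^s}^2,
\]
where $2^*_\sigma := \frac{2n}{n-2\sigma} > 2$ (note $n > 2s > 2\sigma$ forces $\sigma>0$, hence $2^*_\sigma>2$). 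Because $Du$ is supported in the bounded set $\overline{\Omega}$ and $2 < 2^*_\sigma$, H\"older's inequality gives $\|Du\|_{L^2(\mathbb{R}^n)} \le |\Omega|^{\frac{1}{2} - \frac{1}{2^*_\sigma}}\,\|Du\|_{L^{2^*_\sigma}(\mathbb{R}^n)}$, whence $\|Du\|_{L^2(\mathbb{R}^n)} \le C\,\|u\|_{\mathcal{H}_0^s}$.

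Next I would handle the $L^2$ term. Since $s > 1$ we have $\mathcal{H}_0^s \subset H^s(\mathbb{R}^n) \subset H^1(\mathbb{R}^n)$, and as $u \equiv 0$ in $\mathbb{R}^n \setminus \Omega$ with $\partial\Omega$ of class $C^1$, the function $u$ belongs to $H^1_0(\Omega)$. The classical Poincar\'e inequality on the bounded domain $\Omega$ then gives $\|u\|_{L^2(\mathbb{R}^n)} = \|u\|_{L^2(\Omega)} \le C_\Omega\,\|Du\|_{L^2(\Omega)} \le C\,\|u\|_{\mathcal{H}_0^s}$, using the gradient bound just established. Combining the two estimates,
\[
\|u\|_{H^s(\mathbb{R}^n)}^2 = \|u\|_{L^2}^2 + 2\|Du\|_{L^2}^2 + \|u\|_{\mathcal{H}_0^s}^2 \le C\,\|u\|_{\mathcal{H}_0^s}^2,
\]
which is the remaining inequality and finishes the proof.

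The main subtlety I expect is the honest application of the compactly-supported fractional Sobolev inequality of \cite{DPV12} to the vector field $Du$ rather than to a scalar function (which is handled by working component by component and using finite-dimensional equivalence of the Euclidean and $\ell^2$ norms on $\mathbb{R}^n$), together with the justification that membership in $\mathcal{H}_0^s$ plus vanishing outside $\Omega$ genuinely places $u$ in $H^1_0(\Omega)$ so that the classical Poincar\'e inequality is available. Both points are routine given the $C^1$ regularity of $\partial\Omega$, but they are precisely where the hypotheses $n > 2s$, $s \in (1,2)$ and the boundedness of $\Omega$ are used.
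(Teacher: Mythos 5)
Your proposal is correct and takes essentially the same route as the paper, whose entire proof is the citation of \cite[Theorem 6.5]{DPV12}: applying that fractional Sobolev inequality componentwise to the compactly supported field $Du$, then recovering $\|Du\|_{L^2}$ by H\"older on the bounded support and $\|u\|_{L^2}$ by Poincar\'e, is exactly the intended argument, and you have filled in the details accurately. One minor remark: the $C^1$ regularity of $\partial\Omega$ is not actually needed for your Poincar\'e step, since $u$ is supported in $\overline{\Omega}$ and therefore lies in $H_0^1(B_R)$ for any large ball $B_R\supset\overline{\Omega}$ (by mollification), so the classical Poincar\'e inequality on $B_R$ suffices.
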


	Hence, in the present paper, we choose the Hilbert space  $(\mathcal{H}_{0}^{s},\|\cdot\|_{\mathcal{H}_{0}^{s}})$ with scalar product
	\[\langle u, v\rangle_{\mathcal{H}_{0}^{s}}=\int_{\mathbb{R}^{2n}}\frac{(Du(x)-Du(y))(Dv(x)-Dv(y))}{|x-y|^{n+2\sigma}}  d x d y. \]
    
	We say that $u\in \mathcal{H}_{0}^{s}$ is a weak solution of problem \eqref{problem}, if $u$ satisfies
	\begin{equation}\label{ definition of weak problem}
		\int_{\mathbb{R}^{2n}} \frac{(D u(x)-D u(y))(D \varphi(x)-D \varphi(y))}{|x-y|^{n+2\sigma}} d x d y=\int_{\Omega} f(x, u) \varphi d x,
	\end{equation}
	for any $ \varphi \in \mathcal{H}_{0}^{s}$.
	
	The weak solution of problem~\eqref{problem} can be found as a critical point of the functional $\mathcal{J}:\mathcal{H}_{0}^{s}\rightarrow \mathbb{R}$ defined by
	\begin{equation}\label{J}
		\mathcal{J}(u)=\frac{1}{2} \|u\|_{\mathcal{H}_{0}^{s}}^{2}-\int_{\Omega} F(x, u(x)) d x 
	\end{equation}
	where $$ F(x,u)=\int_{0}^{u} f(x,\tau)\, d\tau .$$ \par 
	Indeed, the functional $\mathcal{J}$ is Fr\'echet differentiable in $\mathcal{H}_{0}^{s}$ and 
	\[\left\langle\mathcal{J}^{\prime}(u), \varphi\right\rangle= \langle u, \varphi \rangle_{\mathcal{H}_{0}^{s}}-\int_{\Omega} f(x, u(x)) \varphi(x) d x\]
	for any $\varphi \in \mathcal{H}_{0}^{s}$.\par 
	
	To well investigate the variational properties of the functional $\mathcal{J}$, we need  the following two embedding lemmata of  $\mathcal{H}_{0}^{s}$. 
	\begin{Lemma}\label{Lemma:X is embedded in fractional Sobolev space}
		If $v \in \mathcal{H}_{0}^{s}$, then  
		\[
		\tilde{C}\|v\|_{L^q(\Omega)}\leqslant  \|v\|_{H^{s}(\Omega)} \leqslant  \|v\|_{H^{s}\left(\mathbb{R}^{n}\right)}
		 \leqslant C \|v\|_{\mathcal{H}_0^s}, \quad \text{ for  any } q \in [1, 2_{s}^{*}],
		\]
		where $\tilde{C}$ and $ C$ are some positive constants. Consequently, the space  $\mathcal{H}_{0}^{s}$ is continuously embedded in $L^{q}(\Omega)$, $ q \in[1,2_{s}^{*}] $.
	\end{Lemma}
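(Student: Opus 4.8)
\emph{Proof plan.} The statement is a chain of three inequalities, and the plan is to treat each one separately, reading the chain from right to left in order of increasing substance. The rightmost inequality $\|v\|_{H^{s}(\mathbb{R}^{n})} \leqslant C\|v\|_{\mathcal{H}_0^s}$ is nothing but one direction of the norm equivalence already recorded in Lemma~\ref{Lemma: equivalent norm}, so I would simply invoke it. For completeness I would recall that this direction rests on a fractional Poincar\'e inequality: since every $v \in \mathcal{H}_0^s$ vanishes outside the bounded set $\Omega$, applying Poincar\'e componentwise to $Dv$ controls $\|Dv\|_{L^2(\mathbb{R}^n)}$ by the Gagliardo seminorm $[Dv]_{H^\sigma}=\|v\|_{\mathcal{H}_0^s}$, and a further Poincar\'e step controls $\|v\|_{L^2(\mathbb{R}^n)}$, so that all three terms of \eqref{Xnorm} are dominated by $\|v\|_{\mathcal{H}_0^s}^2$.

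The middle inequality $\|v\|_{H^{s}(\Omega)} \leqslant \|v\|_{H^{s}(\mathbb{R}^{n})}$ is immediate: the $H^s(\Omega)$ norm is built from the same three terms as \eqref{Xnorm}, but with the integrals taken over $\Omega$ and $\Omega\times\Omega$ in place of $\mathbb{R}^n$ and $\mathbb{R}^{2n}$. As every integrand is nonnegative, shrinking the domain of integration can only decrease each term, and the inequality follows at once.

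The substantive part is the leftmost inequality $\tilde{C}\|v\|_{L^q(\Omega)} \leqslant \|v\|_{H^{s}(\Omega)}$, i.e.\ the Sobolev embedding $H^s(\Omega) \hookrightarrow L^q(\Omega)$ for $s=1+\sigma\in(1,2)$ and $n>2s$. Because $\Omega$ has $C^1$ boundary it is an extension domain, so the embedding theorems for $H^\sigma(\Omega)$ and $H^1(\Omega)$ are available, and the plan is an iteration (composition) argument upgrading the integrability of $v$ in two steps. First, from $v\in H^s(\Omega)$ one has $Dv\in H^\sigma(\Omega)$, and the fractional Sobolev embedding (the analogue of \cite[Theorem 6.5]{DPV12} on the extension domain $\Omega$) gives $Dv\in L^{2^{*}_{\sigma}}(\Omega)$ with $2^{*}_{\sigma}=\frac{2n}{n-2\sigma}$; meanwhile $v\in H^1(\Omega)\hookrightarrow L^{2^{*}_{1}}(\Omega)\hookrightarrow L^{2^{*}_{\sigma}}(\Omega)$, the last inclusion holding because $2^{*}_{\sigma}<2^{*}_{1}=\frac{2n}{n-2}$ and $\Omega$ is bounded. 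Hence $v\in W^{1,2^{*}_{\sigma}}(\Omega)$. Second, the classical Sobolev embedding $W^{1,2^{*}_{\sigma}}(\Omega)\hookrightarrow L^{2^{*}_{s}}(\Omega)$ applies, since the condition $2^{*}_{\sigma}<n$ is equivalent to $n>2s$ and the Sobolev conjugate of $2^{*}_{\sigma}$ satisfies the key identity $\big(2^{*}_{\sigma}\big)^{*}=\frac{2n}{n-2s}=2^{*}_{s}$. This yields $v\in L^{2^{*}_{s}}(\Omega)$ with a quantitative bound by $\|v\|_{H^s(\Omega)}$, and for general $q\in[1,2^{*}_{s}]$ the borderline case $q=2^{*}_{s}$ reduces to the others by H\"older's inequality on the bounded set $\Omega$.

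The main obstacle to watch for is precisely this last inequality: the embedding theorems in \cite{DPV12} are stated for the range $s\in(0,1)$, so the higher-order exponent $s\in(1,2)$ is not covered directly and must be reached by composing a first-order Sobolev embedding with a fractional one. The only delicate point is the bookkeeping of intermediate exponents, which is settled once one verifies $\big(2^{*}_{\sigma}\big)^{*}=2^{*}_{s}$; tracking the constants through the two steps then produces the asserted positive constants $\tilde{C}$ and $C$, and the concluding continuous-embedding statement into $L^q(\Omega)$ is an immediate restatement.
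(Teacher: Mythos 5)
Your proof is correct. Note that the paper states Lemma~\ref{Lemma:X is embedded in fractional Sobolev space} without proof, so there is no official argument to compare against; however, your composition strategy --- $Dv\in H^{\sigma}(\Omega)\hookrightarrow L^{2^{*}_{\sigma}}(\Omega)$, hence $v\in W^{1,2^{*}_{\sigma}}(\Omega)$, then the first-order Sobolev embedding together with the exponent identity $\bigl(2^{*}_{\sigma}\bigr)^{*}=2^{*}_{s}$ --- is precisely the route the paper itself takes in proving the compact embedding Lemma~\ref{X10 compact embedded in Lq}, with the continuous endpoint embedding in place of Rellich--Kondrachov (which is why you may include $q=2^{*}_{s}$ while the paper's compact statement must exclude it). One genuine difference: the paper passes from $Dv\in L^{2^{*}_{\sigma}}(\Omega)$ to $v\in W^{1,2^{*}_{\sigma}}(\Omega)$ via the Poincar\'e inequality, which exploits that elements of $\mathcal{H}_{0}^{s}$ vanish outside $\Omega$, whereas you use $v\in H^{1}(\Omega)\hookrightarrow L^{2^{*}_{1}}(\Omega)\subset L^{2^{*}_{\sigma}}(\Omega)$; your variant has the advantage of proving the embedding for all of $H^{s}(\Omega)$ rather than only for functions vanishing on $\mathbb{R}^{n}\setminus\Omega$, which is the appropriate level of generality here since the two leftmost inequalities in the chain involve only $H^{s}(\Omega)$. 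The only blemish is a reversed phrase at the end: it is the cases $q<2^{*}_{s}$ that reduce to the borderline case $q=2^{*}_{s}$ by H\"older's inequality on the bounded set $\Omega$, not the borderline case that reduces to the others (one cannot control the $L^{2^{*}_{s}}$ norm by lower $L^{q}$ norms); your intent is clear and the mathematics is unaffected, but the sentence as written asserts the false direction.
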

    
{	\begin{Lemma}\label{X10 compact embedded in Lq}
		Let $v_{j}$ be a bounded sequence in $\mathcal{H}_{0}^{s}$. Then, there exists $v_{\infty}$  such that, up to a subsequence,
		\[v_{j} \rightarrow v_{\infty} \text { in } L^{q}\left(\mathbb{R}^{n}\right), \text{ as } j \rightarrow+\infty, \text{ for any } q \in\left[1,2^{*}_{s}\right) .\]
	\end{Lemma}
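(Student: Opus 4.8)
The plan is to combine weak compactness in the Hilbert space $\mathcal{H}_0^s$ with the classical Rellich--Kondrachov theorem and a short interpolation argument. First I would observe that, by Lemma~\ref{Lemma: equivalent norm}, the hypothesis that $\{v_j\}$ is bounded in $\mathcal{H}_0^s$ is equivalent to $\{v_j\}$ being bounded in $H^s(\mathbb{R}^n)$, say $\sup_j \|v_j\|_{H^s(\mathbb{R}^n)} \leqslant M$. Since $\mathcal{H}_0^s$ is a Hilbert space, hence reflexive, I can extract a subsequence (not relabelled) with $v_j \rightharpoonup v_\infty$ weakly in $\mathcal{H}_0^s$; as every $v_j$ vanishes outside $\Omega$ and this constraint defines a closed (hence, being convex, weakly closed) subspace, the limit $v_\infty$ also vanishes outside $\Omega$, so $v_\infty \in \mathcal{H}_0^s$.

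The crucial step is to upgrade weak convergence to strong $L^2$ convergence. Here I would exploit that $s \in (1,2)$, so in particular $s>1$; the full norm \eqref{Xnorm} controls $\|v_j\|_{L^2(\mathbb{R}^n)}$ and $\|Dv_j\|_{L^2(\mathbb{R}^n)}$, whence $\{v_j\}$ is bounded in $H^1(\mathbb{R}^n)$. Fixing a ball $B_R \supset \overline{\Omega}$, the functions $v_j$ are supported in $\overline{\Omega}$ and bounded in $H^1(B_R)$, so the classical Rellich--Kondrachov theorem on the bounded domain $B_R$ furnishes a further subsequence converging strongly in $L^2(B_R)$, and therefore in $L^2(\mathbb{R}^n)$ (all functions vanishing outside $\Omega \subset B_R$). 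The strong $L^2$ limit agrees with $v_\infty$, since strong $L^2$ convergence and the continuous embedding $\mathcal{H}_0^s \hookrightarrow L^2$ both identify the same weak $L^2$ limit. Alternatively one may invoke directly the fractional compact embedding of \cite[Theorem 7.1]{DPV12}, but the reduction to $H^1$ is cleaner given $s>1$.

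Finally I would promote the $L^2$ convergence to $L^q$ for every $q \in [1, 2_s^*)$. For $q \in [1,2]$ this is immediate from H\"older's inequality, since $\Omega$ has finite measure. For $q \in (2, 2_s^*)$ I would interpolate: writing $\tfrac1q = \tfrac{\theta}{2} + \tfrac{1-\theta}{2_s^*}$ with $\theta \in (0,1)$, one has
\[\|v_j - v_\infty\|_{L^q(\mathbb{R}^n)} \leqslant \|v_j - v_\infty\|_{L^2(\mathbb{R}^n)}^{\theta}\, \|v_j - v_\infty\|_{L^{2_s^*}(\mathbb{R}^n)}^{1-\theta},\]
where the last factor stays bounded uniformly in $j$ by Lemma~\ref{Lemma:X is embedded in fractional Sobolev space} together with the $\mathcal{H}_0^s$ bound, while the $L^2$ factor tends to $0$. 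This yields $v_j \to v_\infty$ in $L^q(\mathbb{R}^n)$ for all $q \in [1, 2_s^*)$, as claimed.

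The only genuine subtlety --- the main obstacle --- lies in the middle step: one must recognize that $s>1$ forces membership in $H^1$ and that the common bounded support $\overline{\Omega}$ makes the classical Rellich theorem applicable (endpoint compactness at $q = 2_s^*$ of course fails, which is exactly why $q$ must stay strictly below $2_s^*$). Beyond that, the proof is routine; care is needed only to choose the subsequences consistently and to verify that vanishing outside $\Omega$ is preserved under every limit.
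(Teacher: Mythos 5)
Your proof is correct, but it takes a genuinely different route from the paper's. The paper keeps the fractional regularity of the gradient: boundedness in $\mathcal{H}_0^s$ means $\{Dv_j\}$ is bounded in $H^\sigma(\Omega)$, hence in $L^{2^*_\sigma}(\Omega)$ by the fractional Sobolev embedding, so that $\{v_j\}$ is bounded in $W^{1,2^*_\sigma}(\Omega)$ via the Poincar\'e inequality; since $(2^*_\sigma)^* = 2^*_s$ (recall $s=1+\sigma$), a single application of the Rellich--Kondrachov theorem for $W^{1,2^*_\sigma}(\Omega)$ gives compactness in $L^q(\Omega)$ for the whole range $q\in[1,2^*_s)$ at once. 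You instead discard the $H^\sigma$ information on $Dv_j$, retain only the $H^1(\mathbb{R}^n)$ bound, obtain strong convergence in $L^2$ alone from the classical Rellich theorem, and then recover all $q\in[1,2^*_s)$ by interpolating against the uniform $L^{2^*_s}$ bound coming from Lemma~\ref{Lemma:X is embedded in fractional Sobolev space} (so the fractional regularity re-enters your argument only through that continuous embedding). What each buys: the paper's argument reaches the full range of exponents in one compactness step, at the cost of invoking the fractional embedding for $Dv_j$ plus Poincar\'e; yours uses a more elementary compact embedding and, as a bonus, the interpolation automatically yields one subsequence converging in every $L^q$ simultaneously --- a uniformity in $q$ that the paper's phrasing leaves implicit (it would otherwise require a diagonal extraction or exactly your interpolation). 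Your identification of the weak limit with the strong $L^2$ limit, and the H\"older step for $q\in[1,2]$ on the finite-measure set, are both fine.
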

	\begin{proof}
	Let $2^*_\sigma = \frac{2n}{n-2\sigma}$. Since $\{v_j\}$ is bounded in $\mathcal{H}_0^s$,  $\{Dv_{j}\}$ is bounded in $H^{\sigma}(\Omega)$. Via the embedding theorem  \cite{DPV12}, $\{Dv_{j} \}$ is bounded in $L^{2^*_\sigma}(\Omega)$. Consequently, $\{v_j\}$ is bounded in $W^{1,2^*_\sigma}(\Omega)$ due to the Poincar\'e inequality on $\Omega$.
  
  As $2^*_\sigma > 2$ and $\Omega$ is a bounded domain with $C^1$ boundary, we have  the compact embedding  (e.g. in \cite{MR2597943})
  \[W^{1,2^*_\sigma}(\Omega) \hookrightarrow \hookrightarrow L^q(\Omega) \quad \text{ for } q \in [1,2^*_s), \] 
  which provides a subsequence of $\{v_j\}$ (still denoted by $\{v_j\}$) such that
  $v_j \to v_\infty$ strongly in $L^q(\Omega)$. 
  
  Since $v_{j}$ vanishes outside $\Omega$, one can define $v_{\infty}:=0$ in $\mathbb{R}^{n} \setminus \Omega$ and we have $v_\infty\in L^q(\mathbb{R}^n)$.
  Thus, the convergence $v_j \to v_\infty$ is in $L^q(\mathbb{R}^n)$, for any $q \in [1,2^*_s)$. 
	\end{proof}
	}

	We would remark that, if we replace $(-\Delta)^{s}$  with $s \in (1,2)$ by a slightly generalized nonlocal operator
	$$\widetilde{\mathscr{L}}_{K}:=(-\Delta)(\mathscr{L}_{K}) $$
	where the operator $\mathscr{L}_{K}$ and the kernel $K$ are defined as in \cite{SV12}, the associated compact embedding theorem could be proved analogously.
		
\smallskip
In the following, we provide several spectral properties of the higher order fractional Laplacian operator $(-\Delta)^s$, which will be used later to find infinitely many solutions. 
\begin{Lemma}\label{Prop: X0 separable}
	The weak eigenvalue problem associated to $(-\Delta)^s$ for $s\in (1,2)$
  \begin{equation}\label{eigen problem0}
	\left\{\begin{array}{l}
		\langle u, \varphi \rangle_{\mathcal{H}_{0}^{s}}=\lambda \int_{\Omega} u(x) \varphi(x) d x, \quad \forall \varphi \in \mathcal{H}_{0}^{s} \\
		u \in \mathcal{H}_{0}^{s}
	\end{array}\right.
  \end{equation}
possesses a divergent sequence of positive eigenvalues $\left\{\lambda_{k}\right\}_{k \in \mathbb{N}}$ with 
\[0<\lambda_{1}\leqslant\lambda_{2} \leqslant \cdots \leqslant \lambda_{k} \leqslant \lambda_{k+1} \leqslant \cdots\]
whose corresponding eigenfunctions $\{e_{k}\}_{k \in \mathbb{N}}$ can be chosen in such a way that this sequence is an orthonormal basis of $L^{2}(\Omega)$ and an orthogonal basis of $\mathcal{H}_{0}^{s}$. 
\end{Lemma}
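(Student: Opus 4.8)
The plan is to reduce the weak eigenvalue problem \eqref{eigen problem0} to the spectral theory of a compact, self-adjoint, positive operator on $L^2(\Omega)$ and then invoke the classical spectral theorem. First I would introduce a solution operator $T\colon L^2(\Omega)\to\mathcal{H}_0^s$. For given $f\in L^2(\Omega)$, the functional $\varphi\mapsto\int_\Omega f\varphi\,dx$ is bounded on $\mathcal{H}_0^s$, since the continuous embedding of Lemma~\ref{Lemma:X is embedded in fractional Sobolev space} gives $|\int_\Omega f\varphi\,dx|\leqslant\|f\|_{L^2(\Omega)}\|\varphi\|_{L^2(\Omega)}\leqslant C\|f\|_{L^2(\Omega)}\|\varphi\|_{\mathcal{H}_0^s}$. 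By the Riesz representation theorem there is a unique $Tf\in\mathcal{H}_0^s$ with $\langle Tf,\varphi\rangle_{\mathcal{H}_0^s}=\int_\Omega f\varphi\,dx$ for all $\varphi\in\mathcal{H}_0^s$, and $T$ is bounded.

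Composing $T$ with the compact embedding $\iota\colon\mathcal{H}_0^s\hookrightarrow\hookrightarrow L^2(\Omega)$ provided by Lemma~\ref{X10 compact embedded in Lq} (taking $q=2\in[1,2^*_s)$) yields a compact operator $K:=\iota\circ T\colon L^2(\Omega)\to L^2(\Omega)$. Testing the defining identity against the images of $T$ gives, for all $f,g\in L^2(\Omega)$, the chain $\int_\Omega(Kf)g\,dx=\langle Tf,Tg\rangle_{\mathcal{H}_0^s}=\int_\Omega f(Kg)\,dx$, so $K$ is self-adjoint; moreover $\int_\Omega(Kf)f\,dx=\|Tf\|_{\mathcal{H}_0^s}^2\geqslant0$, with equality forcing $Tf=0$ and hence $f=0$, so $K$ is positive and injective.

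By the spectral theorem for compact self-adjoint operators, $K$ admits a nonincreasing sequence of positive eigenvalues $\mu_1\geqslant\mu_2\geqslant\cdots\to0^+$ with eigenfunctions $\{e_k\}$ forming an orthonormal basis of $L^2(\Omega)$; injectivity excludes a kernel, and since $\dim L^2(\Omega)=\infty$ the spectrum is genuinely infinite. Setting $\lambda_k:=1/\mu_k$, the relation $Ke_k=\mu_k e_k$ rewrites as $\langle e_k,\varphi\rangle_{\mathcal{H}_0^s}=\lambda_k\int_\Omega e_k\varphi\,dx$ for every $\varphi$, so each $e_k$ solves \eqref{eigen problem0} with eigenvalue $\lambda_k$, and $0<\lambda_1\leqslant\lambda_2\leqslant\cdots\to+\infty$.

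Finally, testing the eigenrelation for $e_j$ against $\varphi=e_k$ gives $\langle e_j,e_k\rangle_{\mathcal{H}_0^s}=\lambda_j\int_\Omega e_je_k\,dx$, which vanishes for $j\neq k$ by $L^2$-orthonormality (and equals $\lambda_k$ for $j=k$), so $\{e_k\}$ is orthogonal in $\mathcal{H}_0^s$. For totality in $\mathcal{H}_0^s$, suppose $u\in\mathcal{H}_0^s$ satisfies $\langle u,e_k\rangle_{\mathcal{H}_0^s}=0$ for all $k$; then $\lambda_k\int_\Omega u e_k\,dx=0$, so $u\perp e_k$ in $L^2(\Omega)$ for every $k$, and completeness of $\{e_k\}$ in $L^2(\Omega)$ forces $u=0$, whence $\{e_k\}$ is an orthogonal basis of $\mathcal{H}_0^s$. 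The only point demanding care specific to this higher-order setting is that the whole argument rests on the compactness in Lemma~\ref{X10 compact embedded in Lq}; once that embedding is secured, the reasoning is the standard one and the polyharmonic--nonlocal structure of $(-\Delta)^s$ enters only through the definition of the inner product $\langle\cdot,\cdot\rangle_{\mathcal{H}_0^s}$.
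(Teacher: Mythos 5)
Your proof is correct, but it takes a genuinely different route from the paper. The paper (Appendix, Lemma~A.1, following Servadei--Valdinoci) constructs the eigenvalues variationally: it minimizes $\mathcal{F}(u)=\tfrac12\|u\|_{\mathcal{H}_0^s}^2$ over the $L^2$-sphere intersected with the nested orthogonal complements $\mathbb{P}_{k+1}=\{u:\langle u,e_j\rangle_{\mathcal{H}_0^s}=0,\ j\leqslant k\}$, obtains each $\lambda_{k+1}$ as a constrained minimum with a Lagrange-multiplier identity, proves $\lambda_k\to+\infty$ by a compactness/contradiction argument, and then establishes the basis properties by hand (a Bessel-type inequality and Cauchy-sequence argument in $\mathcal{H}_0^s$, plus a density argument for $L^2$). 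You instead invert the problem: Riesz representation gives the solution operator $T$, the compact embedding of Lemma~\ref{X10 compact embedded in Lq} makes $K=\iota\circ T$ compact, self-adjoint and positive on $L^2(\Omega)$, and the Hilbert--Schmidt spectral theorem delivers the eigenvalues, their divergence (via $\mu_k\to0^+$), and $L^2$-completeness all at once; the transfer of orthogonality and totality to $\mathcal{H}_0^s$ is then the short computation you give. Both proofs rest on exactly the same compactness input. What each buys: your argument is shorter and outsources the analysis to a classical theorem, getting the $L^2$ orthonormal basis for free; the paper's construction is more self-contained and yields as a by-product the Courant-type variational characterization $\lambda_{k+1}=\min\{\|u\|_{\mathcal{H}_0^s}^2:u\in\mathbb{P}_{k+1},\ |u|_2=1\}$ together with the fact that this sequence exhausts all eigenvalues, neither of which your route states (though exhaustiveness also follows in your framework, since any weak eigenpair $(\lambda,e)$ gives $Ke=\lambda^{-1}e$). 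One small point to make explicit: in your injectivity step, $Tf=0$ implies $\int_\Omega f\varphi\,dx=0$ for all $\varphi\in\mathcal{H}_0^s$, and concluding $f=0$ uses that $\mathcal{H}_0^s\supset C_c^\infty(\Omega)$ is dense in $L^2(\Omega)$; this deserves one line, since without it $K$ could have a kernel and the eigenfunctions would only span $(\ker K)^\perp$.
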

The proof of Lemma~\ref{Prop: X0 separable} is deferred to Appendix with more details.

In conclusion, we have the following proposition by Lemma~\ref{Prop: X0 separable}. 
\begin{Proposition}
	$\left(\mathcal{H}_{0}^{s},\|\cdot\|_{\mathcal{H}_{0}^{s}}\right)$ is a separable Hilbert space.
\end{Proposition}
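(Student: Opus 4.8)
The plan is to read off separability directly from the spectral decomposition in Lemma~\ref{Prop: X0 separable}, using the standard functional-analytic fact that a Hilbert space admitting a countable orthogonal (equivalently, orthonormal) basis is automatically separable. First I would confirm that $(\mathcal{H}_{0}^{s},\langle\cdot,\cdot\rangle_{\mathcal{H}_{0}^{s}})$ is genuinely a Hilbert space. The bilinear form $\langle\cdot,\cdot\rangle_{\mathcal{H}_{0}^{s}}$ is manifestly symmetric and bilinear, and it is positive definite because, by Lemma~\ref{Lemma: equivalent norm}, the norm $\|\cdot\|_{\mathcal{H}_{0}^{s}}$ is equivalent to the full $H^{s}(\mathbb{R}^{n})$ norm, so $\|u\|_{\mathcal{H}_{0}^{s}}=0$ forces $u=0$. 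Completeness then follows from the same norm equivalence together with the completeness of $H^{s}(\mathbb{R}^{n})$ and the fact that $\mathcal{H}_{0}^{s}$ is a closed subspace: the defining constraint $u\equiv 0$ in $\mathbb{R}^{n}\setminus\Omega$ is preserved under $H^{s}$-convergence.

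Next I would invoke Lemma~\ref{Prop: X0 separable}, which supplies a countable family $\{e_{k}\}_{k\in\mathbb{N}}$ of eigenfunctions forming an orthogonal basis of $\mathcal{H}_{0}^{s}$; after rescaling I may assume the $\{e_{k}\}$ are orthonormal in $\mathcal{H}_{0}^{s}$. To exhibit a countable dense subset explicitly, I would set
\[
\mathcal{D}:=\Big\{\textstyle\sum_{k=1}^{N} q_{k} e_{k} : N\in\mathbb{N},\ q_{k}\in\mathbb{Q}\Big\},
\]
which is countable. Given $u\in\mathcal{H}_{0}^{s}$ and $\varepsilon>0$, the basis property yields a finite truncation $\sum_{k=1}^{N} c_{k} e_{k}$ lying within $\varepsilon/2$ of $u$ in $\|\cdot\|_{\mathcal{H}_{0}^{s}}$, and approximating each real coefficient $c_{k}$ by a rational $q_{k}$ brings the total error below $\varepsilon$. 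Hence $\mathcal{D}$ is dense, and $\mathcal{H}_{0}^{s}$ is separable.

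The main point to note is that essentially all of the substantive work is already packed into Lemma~\ref{Prop: X0 separable}, whose proof is deferred to the appendix; once the countable orthogonal basis is in hand, the present Proposition is immediate. The only step requiring any care — and it is entirely routine — is confirming completeness of $\mathcal{H}_{0}^{s}$, which rests on the norm equivalence of Lemma~\ref{Lemma: equivalent norm} and the closedness of the vanishing-outside-$\Omega$ constraint. I anticipate no genuine obstacle here.
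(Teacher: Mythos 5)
Your proof is correct and takes essentially the same route as the paper: the paper likewise deduces this Proposition directly from Lemma~\ref{Prop: X0 separable}, since a Hilbert space admitting a countable orthogonal basis is separable. Your extra verifications (completeness via Lemma~\ref{Lemma: equivalent norm} and the explicit countable dense set of rational finite linear combinations) simply spell out details the paper leaves implicit.
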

	
	\section{Existence of two solutions without symmetry condition}\label{sec MPS}\setcounter{equation}{0}
	
         \subsection{Mountain Pass solution without (AR) condition}
	In this section, we obtain a positive energy solution to superlinear problem~\eqref{problem} by the Mountain Pass Theorem \cite{AR73}.  
	We start by proving the necessary geometric features of the functional $\mathcal{J}$.
	\begin{Proposition}\label{MP geometry}
		Let $f: \overline{\Omega} \times \mathbb{R} \rightarrow \mathbb{R}$ be a Carath\'eodory function. 
		\begin{itemize}
		\item [$(a)$] If $f$ satisfies (H1) and (H2),  then there exist $\rho>0$ and $\beta>0$ such that for any $u \in \mathcal{H}_{0}^{s}$ with $\|u\|_{\mathcal{H}_{0}^{s}}=\rho$, we have $\mathcal{J}(u) \geqslant \beta$.
		\item [$(b)$] If $f$ satisfies (H3), then there exists $e \in \mathcal{H}_{0}^{s}$ such that $\|e\|_{\mathcal{H}_{0}^{s}}>\rho$ and $\mathcal{J}(e)<\beta$.
		\end{itemize}
	\end{Proposition}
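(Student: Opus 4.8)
The plan is to establish the two mountain–pass geometric features separately, in each case controlling the nonlinear term $\int_\Omega F(x,u)\,dx$ against the quadratic leading part $\tfrac12\|u\|_{\mathcal{H}_0^s}^2$ of $\mathcal{J}$ by means of the growth hypotheses and the embeddings of Lemma~\ref{Lemma:X is embedded in fractional Sobolev space}.

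For part $(a)$, I would first merge (H1) and (H2) into a single pointwise estimate. Condition (H2) yields, for every $\varepsilon>0$, a threshold near the origin on which $|f(x,t)|\le\varepsilon|t|$, while (H1) governs $f$ for large $|t|$; combining them gives $|f(x,t)|\le\varepsilon|t|+C_\varepsilon|t|^{q-1}$ uniformly in $x$, hence $|F(x,t)|\le\tfrac{\varepsilon}{2}|t|^2+\tfrac{C_\varepsilon}{q}|t|^q$ after integration in $t$. Estimating $\int_\Omega F$ from above and applying the continuous embeddings $\mathcal{H}_0^s\hookrightarrow L^2(\Omega)$ and $\mathcal{H}_0^s\hookrightarrow L^q(\Omega)$ (both valid since $2,q\in[1,2^*_s]$) produces $\mathcal{J}(u)\ge\bigl(\tfrac12-\tfrac{\varepsilon C^2}{2}\bigr)\|u\|_{\mathcal{H}_0^s}^2-C'\|u\|_{\mathcal{H}_0^s}^q$. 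Choosing $\varepsilon$ small so the quadratic coefficient is, say, $\tfrac14$, and rewriting the bound as $\|u\|_{\mathcal{H}_0^s}^2\bigl(\tfrac14-C'\|u\|_{\mathcal{H}_0^s}^{q-2}\bigr)$, the fact that $q-2>0$ lets me pick a radius $\rho>0$ small enough that the bracket stays positive on $\|u\|_{\mathcal{H}_0^s}=\rho$; this defines $\beta:=\rho^2\bigl(\tfrac14-C'\rho^{q-2}\bigr)>0$.

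For part $(b)$, I would fix a nonzero $w\in\mathcal{H}_0^s$, normalized so that $\|w\|_{\mathcal{H}_0^s}=1$, and examine the behaviour of $\mathcal{J}(tw)$ as $t\to+\infty$. The key is to promote the superquadratic condition (H3) from an asymptotic statement into a global lower bound $F(x,t)\ge M t^2-C_M$ holding for all $t$ and uniformly in $x$: for $|t|$ large this is immediate from (H3), and near the origin $F$ is bounded (via (H1) and continuity), the bounded contribution being absorbed into $C_M$. Substituting this bound gives $\mathcal{J}(tw)\le\tfrac{t^2}{2}-M t^2\|w\|_{L^2(\Omega)}^2+C_M|\Omega|$, and selecting $M$ so large that $\tfrac12-M\|w\|_{L^2(\Omega)}^2<0$ forces $\mathcal{J}(tw)\to-\infty$. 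Thus for $t$ large enough both $\|tw\|_{\mathcal{H}_0^s}=t>\rho$ and $\mathcal{J}(tw)<\beta$ hold, so we may take $e:=tw$.

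The individual computations are routine; the only step demanding genuine care is the passage in $(b)$ from the pointwise asymptotic (H3) to the \emph{uniform} global bound $F(x,t)\ge M t^2-C_M$. This is precisely where the clause ``uniformly for a.e.\ $x\in\Omega$'' in (H3) is indispensable, since it allows $C_M$ to be chosen independently of $x$; without this uniformity the ensuing integral estimate over $\Omega$ would fail to close, and $\mathcal{J}(tw)$ could not be driven to $-\infty$.
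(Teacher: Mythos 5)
Your proposal is correct and follows essentially the same route as the paper: in part $(a)$ the combined estimate $|F(x,t)|\leqslant \varepsilon|t|^2+C_\varepsilon|t|^q$ together with the embeddings of Lemma~\ref{Lemma:X is embedded in fractional Sobolev space} (the paper merely inserts an extra H\"older step through $L^{2^*_s}$, which is inessential), and in part $(b)$ the promotion of (H3) to the global bound $F(x,t)\geqslant Mt^2-C_M$ followed by evaluating $\mathcal{J}$ along the ray $t\phi$ with $M$ chosen larger than $\tfrac{1}{2}|\phi|_2^{-2}$. Your closing remark about needing a uniform-in-$x$ lower bound on $F$ near the origin is a point the paper glosses over, and your patch is consistent with how the result is actually applied.
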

	\begin{proof}
		$(a).$ (H1) and (H2) imply that,  
		for any $\varepsilon>0$ there exists $\delta=\delta(\varepsilon)$ such that a.e. $x \in \Omega$ and for any $t \in \mathbb{R}$
		\begin{equation}\label{estimate of F}
			|F(x, t)| \leqslant \varepsilon|t|^{2}+\delta(\varepsilon)|t|^{q}.
		\end{equation} 
	    Let $u\in \mathcal{H}_{0}^{s}$. By \eqref{estimate of F} and Lemma \ref{Lemma:X is embedded in fractional Sobolev space}, we get that for any $\varepsilon>0$,
		\begin{equation}\label{7}
			\begin{aligned} \mathcal{J}(u)  =&\frac{1}{2} \|u\|_{\mathcal{H}_{0}^{s}}^{2}-\int_{\Omega} F(x, u(x)) d x
			\geqslant \frac{1}{2} \|u\|_{\mathcal{H}_{0}^{s}}-\varepsilon|u|_2^{2}-\delta(\varepsilon)|u|_q^{q} \\  
			\geqslant &\frac{1}{2} \|u\|_{\mathcal{H}_{0}^{s}}^{2} -\varepsilon|\Omega|^{\left(2^{*}_{s}-2\right) / 2^{*}_{s}}|u|_{2^{*}_{s}}^{2}-|\Omega|^{\left(2^{*}_{s}-q\right) / 2^{*}_{s}} \delta(\varepsilon)|u|_{2^{*}_{s}}^{q}\\
		    \geqslant & \frac{1}{2} \|u\|_{\mathcal{H}_{0}^{s}}^{2}  -\varepsilon |\Omega|^{\left(2^{*}_{s}-2\right) / 2^{*}_{s}}C \|u\|_{\mathcal{H}_{0}^{s}}^{2} -\delta(\varepsilon) |\Omega|^{\left(2^{*}_{s}-q\right) / 2^{*}_{s}}C \|u\|_{\mathcal{H}_{0}^{s}}^{q}\\
		    \geqslant &\left(\frac{1}{2}-\varepsilon C |\Omega|^{\left(2^{*}_{s}-2\right) /2^{*}_{s}}\right) \|u\|_{\mathcal{H}_{0}^{s}}^{2} -\delta(\varepsilon)C |\Omega|^{\left(2^{*}_{s}-q\right) / 2^{*}_{s}}\|u\|_{\mathcal{H}_{0}^{s}}^{q} .\end{aligned}
		\end{equation}
		 Here and in the sequel, we denote by $|\Omega |$ the Lebesgue measure of $\Omega $.\par 
		Choosing $\varepsilon>0$ such that $\frac{1}{2}-\varepsilon C |\Omega|^{\left(2^{*}_{s}-2\right) / 2^{*}_{s}}>0$, it easily follows that
		\[\begin{aligned}
			\mathcal{J}(u) \geqslant \alpha\|u\|_{\mathcal{H}_{0}^{s}}^{2}\left(1-\kappa\|u\|_{\mathcal{H}_{0}^{s}}^{q-2}\right)
		\end{aligned}\]
		for suitable positive constants $\alpha$ and $\kappa$.\par
		Now, let $u \in \mathcal{H}_{0}^{s}$ be such that $\|u\|_{\mathcal{H}_{0}^{s}}=\rho>0$. By assumption $q>2$, one can choose $\rho>0$ sufficiently small such that $1-\kappa \rho^{q-2}>0$, and so
		\[\inf _{u \in \mathcal{H}_{0}^{s},~\|u\|_{\mathcal{H}_{0}^{s}}=\rho} \mathcal{J}(u) \geqslant \alpha \rho^{2}\left(1-\kappa \rho^{q-2}\right)=: \beta>0.\]\par 
		$(b).$ (H3) implies that,  for all $M>0$, there exists $C_{M}>0$ such that
		\begin{equation}\label{H3}
			F(x, t) \geqslant M t^{2}-C_{M}, \quad  \forall  x \in \Omega,  t \in \mathbb{R} .
		\end{equation}\par
		We fix $\phi  \in \mathcal{H}_{0}^{s}$ such that $\|\phi \|_{\mathcal{H}_{0}^{s}}=1$.
		Let $t\in \mathbb{R}$. We have
		\[\begin{aligned}
			\mathcal{J}(t \phi)
			& = \frac{1}{2}\|t\phi\|_{\mathcal{H}_{0}^{s}}^{2}-\int_{\Omega}F(x,t\phi) dx\\
			& \leqslant \frac{t^{2}}{2}-\int_{\Omega} M t^{2} \phi^{2} d x+\int_{\Omega} C_{M} d x 
			& =t^{2}\left(\frac{1}{2}-M |\phi|_2^2\right)+C_{M}|\Omega|.
		\end{aligned}\]\par 
		Let $M= \frac{1}{2|\phi |_2^2}+1$. Passing to the limit as $t \rightarrow +\infty$, we get that $\mathcal{J}(t \phi ) \rightarrow-\infty$, so that the assertion follows by taking $e=T \phi $, with $T>0$ sufficiently large.
	\end{proof}
	We now check the validity of the Palais-Smale condition, i.e., every Palais-Smale sequence of $\mathcal{J}$ has a convergent subsequence in $\mathcal{H}_{0}^{s}$.

	\begin{Proposition}\label{PS bounded sequence}
		Let $f: \overline{\Omega} \times \mathbb{R} \rightarrow \mathbb{R}$ be a Carath\'eodory function satisfying (H3) and (H4). Let $\{u_{j}\}$ be a Palais-Smale sequence of $\mathcal{J}$ in $\mathcal{H}_{0}^{s}$, i.e.,
		\begin{equation}\label{T' convergent to 0}
			\mathcal{J}(u_{j}) \rightarrow c, c\in \mathbb{R} \quad \text{ and } \quad
			\mathcal{J}^{\prime}(u_{j}) \rightarrow 0 \text{ as } j \rightarrow+\infty.
		\end{equation}
	 Then $\{u_j\}$ is bounded in $\mathcal{H}_{0}^{s}$.
	\end{Proposition}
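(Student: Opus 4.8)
The plan is to argue by contradiction: suppose that along (a subsequence of) the Palais--Smale sequence one has $\|u_j\|_{\mathcal{H}_0^s}\to+\infty$, and normalize by setting $v_j:=u_j/\|u_j\|_{\mathcal{H}_0^s}$, so that $\|v_j\|_{\mathcal{H}_0^s}=1$. By Lemma~\ref{X10 compact embedded in Lq} (after passing to a subsequence) there is $v_\infty\in\mathcal{H}_0^s$ with $v_j\rightharpoonup v_\infty$ weakly in $\mathcal{H}_0^s$, $v_j\to v_\infty$ in $L^q(\Omega)$ for every $q\in[1,2_s^*)$, and $v_j\to v_\infty$ a.e. in $\Omega$. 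Dividing $\mathcal{J}(u_j)=\tfrac12\|u_j\|_{\mathcal{H}_0^s}^2-\int_\Omega F(x,u_j)\,dx$ by $\|u_j\|_{\mathcal{H}_0^s}^2$ and using $\mathcal{J}(u_j)\to c$ shows $\|u_j\|_{\mathcal{H}_0^s}^{-2}\int_\Omega F(x,u_j)\,dx\to\tfrac12$. The argument then splits according to whether $v_\infty\equiv 0$ or not.

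First I would dispose of the non-vanishing case $v_\infty\not\equiv 0$. On the set $\{v_\infty\neq 0\}$, which has positive measure, $|u_j(x)|=|v_j(x)|\,\|u_j\|_{\mathcal{H}_0^s}\to+\infty$ a.e., so (H3) yields $F(x,u_j)/u_j^2\to+\infty$ and hence $F(x,u_j)/\|u_j\|_{\mathcal{H}_0^s}^2=(F(x,u_j)/u_j^2)\,v_j^2\to+\infty$ a.e. there. The lower bound \eqref{H3} makes this integrand bounded below (by a constant once $\|u_j\|_{\mathcal{H}_0^s}\ge 1$), so Fatou's lemma forces $\|u_j\|_{\mathcal{H}_0^s}^{-2}\int_\Omega F(x,u_j)\,dx\to+\infty$, contradicting the limit $\tfrac12$ found above. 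This case uses only (H3).

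The hard part is the vanishing case $v_\infty\equiv 0$, which is where (H4) enters through a monotonicity property of $H(x,t):=\tfrac12 t f(x,t)-F(x,t)$. Exploiting that (H4) makes $t\mapsto f(x,t)/t$ monotone for $|t|\ge T_0$, a direct computation shows $H(x,\cdot)$ is nondecreasing in $|t|$ on $\{|t|\ge T_0\}$; combined with the boundedness of $f$ on the compact set $\overline\Omega\times[-T_0,T_0]$ this gives a constant $C_0$ with $H(x,\theta t)\le H(x,t)+C_0$ for all $\theta\in[0,1]$, $t\in\mathbb{R}$, $x\in\overline\Omega$. I would then run the maximization (fibering) trick: choose $t_j\in[0,1]$ with $\mathcal{J}(t_j u_j)=\max_{t\in[0,1]}\mathcal{J}(t u_j)$. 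For each fixed $R>0$, since $Rv_j\to 0$ in $L^2(\Omega)\cap L^q(\Omega)$ and (H1) controls $F$, one gets $\mathcal{J}(t_j u_j)\ge\mathcal{J}(Rv_j)=\tfrac{R^2}{2}-\int_\Omega F(x,Rv_j)\,dx\to\tfrac{R^2}{2}$, whence $\mathcal{J}(t_j u_j)\to+\infty$. Because $\mathcal{J}(0)=0$ and $\mathcal{J}(u_j)$ is bounded, the maximum is interior for large $j$, so $\langle\mathcal{J}'(t_j u_j),t_j u_j\rangle=0$ and therefore $\mathcal{J}(t_j u_j)=\int_\Omega H(x,t_j u_j)\,dx$. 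Applying the monotonicity inequality pointwise with $\theta=t_j$ gives $\mathcal{J}(t_j u_j)\le\int_\Omega H(x,u_j)\,dx+C_0|\Omega|=\mathcal{J}(u_j)-\tfrac12\langle\mathcal{J}'(u_j),u_j\rangle+C_0|\Omega|$, whose right-hand side is controlled along the sequence, contradicting $\mathcal{J}(t_j u_j)\to+\infty$. The two delicate points I expect to wrestle with are establishing the monotonicity inequality for $H$ from (H4) (without assuming differentiability of $f$) and ensuring the comparison term $\langle\mathcal{J}'(u_j),u_j\rangle$ is harmless, where the Palais--Smale control $\|\mathcal{J}'(u_j)\|\to 0$ must be used.
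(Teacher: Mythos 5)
Your proposal is, in all essentials, the paper's own proof: the same contradiction-plus-normalization setup, the same Fatou/(H3) argument forcing the weak limit of $u_j/\|u_j\|_{\mathcal{H}_0^s}$ to vanish, and the same fibering maximization $t_j\in[0,1]$ combined with a monotonicity property of $H$ extracted from (H4) (the paper assumes this property directly as (H4)*, citing Miyagaki--Souto for the implication you sketch, and your integral computation for it is the standard one and needs no differentiability of $f$). Two remarks on the points of difference. First, you are more careful than the paper in one spot: the paper asserts $\left.\frac{d}{dt}\mathcal{J}(tu_j)\right|_{t=t_j}=0$ without checking that $t_j$ is interior, whereas your observation that $\mathcal{J}(t_ju_j)\geqslant\mathcal{J}(Rv_j)\to R^2/2$ for every fixed $R$, hence $\mathcal{J}(t_ju_j)\to+\infty$ while $\mathcal{J}(0)=0$ and $\mathcal{J}(u_j)\to c$, supplies exactly the missing justification (both you and the paper also quietly invoke the growth condition (H1) to get $\int_\Omega F(x,Rv_j)\,dx\to 0$, although the proposition's statement lists only (H3)--(H4)). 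Second, the delicate point you flag at the end is a genuine issue, but it is not resolved by the paper either: since $\|u_j\|_{\mathcal{H}_0^s}\to+\infty$, the Palais--Smale information $\|\mathcal{J}'(u_j)\|\to 0$ only yields $\langle\mathcal{J}'(u_j),u_j\rangle=o\bigl(\|u_j\|_{\mathcal{H}_0^s}\bigr)$, not $o(1)$, yet the paper writes $2\mathcal{J}(u_j)-\langle\mathcal{J}'(u_j),u_j\rangle+C_*|\Omega|\to 2c+C_*|\Omega|$ as if this pairing vanished. The standard repair is to run the entire argument for Cerami sequences, i.e. assuming $\bigl(1+\|u_j\|_{\mathcal{H}_0^s}\bigr)\|\mathcal{J}'(u_j)\|\to 0$, which makes $\langle\mathcal{J}'(u_j),u_j\rangle\to 0$ and costs nothing downstream because the Mountain Pass and Fountain Theorems hold under the Cerami condition; so your proof, like the paper's, is complete modulo this one substitution.
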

	\begin{remark}
	We prove Proposition~\ref{PS bounded sequence}  below by assuming a slightly weaker condition (H4)* than (H4), see \cite[Remark 1.1]{MIYAGAKI08}. \par
		\textbf{(H4)*} Denote $H(x, s)=s f(x, s)-2 F(x, s)$. There exists $C_{*}>0$ such that
		\begin{equation}\label{H4}
			H(x, t) \leqslant H(x, s)+C_{*}
		\end{equation}
		for all $0<t<s$ or $s<t<0$, $\forall x \in \Omega$. \par  
	\end{remark}

	\begin{proof}
		Take any Palais-Smale sequence $\{u_j\}$ of $\mathcal{J}$ in $\mathcal{H}_{0}^{s}$.  
		We suppose, by contradiction, that up to a subsequence, still denoted by $u_{j}$,
		\begin{equation}\label{unbo u}
		\|u_{j}\|_{\mathcal{H}_{0}^{s}}  \rightarrow+\infty \text { as } j \rightarrow+\infty .
		\end{equation}
	
		Set $\omega_{j}:=\frac{u_{j}}{\left\|u_{j}\right\|_{\mathcal{H}_{0}^{s}}}$. Then
		$\|\omega_{j}\|_{\mathcal{H}_{0}^{s}}=1 .$
		Since $\omega_{j}$ is bounded in $\mathcal{H}_{0}^{s}$ and $\mathcal{H}_{0}^{s}$ is a Hilbert space,
		we may assume that there exists $\omega \in \mathcal{H}_{0}^{s}$ such that 
		\begin{align*}
			\omega_{j} &\rightharpoonup \omega, \quad \text{weakly in } \mathcal{H}_{0}^{s}, \\
			\omega_{j} &\rightarrow \omega, \quad \text{strongly in } L^{2}(\Omega), \\
			\omega_{j}(x) &\rightarrow \omega(x), \quad \text{a.e. in } \Omega.
		\end{align*}\par 
		We denote $\Omega^{*}:=\{x \in \Omega, \omega(x) \neq 0\}$. If $\Omega^{*} \neq \emptyset$, then for $x \in \Omega^{*}$, we can deduce that $|u_{j}(x)| \rightarrow+\infty$ as $j \rightarrow+\infty$ from \eqref{unbo u}. By (H3), we have
		\[\lim \limits_{j \rightarrow+\infty} \frac{F(x, u_{j}(x))}{(u_{j}(x))^{2}}(\omega_{j}(x))^{2}=+\infty .\]\par
		The Fatou's Lemma and the definition of $\omega_{j}$ imply
		\[\begin{aligned}
			&\quad \int_{\Omega} \lim \limits_{j \rightarrow+\infty} \frac{F(x, u_{j}(x))}{(u_{j}(x))^{2}}(\omega_{j}(x))^{2} d x =\int_{\Omega} \lim \limits_{j \rightarrow+\infty} \frac{F(x, u_{j}(x))}{(u_{j}(x))^{2}} \frac{(u_{j}(x))^{2}}{\|u_{j}\|_{\mathcal{H}_{0}^{s}}^{2}} d x \\
			& \leqslant \liminf _{j \rightarrow+\infty} \frac{1}{\|u_{j}\|_{\mathcal{H}_{0}^{s}}^{2}} \int_{\Omega} F(x, u_{j}(x)) d x 
			 =\lim \limits_{j \rightarrow \infty} \frac{1}{\|u_{j}\|_{\mathcal{H}_{0}^{s}}^{2}}\left(\frac{1}{2}\|u_{j}\|_{\mathcal{H}_{0}^{s}}^{2}-\mathcal{J}(u_{j})\right)  =\frac{1}{2} .
		\end{aligned}\]\par
		If $\Omega^{*}$ has positive measure, the integration above won't be a finite value. Hence $\Omega^{*}$ has zero measure. Consequently, $\omega(x) \equiv 0$ a.e. in $\Omega$.\par
		Consequently, we take $t_{j} \in[0,1]$ such that
		\[\mathcal{J}(t_{j} u_{j})=\max _{t \in[0,1]} \mathcal{J}_{}(t u_{j}).\]\par 
		Then we can deduce that
		\begin{equation}\label{5}
			\left.\frac{d}{d t} \mathcal{J}(t u_{j})\right|_{t=t_{j}}=t_{j}\|u_{j}\|_{\mathcal{H}_{0}^{s}}^{2}-\int_{\Omega} f(x, t_{j} u_{j}) \cdot u_{j} d x=0 .
		\end{equation}\par 
		Since
		\[\langle\mathcal{J}^{\prime}(t_{j} u_{j}), t_{j} u_{j}\rangle=t_{j}^{2}\|u_{j}\|_{\mathcal{H}_{0}^{s}}^{2}-\int_{\Omega} f(x, t_{j} u_{j}) \cdot t_{j} u_{j} d x,\]
		together with \eqref{5}, it follows that
		\[\langle\mathcal{J}^{\prime}(t_{j} u_{j}), t_{j} u_{j}\rangle=\left.t_{j} \cdot \frac{d}{d t} \mathcal{J}(t u_{j})\right|_{t=t_{j}}=0 .\]\par
		
		Hence, by \eqref{H4}, we obtain
		\begin{equation}\label{6}
			\begin{aligned}
				2 \mathcal{J}(t u_{j}) & \leqslant 2 \mathcal{J}_{}(t_{j} u_{j})-\langle\mathcal{J}^{\prime}(t_{j} u_{j}), t_{j} u_{j}\rangle \\
				& =\int_{\Omega}\left(t_{j} u_{j} \cdot f(x, t_{j} u_{j})-2 F(x, t_{j} u_{j})\right) d x\\
				&\leqslant  \int_{\Omega}\left(u_{j} \cdot f(x, u_{j})-2 F(x, u_{j})+C_{*}\right) d x \\
				&=  2 \mathcal{J}(u_{j})-\langle\mathcal{J}^{\prime}(u_{j}), u_{j}\rangle+C_{*}|\Omega| \rightarrow 2 c+C_{*}|\Omega| .
			\end{aligned}
		\end{equation}\par 
		On the other hand, for all $k>0$,
		\[2 \mathcal{J}_{}(k \omega_{j})=k^{2}-2 \int_{\Omega} F_{}(x, k \omega_{j}) d x=k^{2}+o(1),\quad \text{ as } j\rightarrow+\infty,\]
		which contradicts \eqref{6} for $k$ and $j$ large enough. So every Palais-Smale sequence of $\mathcal{J}$ is bounded in $\mathcal{H}_{0}^{s}$.
	\end{proof}

	\begin{Proposition}\label{PSc convergent sequence}
		Let $f: \overline{\Omega} \times \mathbb{R} \rightarrow \mathbb{R}$ be a continuous function satisfying (H1). Let $u_{j}$ be a bounded sequence in $\mathcal{H}_{0}^{s}$ such that 
		\begin{equation}\label{T' convergent to 0-2}
			\left\| \mathcal{J}^{\prime}(u_{j}) \right\|:= \sup\left\{ | \langle \mathcal{J}^{\prime}(u_{j}),\varphi \rangle |: \varphi \in  \mathcal{H}_0^s, ~\| \varphi \|_{\mathcal{H}_0^s}=1 \right\} \rightarrow 0
			\text{ as } j \rightarrow + \infty. 
		\end{equation} 
	    Then there exists $u_{\infty} \in \mathcal{H}_{0}^{s}$ such that, up to a subsequence, 
	    \[\|u_{j}-u_{\infty}\|_{\mathcal{H}_{0}^{s}} \rightarrow 0\text{ as }j \rightarrow+\infty.\]
	\end{Proposition}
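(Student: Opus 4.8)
The plan is to extract a weakly convergent subsequence and then upgrade weak convergence to strong convergence by playing the Hilbert inner product against the nonlinear term, whose compactness is furnished by the subcritical growth (H1) together with Lemma~\ref{X10 compact embedded in Lq}. Since $\{u_j\}$ is bounded in the Hilbert space $\mathcal{H}_0^s$, after passing to a subsequence (not relabeled) there is $u_\infty\in\mathcal{H}_0^s$ with $u_j\rightharpoonup u_\infty$ weakly in $\mathcal{H}_0^s$. By Lemma~\ref{X10 compact embedded in Lq}, along a further subsequence $u_j\to u_\infty$ strongly in $L^q(\mathbb{R}^n)$ for every $q\in[1,2^*_s)$; in particular this holds for the exponent $q\in(2,2^*_s)$ appearing in (H1).

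The core of the argument is to test $\mathcal{J}'(u_j)$ against $u_j-u_\infty$. Using the explicit formula for $\mathcal{J}'$, I would write
\[
\langle\mathcal{J}'(u_j),u_j-u_\infty\rangle
=\langle u_j,u_j-u_\infty\rangle_{\mathcal{H}_0^s}
-\int_\Omega f(x,u_j)(u_j-u_\infty)\,dx .
\]
The left-hand side tends to $0$, since $\|\mathcal{J}'(u_j)\|\to0$ by \eqref{T' convergent to 0-2} while $\|u_j-u_\infty\|_{\mathcal{H}_0^s}$ stays bounded. Hence it suffices to show that the nonlinear integral vanishes in the limit, for then $\langle u_j,u_j-u_\infty\rangle_{\mathcal{H}_0^s}\to0$; combined with $\langle u_\infty,u_j-u_\infty\rangle_{\mathcal{H}_0^s}\to0$ (which is precisely the weak convergence $u_j\rightharpoonup u_\infty$ tested against the fixed element $u_\infty$), this yields
\[
\|u_j-u_\infty\|_{\mathcal{H}_0^s}^2
=\langle u_j,u_j-u_\infty\rangle_{\mathcal{H}_0^s}
-\langle u_\infty,u_j-u_\infty\rangle_{\mathcal{H}_0^s}\to0,
\]
which is the desired strong convergence.

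The remaining point, which I expect to be the main (though routine) obstacle, is controlling the nonlinear term. Using (H1) and the triangle inequality,
\[
\left|\int_\Omega f(x,u_j)(u_j-u_\infty)\,dx\right|
\leqslant a_1\,|u_j-u_\infty|_1
+a_2\int_\Omega|u_j|^{q-1}|u_j-u_\infty|\,dx .
\]
The first summand tends to zero by the $L^1$ convergence. For the second, I would apply H\"older's inequality with the conjugate exponents $q/(q-1)$ and $q$, obtaining the bound $a_2\,|u_j|_q^{q-1}\,|u_j-u_\infty|_q$; since $\{|u_j|_q\}$ is bounded (by the $L^q$ convergence, or directly by Lemma~\ref{Lemma:X is embedded in fractional Sobolev space}) and $|u_j-u_\infty|_q\to0$, this term also vanishes. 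This closes the argument.

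The one genuinely delicate ingredient is that the strong $L^q$ convergence is available precisely because $q<2^*_s$ is \emph{subcritical}, so that the compact embedding of Lemma~\ref{X10 compact embedded in Lq} applies; at the critical exponent the compactness would fail and this proof would break down. No use of (H2)--(H4) is needed here, consistent with the hypotheses of the statement.
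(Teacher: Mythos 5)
Your proposal is correct, and it takes a genuinely different route to handle the nonlinear term than the paper does. The paper tests $\mathcal{J}'(u_j)$ against $u_j$ and against $u_\infty$ \emph{separately}, and then must identify both limits $\int_\Omega f(x,u_j)u_j\,dx$ and $\int_\Omega f(x,u_j)u_\infty\,dx$ with $\int_\Omega f(x,u_\infty)u_\infty\,dx$; to do this it extracts a further subsequence converging a.e.\ with an $L^q$ dominating function (via \cite[Theorem 4.9]{Brezisbook}) and invokes the Dominated Convergence Theorem, which is exactly where the continuity of $f$ in $t$ enters. It then concludes from $\|u_j\|_{\mathcal{H}_0^s}^2\to\|u_\infty\|_{\mathcal{H}_0^s}^2$ and $\langle u_j,u_\infty\rangle_{\mathcal{H}_0^s}\to\|u_\infty\|_{\mathcal{H}_0^s}^2$. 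You instead test against $u_j-u_\infty$ in one shot and kill the nonlinear integral purely with the growth bound (H1), H\"older's inequality, and the strong $L^1$/$L^q$ convergence from Lemma~\ref{X10 compact embedded in Lq} --- no a.e.\ convergence, no dominating function, no dominated convergence. This is the standard $(S_+)$-type argument; it is shorter, avoids one subsequence extraction, and in fact never uses continuity of $f$ in $t$, so it proves the statement for Carath\'eodory $f$ satisfying (H1) alone, slightly more general than the paper's stated hypothesis. What the paper's route buys in exchange is the explicit identification of the limits of the nonlinear integrals (in particular the identity $\|u_\infty\|_{\mathcal{H}_0^s}^2=\int_\Omega f(x,u_\infty)u_\infty\,dx$, i.e.\ that $u_\infty$ satisfies the natural Nehari-type identity), which your argument does not produce directly, though it is not needed for the conclusion. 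Both proofs share the same skeleton: weak compactness in the Hilbert space, the compact embedding, and the bilinear expansion of $\|u_j-u_\infty\|_{\mathcal{H}_0^s}^2$ at the end.
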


	\begin{proof}
		Since $u_{j}$ is bounded in $\mathcal{H}_{0}^{s}$ and $\mathcal{H}_{0}^{s}$ is a Hilbert space, up to a subsequence, still denoted by $u_{j}$, there exists $u_{\infty} \in \mathcal{H}_{0}^{s}$ such that
		\begin{equation}\label{weak covergence}
			\langle u_{j}, \varphi \rangle_{\mathcal{H}_{0}^{s}} \rightarrow \langle u_{\infty}, \varphi \rangle_{\mathcal{H}_{0}^{s}},
		\text { for any } \varphi \in \mathcal{H}_{0}^{s} \text{ as } j \rightarrow+\infty.
		\end{equation}
		{ Moreover, by Lemma \ref{X10 compact embedded in Lq}, up to a subsequence,
		\begin{equation}
			u_{j} \rightarrow u_{\infty}  \text { in } L^{q}\left(\mathbb{R}^{n}\right), \ \text{ as } j \rightarrow+\infty. \label{Lqconvergence}
		\end{equation}	
		From \cite[Theorem 4.9]{Brezisbook} and \eqref{Lqconvergence}, one can obtain a subsequence $u_{j_i}$ and  $\ell \in L^{q}\left(\mathbb{R}^{n}\right)$ such that
		\[
		\begin{split}
					&u_{j_i} \rightarrow u_{\infty} 
			\text { a.e. in } \mathbb{R}^{n}, \ \text{ as } j \rightarrow+\infty, \\
		& |u_{j_i}(x)| \leqslant \ell(x) \quad \text { a.e. in } \mathbb{R}^{n}, \text { for any } j \in \mathbb{N}.
		\end{split}
		\]
		Without abuse of notation, we still denote the subsequence $u_{j_i}$ by $u_j$ below.}
		
		By (H1), the fact that the map $t \mapsto f(\cdot, t)$ is continuous in $t \in \mathbb{R}$ and the Dominated Convergence Theorem, we get
		\begin{equation}\label{nonlinear j coenvergent}
			\int_{\Omega} f(x, u_{j}(x)) u_{j}(x) d x \rightarrow \int_{\Omega} f\left(x, u_{\infty}(x)\right) u_{\infty}(x) d x, \text{ as } j \rightarrow +\infty,
		\end{equation}
		and
		\begin{equation}\label{nonlinear infinity coenvergent}
			\int_{\Omega} f(x, u_{j}(x)) u_{\infty}(x) d x \rightarrow \int_{\Omega} f\left(x, u_{\infty}(x)\right) u_{\infty}(x) d x, \text{ as } j \rightarrow+\infty.
		\end{equation} 
		
		From \eqref{T' convergent to 0-2}, it follows that 
		\[\langle\mathcal{J}^{\prime}(u_{j}), u_{j}\rangle\rightarrow 0 , \text{ and  } \ \langle\mathcal{J}^{\prime}(u_{j}), u_{\infty}\rangle\rightarrow 0  \quad \text{ as }  j \rightarrow  +\infty. \]
		Therefore,
		\begin{equation}\label{J u 0}
			0 \leftarrow\langle\mathcal{J}^{\prime}(u_{j}), u_{j}\rangle= \|u_{j}\|_{\mathcal{H}_{0}^{s}}^{2}-\int_{\Omega} f(x, u_{j}(x)) u_{j}(x) d x.
		\end{equation}\par 
		By \eqref{nonlinear j coenvergent} and \eqref{J u 0} we deduce that
		\begin{equation}\label{3}
			\|u_{j}\|_{\mathcal{H}_{0}^{s}}^{2} \rightarrow \int_{\Omega} f\left(x, u_{\infty}(x)\right) u_{\infty}(x) d x \text{ as } j \rightarrow+\infty.
		\end{equation}
		 Furthermore,
		\[ 0 \leftarrow\langle\mathcal{J}^{\prime}(u_{j}), u_{\infty}\rangle= \langle u_{j}, u_{\infty}\rangle_{\mathcal{H}_{0}^{s}} -\int_{\Omega} f(x, u_{j}(x)) u_{\infty}(x) d x. \]\par
		So, by \eqref{nonlinear infinity coenvergent} we deduce that
		\begin{equation}\label{1}
		    \langle u_{j}, u_{\infty}\rangle_{\mathcal{H}_{0}^{s}}\rightarrow\int_{\Omega} f\left(x, u_{\infty}\right) u_{\infty}dx \text{ as } j \rightarrow+\infty.
		\end{equation}
		Let $\varphi=u_{\infty}$ in \eqref{weak covergence} and by \eqref{1}, we get
		\begin{equation}\label{2}
			 \|u_{\infty}\|_{\mathcal{H}_{0}^{s}}^{2}=\int_{\Omega} f\left(x, u_{\infty}(x)\right) u_{\infty}(x)dx.
		\end{equation}\par 
		By \eqref{3} and \eqref{2}, we get
		\begin{equation}\label{4}
			\|u_j\|_{\mathcal{H}_{0}^{s}}^2 \rightarrow \|u_{\infty}\|_{\mathcal{H}_{0}^{s}}^2.
		\end{equation}\par 
		Finally, we have that
		\[\begin{aligned}
			\|u_{j}-u_{\infty}\|_{\mathcal{H}_{0}^{s}}^{2}&=\|u_{j}\|_{\mathcal{H}_{0}^{s}}^{2}+\left\|u_{\infty}\right\|_{\mathcal{H}_{0}^{s}}^{2}-2 
			\langle u_{j}, u_{\infty}\rangle_{\mathcal{H}_{0}^{s}} \rightarrow 0 \text{ as } j \rightarrow+\infty,
		\end{aligned}\]
		 thanks to \eqref{weak covergence} and \eqref{4}. 
	\end{proof}
	
	\begin{proof}[\textbf{Proof of Theorem~\ref{Mountain Pass sol theorem}}] Due to Propositions~\ref{MP geometry}-\ref{PSc convergent sequence}, the Mountain Pass Theorem gives that there exists a critical point $u \in \mathcal{H}_{0}^{s}$ of $\mathcal{J}$ which is actually the non-trivial weak solution of \eqref{problem}. Moreover, we have
		\[\mathcal{J}(u) \geqslant \beta>0=\mathcal{J}(0)\]
	where $\beta$ is given in Proposition~\ref{MP geometry}.
	\end{proof}
	
\subsection{Negative energy solution by Ekeland's variational principle}\label{sec a negative energy sol}\setcounter{equation}{0}
	In this section, we consider the concave-convex nonlinear problem
	\begin{equation*}
		\left\{
		\begin{array}{ll}
			(-\Delta)^{s} u=\lambda|u|^{p-2}u+g(x,u) & \text{ in }\Omega, \\
			u=0 &  \text{ in } \mathbb{R}^n\setminus\Omega \\
		\end{array}
		\right.
	\end{equation*}
	where $1<p<2$, $\lambda \geqslant 0$ is a parameter and $g(x,t)$ is a function on $\overline{\Omega}\times\mathbb{R}$. \par 

	\begin{Lemma}\label{negative energy sol lemma}
		Assume that $g: \overline{\Omega} \times \mathbb{R} \rightarrow \mathbb{R}$ is a Carath\'eodory function.
		\begin{itemize}
		\item [$(a)$] If $g$ also satisfies (H3), then $\mathcal{J_{\lambda}}$ is unbounded from below.
		\item [$(b)$] If $g$ also satisfies (H1) and (H2), then for $\lambda>0$ small enough, there exist $\rho, R>0$ such that $\mathcal{J}_{\lambda}(u) \geqslant R$, if $\|u\|_{\mathcal{H}_{0}^{s}}=\rho$.
		\item [$(c)$] If $g\in C(\overline{\Omega} \times \mathbb{R}, \mathbb{R})$ also satisfies (H1), (H3) and (H4), then $\mathcal{J_{\lambda}}$ satisfies the Palais-Smale condition.
		\end{itemize}
	\end{Lemma}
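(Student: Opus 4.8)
The plan is to treat all three parts as perturbations of the superlinear analysis already carried out for $\mathcal{J}$, keeping careful track of the new sublinear contribution. Throughout, write
$$\mathcal{J}_{\lambda}(u)=\frac{1}{2}\|u\|_{\mathcal{H}_{0}^{s}}^{2}-\frac{\lambda}{p}\int_{\Omega}|u|^{p}\,dx-\int_{\Omega}G(x,u)\,dx,\qquad G(x,t):=\int_{0}^{t}g(x,\tau)\,d\tau,$$
so that the only difference from $\mathcal{J}$ in \eqref{J} is the concave term $-\frac{\lambda}{p}\int_{\Omega}|u|^{p}$, which is non-positive for $\lambda\geqslant0$ and, by Lemma~\ref{Lemma:X is embedded in fractional Sobolev space}, obeys $\int_{\Omega}|u|^{p}\leqslant C\|u\|_{\mathcal{H}_{0}^{s}}^{p}$ with $1<p<2$. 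For part $(a)$ I would repeat the argument of Proposition~\ref{MP geometry}$(b)$: condition (H3) on $g$ yields $G(x,t)\geqslant Mt^{2}-C_{M}$ for every $M>0$, so fixing $\phi\in\mathcal{H}_{0}^{s}$ with $\|\phi\|_{\mathcal{H}_{0}^{s}}=1$ gives
$$\mathcal{J}_{\lambda}(t\phi)\leqslant t^{2}\Big(\tfrac12-M|\phi|_{2}^{2}\Big)-\frac{\lambda}{p}t^{p}|\phi|_{p}^{p}+C_{M}|\Omega|.$$
Since the concave term is non-positive and of lower order ($p<2$), choosing $M>\tfrac{1}{2|\phi|_{2}^{2}}$ and letting $t\to+\infty$ forces $\mathcal{J}_{\lambda}(t\phi)\to-\infty$, which is unboundedness from below.

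For part $(b)$ I would mimic Proposition~\ref{MP geometry}$(a)$. Conditions (H1)–(H2) give $|G(x,t)|\leqslant\varepsilon|t|^{2}+\delta(\varepsilon)|t|^{q}$, so by Lemma~\ref{Lemma:X is embedded in fractional Sobolev space} and a suitably small $\varepsilon$ one obtains
$$\mathcal{J}_{\lambda}(u)\geqslant a_{0}\|u\|_{\mathcal{H}_{0}^{s}}^{2}-c_{1}\|u\|_{\mathcal{H}_{0}^{s}}^{q}-\frac{\lambda c_{2}}{p}\|u\|_{\mathcal{H}_{0}^{s}}^{p}$$
with positive constants $a_{0},c_{1},c_{2}$. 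On the sphere $\|u\|_{\mathcal{H}_{0}^{s}}=\rho$ this factors as $\rho^{p}\big(a_{0}\rho^{2-p}-c_{1}\rho^{q-p}-\tfrac{\lambda c_{2}}{p}\big)$. Because $p<2<q$, the map $\rho\mapsto a_{0}\rho^{2-p}-c_{1}\rho^{q-p}$ attains a strictly positive maximum at some $\rho_{0}>0$; fixing $\rho=\rho_{0}$ and then choosing $\lambda^{*}>0$ so small that $\tfrac{\lambda c_{2}}{p}$ stays below half that maximum for all $\lambda\in(0,\lambda^{*})$ produces the required $R>0$. It is precisely here that $\lambda$ must be small, since the concave term now works against positivity.

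For part $(c)$ the Palais–Smale condition splits, as in Section~\ref{sec MPS}, into boundedness of PS sequences and convergence of bounded PS sequences. Boundedness follows the template of Proposition~\ref{PS bounded sequence}: assuming $\|u_{j}\|_{\mathcal{H}_{0}^{s}}\to\infty$ and setting $\omega_{j}=u_{j}/\|u_{j}\|_{\mathcal{H}_{0}^{s}}$, the Fatou step is unaffected because $|u_{j}|_{p}^{p}/\|u_{j}\|_{\mathcal{H}_{0}^{s}}^{2}\leqslant C\|u_{j}\|_{\mathcal{H}_{0}^{s}}^{p-2}\to0$, so (H3) again forces the weak limit $\omega\equiv0$; the maximization device $t_{j}\in[0,1]$ then carries over once one observes that in the identity $2\mathcal{J}_{\lambda}(v)-\langle\mathcal{J}_{\lambda}'(v),v\rangle=\lambda\big(1-\tfrac{2}{p}\big)|v|_{p}^{p}+\int_{\Omega}H(x,v)\,dx$, with $H(x,v):=vg(x,v)-2G(x,v)$, the factor $1-\tfrac{2}{p}<0$ renders the concave contribution non-positive, so it only improves the upper estimate supplied by (H4)$^{*}$. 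Convergence of a bounded PS sequence follows the template of Proposition~\ref{PSc convergent sequence}: by Lemma~\ref{X10 compact embedded in Lq} one has $u_{j}\to u_{\infty}$ in $L^{p}(\Omega)$ and $L^{q}(\Omega)$, and since $p-1<1$ and $q-1<2^{*}_{s}-1$ both nonlinearities $|u|^{p-2}u$ and $g(x,u)$ are subcritical, so dominated convergence yields $\int_{\Omega}\big(\lambda|u_{j}|^{p-2}u_{j}+g(x,u_{j})\big)u_{j}\to\int_{\Omega}\big(\lambda|u_{\infty}|^{p}+g(x,u_{\infty})u_{\infty}\big)$ and likewise tested against $u_{\infty}$; combined with the weak convergence this gives $\|u_{j}\|_{\mathcal{H}_{0}^{s}}^{2}\to\|u_{\infty}\|_{\mathcal{H}_{0}^{s}}^{2}$ and hence strong convergence.

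I expect the main obstacle to be the boundedness step in $(c)$: lacking an Ambrosetti–Rabinowitz condition, one must run the Miyagaki–Souto maximization argument, and the delicate point is to confirm that the sublinear term enters with the favourable (non-positive) sign in the energy identity and is genuinely of lower order in the blow-up analysis, so that neither the Fatou estimate nor the (H4)$^{*}$ comparison is spoiled. The remaining verifications — the subcritical compactness of $|u|^{p-2}u$ and the elementary optimization in $\rho$ for part $(b)$ — are routine.
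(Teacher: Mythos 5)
Your parts $(a)$ and $(b)$ are correct and essentially the paper's own arguments: $(a)$ is the same reduction to Proposition~\ref{MP geometry}$(b)$ (the concave term is non-positive for $\lambda\geqslant 0$, so it only helps), and $(b)$ is the same elementary optimization on a sphere --- the paper minimizes $Q(t)=\lambda Kt^{p-2}+C_qt^{q-2}$ and takes $\rho=t_0(\lambda)$, while you fix a $\lambda$-independent radius maximizing $a_0\rho^{2-p}-c_1\rho^{q-p}$ and then shrink $\lambda$; this is an immaterial variant.

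Part $(c)$ is where you depart from the paper (which simply asserts that $f_\lambda=\lambda|u|^{p-2}u+g$ again satisfies (H1), (H3), (H4) and cites Propositions~\ref{PS bounded sequence} and \ref{PSc convergent sequence}), and it is where your argument has a genuine gap. Your Fatou step and your convergence step are fine, but the claim that the concave term ``only improves the upper estimate supplied by (H4)$^*$'' is wrong, because that term enters the Miyagaki--Souto chain \emph{twice, with opposite signs}. Writing $H_g(x,t):=tg(x,t)-2G(x,t)$, at $v=t_ju_j$ it is indeed non-positive and can be dropped:
\begin{equation*}
2\mathcal{J}_\lambda(t_ju_j)-\langle\mathcal{J}_\lambda'(t_ju_j),t_ju_j\rangle
=\lambda\Bigl(1-\tfrac{2}{p}\Bigr)t_j^p|u_j|_p^p+\int_\Omega H_g(x,t_ju_j)\,dx
\leqslant \int_\Omega H_g(x,u_j)\,dx+C_*|\Omega|.
\end{equation*}
But to close the argument you must express $\int_\Omega H_g(x,u_j)\,dx$ through the Palais--Smale data of $\mathcal{J}_\lambda$, and the same identity at $v=u_j$ gives
\begin{equation*}
\int_\Omega H_g(x,u_j)\,dx
=2\mathcal{J}_\lambda(u_j)-\langle\mathcal{J}_\lambda'(u_j),u_j\rangle
+\lambda\,\tfrac{2-p}{p}\,|u_j|_p^p,
\end{equation*}
where the concave contribution reappears with the unfavourable sign. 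In Proposition~\ref{PS bounded sequence} the corresponding right-hand side tends to $2c$, and comparing with $2\mathcal{J}(k\omega_j)=k^2+o(1)$ for a fixed large $k$ yields the contradiction. Here the right-hand side is $2c+o(\|u_j\|_{\mathcal{H}_0^s})+\lambda\frac{2-p}{p}|u_j|_p^p$, and under the blow-up hypothesis this last term need not be bounded: one only knows $|u_j|_p^p=|\omega_j|_p^p\,\|u_j\|_{\mathcal{H}_0^s}^p=o(\|u_j\|_{\mathcal{H}_0^s}^p)$. So for fixed $k$ no contradiction follows, and letting $k=k_j\to\infty$ does not obviously repair it either, since the lower bound $2\mathcal{J}_\lambda(k\omega_j)\geqslant k^2-2\varepsilon k^2|\omega_j|_2^2-2\delta(\varepsilon)k^q|\omega_j|_q^q-\tfrac{2\lambda}{p}k^p|\omega_j|_p^p$ beats an error of size $o(\|u_j\|_{\mathcal{H}_0^s}^p)$ only if $k_j$ can grow at a rate tied to $|\omega_j|_q^{-1}$, a rate the hypotheses do not supply.

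For what it is worth, the paper's one-line proof hides exactly the same difficulty rather than resolving it: $f_\lambda(x,t)/t=\lambda t^{p-2}+g(x,t)/t$ contains the strictly decreasing summand $\lambda t^{p-2}$, so (H4) does not transfer from $g$ to $f_\lambda$, and $H_{f_\lambda}(x,s)=H_g(x,s)-\lambda\frac{2-p}{p}|s|^p$ can violate \eqref{H4} by unbounded amounts (under (H3)--(H4) alone, $H_g$ need not dominate $|s|^p$ pointwise). So the boundedness of Palais--Smale sequences for $\mathcal{J}_\lambda$ without (AR) --- precisely the point you flagged as the main obstacle --- is not settled by your sign/lower-order observations, and needs a genuinely new ingredient (for instance an (AR)-type assumption on $g$, as in Theorem~\ref{infinite positive and negative energy sol theorem-2}, or an argument controlling $|u_j|_p$ along Palais--Smale sequences).
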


	\begin{proof}
		$(a)$ The proof is similar to the proof of item $(b)$ in Proposition~\ref {MP geometry}.
		
		$(b)$ Combining \eqref{estimate of F} and Lemma \ref{Lemma:X is embedded in fractional Sobolev space}, we have
		\[\begin{aligned}
			\mathcal{J}_{\lambda}(u) & = \frac{1}{2}\|u\|_{\mathcal{H}_{0}^{s}}^{2}-\frac{\lambda}{p}|u |_{p}^{p}-\int_{\Omega}G(x,u)dx 
			 \geqslant \frac{1}{2}\|u\|_{\mathcal{H}_{0}^{s}}^{2}-\frac{\lambda}{p}|u |_{p}^{p}-\varepsilon|u|_{2}^{2}-\delta(\varepsilon)|u|_{q}^{q} \\
			& \geqslant \frac{1}{2}\|u\|_{\mathcal{H}_{0}^{s}}^{2}-\frac{\lambda}{p}|\Omega|^{\left(2_{s}^{*}-p\right) / 2_{s}^{*}}|u|_{2_{s}^{*}}^{p}-\varepsilon|\Omega|^{\left(2_{s}^{*}-2\right) / 2_{s}^{*}}|u|_{2_{s}^{*}}^{2}-\delta(\varepsilon)|\Omega|^{\left(2_{s}^{*}-q\right) / 2_{s}^{*}}|u|_{2_{s}^{*}}^{q} \\
			& \geqslant \frac{1}{2}\|u\|_{\mathcal{H}_{0}^{s}}^{2}-\lambda K||u||_{\mathcal{H}_{0}^{s}}^{p}-\varepsilon C_0||u||_{\mathcal{H}_{0}^{s}}^{2}-C_q(\varepsilon)||u||_{\mathcal{H}_{0}^{s}}^{q} \\
			& =||u||_{\mathcal{H}_{0}^{s}}^{2}\left(A-\lambda K\|u\|_{\mathcal{H}_{0}^{s}}^{p-2}-C_q(\varepsilon)\|u\|_{\mathcal{H}_{0}^{s}}^{q-2}\right)
		\end{aligned}\]
		where $K, C_q, C_0$ are positive constants and $A=\frac{1}{2}-\varepsilon C_0$. Taking $\varepsilon>0$ small enough we get that the constant $A>0$. Let
		\[Q(t)=\lambda K t^{p-2}+C_q t^{q-2} .\] 
		Then
		$Q^{\prime}(t)=\lambda K(p-2) t^{p-3}+C_q(q-2) t^{q-3} .$\par 
		Setting
		$Q^{\prime}(t_{0})=0,$
		we know
		\[t_{0}=\left(\frac{\lambda K(2-p)}{C_q(q-2)}\right)^{\frac{1}{q-p}}.\]\par 
		Since $1<p<2<q<2_{s}^{*}$,  $Q(t)$ has a minimum at $t=t_{0}$. Let
		\[\beta=\frac{K(2-p)}{C_q(q-2)}, \quad \bar{p}=\frac{p-2}{q-p}, \quad \bar{q}=\frac{q-2}{q-p} .\]\par 
		Substituting $t_{0}$ in $Q(t)$ we have
		\[Q\left(t_{0}\right)<A, \text{ for } 0<\lambda<\lambda^{*},\]
		where $\lambda^{*}=\left(\frac{A}{K \beta^{\bar{p}}+C_q \beta^{\bar{q}}}\right)^{1 / \bar{q}}$. Taking $\rho=t_{0}$ and $R=A-Q(t_{0})$, we prove part $(b)$.\par 
		
		$(c)$ Since $f_{\lambda}(x, u)=\lambda|u|^{p-2} u+g(x, u)$, $1<p<2$ and $g$ satisfies (H1), (H3) and (H4), we know $f_{\lambda}$ satisfies (H1), (H3) and (H4). 
		By Proposition~\ref{PS bounded sequence}, every Palais-Smale sequence of $\mathcal{J}_{\lambda}$ is bounded in $\mathcal{H}_{0}^{s}$ and $(c)$ follows immediately from Proposition~\ref{PSc convergent sequence}. 
	\end{proof}
	\begin{proof}[\textbf{Proof of Theorem~\ref{negative energy sol theorem}}] 
		Using Lemma~\ref{negative energy sol lemma}, the existence of a positive energy solution $u$ follows by an analogous argument as in the proof of Theorem \ref{Mountain Pass sol theorem}. To obtain another negative energy solution $v$, we consider
		\[\overline{B}(\rho)=\left\{u \in \mathcal{H}_{0}^{s},\|u\|_{\mathcal{H}_{0}^{s}} \leqslant \rho\right\}, \quad \partial B(\rho)=\left\{u \in \mathcal{H}_{0}^{s},\|u\|_{\mathcal{H}_{0}^{s}}=\rho\right\} ,\]
		where $\rho$ is given in Lemma \ref{negative energy sol lemma}.
		Then $\overline{B}(\rho)$ is a complete metric space with the distance
		\[\operatorname{dist}(u, v)=\|u-v\|_{\mathcal{H}_{0}^{s}}, \quad \forall u, v \in \overline{B}(\rho) .\]\par 
		By Lemma \ref{negative energy sol lemma}, we know for $0<\lambda<\lambda^{*}$,
		$\left.\mathcal{J}_{\lambda}(u)\right|_{\partial B(\rho)} \geqslant R>0 .$\par
		Moreover, it is easy to see that $\mathcal{J}_{\lambda} \in C^{1}(\overline{B}(\rho), \mathbb{R})$, hence $\mathcal{J}_{\lambda}$ is lower semi-continuous and bounded from below on $\overline{B}(\rho)$. Let
		$c_{1}=\inf_ {u \in \bar{B}(\rho)}\mathcal{J}_{\lambda}(u).$\par
		Taking $\bar{v} \in C_{c}^{\infty}(\Omega)$. From (H2), we know that for any $\varepsilon>0$, there exists $T>0$ such that for $0<t<T,\left|G\left(x, t \bar{v}\right)\right| \leqslant \varepsilon |t\bar{v}|^{2}$. Then, for both $t, \varepsilon>0 $ small enough,
		\[\begin{aligned}
			\mathcal{J}_{\lambda}\left(t \bar{v}\right) & =\frac{t^{2}}{2}\left\|\bar{v}\right\|_{\mathcal{H}_{0}^{s}}^{2}-\frac{\lambda t^{p}}{p} |\bar{v}|_{p}^{p} d x-\int_{\Omega} G\left(x, t \bar{v}\right) d x \\
			& \leqslant \frac{t^{2}}{2}\left\|\bar{v}\right\|_{\mathcal{H}_{0}^{s}}^{2}-\frac{\lambda t^{p}}{p} |\bar{v}|_{p}^{p} d x+\varepsilon t^{2}|\bar{v}|^{2}_{2}  <0
		\end{aligned}\]
	        since $1<p<2$. Hence, $c_{1}<0$.\par 
		By Ekeland's variational principle, there exists a sequence $\{v_{k}\} $ in $\overline{B}(\rho)$ such that
	$ \mathcal{J}_{\lambda}(v_{k}) \rightarrow c_{1} \text{ and } \nabla \mathcal{J}_{\lambda}(v_{k}) \rightarrow 0$ as $ k \rightarrow\infty.$
		
		It is easy to see  $\{v_{k}\}$ is a Palais-Smale sequence of $\mathcal{J}_{\lambda}$ in $\mathcal{H}_{0}^{s}$. Due to Lemma~\ref{negative energy sol lemma}-$(c)$, there exists $v\in \mathcal{H}_{0}^{s}$ such that $\mathcal{J}_{\lambda}(v)=c_{1}<0$ and $\nabla \mathcal{J}_{\lambda}(v)=0.$ Therefore, $v$ is a weak solution of the concave-convex problem \eqref{problem2} and $\mathcal{J}_{\lambda}(v)<0$.
	\end{proof}
\section{Infinitely many solutions under symmetry condition}\label{sec: infinite sol}\setcounter{equation}{0}
     In this section, we give the proofs of the existence of infinitely many solutions to both the superlinear problem~\eqref{problem} and the concave-convex problem~\eqref{problem2}. The strategy we take is to apply the Fountain Theorem \cite{Bartsch93} and Dual Fountain Theorem \cite{BW95} respectively to the functional $\mathcal{J}$ and $\mathcal{J_{\lambda, \mu}}$.
     
     We first introduce some notations. For any $k \in \mathbb{N}$, we define
     \[Y_{k}:=\operatorname{span}\left\{e_{1}, \cdots, e_{k}\right\},\quad 
     Z_{k}:=\overline{\operatorname{span}\left\{e_{k}, e_{k+1}, \cdots\right\}} \]
     where $\left\{e_k\right\}_{k\in \mathbb{N}}$ are defined in Lemma~\ref{Prop: X0 separable}.
     
     Since $Y_{k}$ is finite-dimensional, all norms on $Y_{k}$ are equivalent. Therefore, there exist two positive constants $C_{k, q}$ and $\tilde{C}_{k, q}$, depending on $k$ and $q$, such that 
     \begin{equation}\label{finite dim eq norm}
     	C_{k, q}\|u\|_{\mathcal{H}_{0}^{s}} \leqslant\|u\|_{L^q(\Omega)} \leqslant \tilde{C}_{k, q}\|u\|_{\mathcal{H}_{0}^{s}} \text{ for any } u \in Y_{k}.
     \end{equation}

    By a modification of \cite[Lemma 3.8]{Willembook}, we have the following lemma.
     \begin{Lemma}\label{Lamma Fountain}
     	Let $1 \leqslant q<2_{s}^{*}$ and, for any $k \in \mathbb{N}$, let
     	\[\beta_{k}:=\sup \left\{\|u\|_{L^q(\Omega)}: u \in Z_{k},\|u\|_{\mathcal{H}_{0}^{s}}=1\right\} .\]
     	Then, $\beta_{k} \rightarrow 0$ as $k \rightarrow \infty$.
     \end{Lemma}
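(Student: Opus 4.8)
The plan is to follow the classical argument of \cite[Lemma 3.8]{Willembook}, adapted to the present setting, combining monotonicity, weak compactness in the Hilbert space $\mathcal{H}_{0}^{s}$, and the compact embedding of Lemma~\ref{X10 compact embedded in Lq}. First I would record two elementary facts about $\{\beta_{k}\}$. By Lemma~\ref{Lemma:X is embedded in fractional Sobolev space} the embedding $\mathcal{H}_{0}^{s}\hookrightarrow L^{q}(\Omega)$ is continuous for every $q\in[1,2_{s}^{*}]$, so each $\beta_{k}$ is finite and non-negative. Moreover, since $Z_{k+1}\subset Z_{k}$, the supremum defining $\beta_{k+1}$ is taken over a smaller set, whence $\beta_{k+1}\leqslant\beta_{k}$. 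Thus $\{\beta_{k}\}$ is non-increasing and bounded below, so it converges to some $\beta\geqslant 0$, and the whole content of the lemma reduces to showing $\beta=0$.

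Next I would extract near-maximizers: for each $k$ choose $u_{k}\in Z_{k}$ with $\|u_{k}\|_{\mathcal{H}_{0}^{s}}=1$ and $\|u_{k}\|_{L^{q}(\Omega)}\geqslant\beta_{k}-\tfrac{1}{k}$ (the supremum need not be attained, so this approximation is essential). The sequence $\{u_{k}\}$ is bounded in the Hilbert space $\mathcal{H}_{0}^{s}$, so up to a subsequence it converges weakly, $u_{k}\rightharpoonup u$ in $\mathcal{H}_{0}^{s}$. The key step is to identify the weak limit as $u=0$. Here I would use the orthogonal basis $\{e_{m}\}$ of $\mathcal{H}_{0}^{s}$ furnished by Lemma~\ref{Prop: X0 separable}: because $u_{k}\in Z_{k}=\overline{\operatorname{span}\{e_{k},e_{k+1},\dots\}}$ and the $e_{m}$ are mutually orthogonal in $\mathcal{H}_{0}^{s}$, one has $\langle u_{k},e_{m}\rangle_{\mathcal{H}_{0}^{s}}=0$ whenever $k>m$. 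Passing to the weak limit gives $\langle u,e_{m}\rangle_{\mathcal{H}_{0}^{s}}=0$ for every fixed $m$, and since $\{e_{m}\}$ is a basis of $\mathcal{H}_{0}^{s}$ this forces $u=0$.

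Finally I would upgrade the weak convergence to strong $L^{q}$ convergence. Since $\{u_{k}\}$ is bounded in $\mathcal{H}_{0}^{s}$ and $q<2_{s}^{*}$, Lemma~\ref{X10 compact embedded in Lq} yields, along a further subsequence, strong convergence $u_{k}\to w$ in $L^{q}(\mathbb{R}^{n})$; the strong $L^{q}$ limit must coincide with the weak limit $u=0$, so $\|u_{k}\|_{L^{q}(\Omega)}\to 0$. Combining this with $\beta_{k}-\tfrac{1}{k}\leqslant\|u_{k}\|_{L^{q}(\Omega)}$ gives $\beta\leqslant 0$, hence $\beta=0$ and $\beta_{k}\to 0$, as claimed. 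The only delicate point is the identification of the weak limit, i.e. that along $Z_{k}$ the mass \emph{escapes to infinity in the basis expansion}; this is precisely where the orthogonal basis structure of Lemma~\ref{Prop: X0 separable} and the compactness of Lemma~\ref{X10 compact embedded in Lq} are indispensable, whereas the remaining steps are routine.
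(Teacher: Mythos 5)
Your proof is correct and takes essentially the same approach as the paper: the paper establishes Lemma~\ref{Lamma Fountain} only by invoking \cite[Lemma 3.8]{Willembook}, and your chain of steps --- monotonicity of $\beta_{k}$, near-maximizers $u_{k}\in Z_{k}$, identification of the weak limit as $0$ via the orthogonal basis of Lemma~\ref{Prop: X0 separable}, and strong $L^{q}$ convergence via the compact embedding of Lemma~\ref{X10 compact embedded in Lq} --- is precisely that classical argument carried out in $\mathcal{H}_{0}^{s}$. No gaps; the delicate points (the supremum need not be attained, and orthogonality passes through the closure defining $Z_{k}$) are handled correctly.
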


	\subsection{Positive energy solutions of superlinear problems} In this subsection we give the proof of Theorem~\ref{infinitely many sol Theorem}. 
	We remark that Theorem~\ref{infinitely many sol Theorem} still holds under the usual assumptions (H1), (AR) and (S).\par 
	
	As is well known, Fountain Theorem provides the existence of an unbounded sequence of critical values for a $C^1$ invariant functional. To use Fountain Theorem to seek critical points, the space $\mathcal{H}_{0}^{s}$ needs to satisfy the following condition. 
	\begin{itemize}
		\item [\textbf{(F1)}] the compact group $G$ acts isometrically on the Banach space $\mathcal{H}_{0}^{s}= \overline{\oplus_{j \in \mathbb{N}} X_{j}}$, the spaces $X_{j}$ are invariant and there exists a finite dimensional space $V$ such that, for every $j \in \mathbb{N}, X_{j} \simeq V$ and the action of $G$ on $V$ is admissible.
	\end{itemize}

    In fact, the space $\mathcal{H}_{0}^{s}$ does indeed meet the above condition by choosing $G:=\mathbb{Z}/2=\{1,-1\}$ as the action group on $\mathcal{H}_{0}^{s}$, $X_{j}:=\mathbb{R}e_{j}$ where $\{e_{j}\}$ are defined as eigenfunctions in Lemma~ \ref{Prop: X0 separable} and $V:=\mathbb{R}$.
    While, by (S), $\mathcal{J} \in C^1(\mathcal{H}_0^s, \mathbb{R})$ is an invariant functional for any action $g\in G$. 
    
    Just as in the general case, when using Fountain Theorem, we need the functional $\mathcal{J}$ to satisfy some geometric structures and compactness condition:
    \begin{itemize}
	    \item [\textbf{(F2)}] for every $k\in \mathbb{N}$, there exists $\rho_{k}>\gamma_{k}>0$ such that
	       \begin{itemize}
	       	\item [(i)] $a_{k}:=\max \left\{\mathcal{J}(u): u \in Y_{k},\|u\|_{\mathcal{H}_{0}^{s}}=\rho_{k}\right\} \leqslant 0$,
	       	\item [(ii)]  $b_{k}:=\inf \left\{\mathcal{J}(u): u \in Z_{k},\|u\|_{\mathcal{H}_{0}^{s}}=\gamma_{k}\right\} \rightarrow +\infty$ as $k \rightarrow +\infty$;
	       \end{itemize}
	    \item [\textbf{(F3)}] $\mathcal{J}$ satisfies the $(PS)_{c}$ condition for every $c>0$.
	\end{itemize}

As shown in Proposition~\ref{PS bounded sequence} and \ref{PSc convergent sequence}, (F3) is easy to obtain. Now we turn to prove that the functional $\mathcal{J}$ does indeed have the above geometric structure (F2).

	\begin{proof}[\bf {Proof of Theorem~\ref{infinitely many sol Theorem}}]
		Based on the above discussion, we already have the spatial conditions of $\mathcal{H}_{0}^{s}$ and compactness condition of the functional $\mathcal{J}$ required to utilize the Fountain Theorem.
		 
		As for the geometric feature (F2) of $\mathcal{J}$, we verify the assumption (ii) firstly. For this, 
		we just need to prove that, for every $k \in \mathbb{N}$, there exists $\gamma_{k}>0$ such that for any $u \in Z_{k}$ with $\|u\|_{\mathcal{H}_{0}^{s}}=\gamma_{k}$, we have $\mathcal{J}(u) \rightarrow +\infty$.\par 
		By (H1), we get that there exists a constant $C>0$ such that
		$|F(x, u)| \leqslant C\left(1+|u|^{q}\right)$ 
		for any $x \in \overline{\Omega}$ and $u \in \mathbb{R}$. 

		Then, for any $u \in Z_{k} \backslash\{0\}$, we obtain
		\[\begin{aligned}
			\mathcal{J}(u) &=\frac{1}{2}\|u\|^{2}_{\mathcal{H}_{0}^{s}} -\int_{\Omega}F(x,u)dx
			 \geqslant \frac{1}{2}\|u\|_{\mathcal{H}_{0}^{s}}^{2}-C|u|_{q}^{q}-C|\Omega| \\
			& =\frac{1}{2}\|u\|_{\mathcal{H}_{0}^{s}}^{2}-C\left|\frac{u}{\|u\|_{\mathcal{H}_{0}^{s}}}\right|_{q}^{q}\|u\|_{\mathcal{H}_{0}^{s}}^{q}-C|\Omega| \\
			& \geqslant \frac{1}{2}\|u\|_{\mathcal{H}_{0}^{s}}^{2}-C \beta_{k}^{q}\|u\|_{\mathcal{H}_{0}^{s}}^{q}-C|\Omega| 
			 =\|u\|_{\mathcal{H}_{0}^{s}}^{2}\left(\frac{1}{2}-C \beta_{k}^{q}\|u\|_{\mathcal{H}_{0}^{s}}^{q-2}\right)-C|\Omega|
		\end{aligned}\]
		where $\beta_{k}$ is defined as in Lemma \ref{Lamma Fountain} .
		
		 Choosing
		$\gamma_{k}=\left(q C \beta_{k}^{q}\right)^{-1 /(q-2)},$
		it is easy to see that $\gamma_{k} \rightarrow+\infty$ as $k \rightarrow+\infty$, thanks to Lemma \ref{Lamma Fountain} and the fact that $q>2$. As a consequence, we get that for any $u \in Z_{k}$ with $\|u\|_{\mathcal{H}_{0}^{s}}=\gamma_{k}$,
		\[\mathcal{J}(u) \geqslant\|u\|_{\mathcal{H}_{0}^{s}}^{2}\left(1/2-C \beta_{k}^{q}\|u\|_{\mathcal{H}_{0}^{s}}^{q-2}\right)-C|\Omega|=(1/2-1/q) \gamma_{k}^{2}-C|\Omega| \rightarrow+\infty \text{ as } k \rightarrow+\infty.\]
		
	 It remains to verify the assumption (i). For this, we just need to prove that there exists $\rho_{k}>0$ such that for any $u \in Y_{k}$ with $\|u\|_{\mathcal{H}_{0}^{s}}=\rho_{k}$, we have $\mathcal{J}(u) \leqslant0$.\par 
	 By (H3), we have 
	 \[F(x,t)\geqslant \frac{1}{C_{k,2}^{2}} t^{2}-B_{k,2}\]
	 where $C_{k,2}$ is a positive constant given in \eqref{finite dim eq norm} with $q=2$, and $B_{k,2}>0$ is a constant related to $C_{k,2}$. \par 
	 Then, for any $u\in Y_{k}$
	 \[\begin{aligned}
	 	\mathcal{J}(u) &=\frac{1}{2}\|u\|^{2}_{\mathcal{H}_{0}^{s}} -\int_{\Omega}F(x,u)dx
	 	\leqslant \frac{1}{2}\|u\|_{\mathcal{H}_{0}^{s}}^{2}-\frac{1}{C_{k,2}^{2}}|u|_{2}^{2}+B_{k,2} |\Omega| \\
	 	& \leqslant \frac{1}{2}\|u\|_{\mathcal{H}_{0}^{s}}^{2}-\|u\|_{\mathcal{H}_{0}^{s}}^{2}+B_{k,2}|\Omega|
	 	=-\frac{1}{2}\|u\|_{\mathcal{H}_{0}^{s}}^{2}+B_{k,2}|\Omega|.
	 \end{aligned}\]\par 
	 Let $\|u\|_{\mathcal{H}_{0}^{s}}=\rho_{k}>\gamma_{k}>0$ large enough. We get that $\mathcal{J}(u) \leqslant 0$.\par 
	In conclusion, we can deduce that  $\mathcal{J}$ has infinitely many critical points $\{u_{j}\}_{j\in \mathbb{N}}$ and $\mathcal{J}(u_{j}) \rightarrow +\infty$ as $j\rightarrow +\infty$. 		
	\end{proof}

	\subsection{Positive and negative energy solutions of concave-convex problems} In this subsection we consider the problem
	\begin{equation*}
		\left\{
		\begin{array}{ll}
			(-\Delta)^{s} u=\lambda|u|^{p-2}u+ \mu g(x,u) & \text{ in }\Omega,\\
			u=0 &  \text{in } \mathbb{R}^n \setminus \Omega 
		\end{array}
		\right.
	\end{equation*}
	where $1<p<2$, $\lambda,\mu >0$ and $g(x,t)$ is a continuous function satisfying (H1)-(H4) and (S). We obtain infinitely many solutions with both positive energy and negative energy.\par

	The infinitely many positive energy solutions can be found by an analogous argument as Theorem~\ref{infinitely many sol Theorem}. To seek infinitely many negative energy solutions, we need Dual Fountain Theorem (see \cite[Theorem 3.18]{Willembook}). Noticing that $\mathcal{H}_{0}^{s}$ satisfies (F1) and $\mathcal{J}_{\lambda,\mu} \in C^{1}(\mathcal{H}_0^s, \mathbb{R})$ is invariant, we just need to verify the geometric condition (B1)  and compactness condition (B2): for every $k \geqslant k_{0}$,
	\begin{itemize}
		\item [\textbf{(B1)}]  there exists $\rho_{k}>\gamma_{k}>0$ such that
		    \begin{itemize}
		    	\item [(i)] $ b_{k}:=\max \left\{\mathcal{J}_{\lambda,\mu}(u):u \in Y_{k}, \|u\|_{\mathcal{H}_{0}^{s}}=\gamma_{k}\right\} <0$, 
		    	\item [(ii)] $ a_{k}:= \inf\left\{\mathcal{J}_{\lambda,\mu}(u):u \in Z_{k}, \|u\|_{\mathcal{H}_{0}^{s}}=\rho_{k}\right\} \geqslant 0$,
		    	\item [(iii)] $ d_{k}:=\inf \left\{\mathcal{J}_{\lambda,\mu}(u):u \in Z_{k}, \|u\|_{\mathcal{H}_{0}^{s}} \leqslant \rho_{k}\right\} \rightarrow 0, \, k \rightarrow +\infty$;
		    \end{itemize}
	    \item [\textbf{(B2)}] $\mathcal{J}_{\lambda,\mu}$ satisfies the $(P S)_{c}^{*}$ condition for every $c \in\left[d_{k_{0}}, 0)\right.$.
	\end{itemize}
	
	Here the $(PS)_{c}^{*}$ condition (with respect to $(Y_{n})$, $Y_{n} \rightarrow \mathcal{H}_{0}^{s}$ as $n\rightarrow +\infty$)  is: if any sequence $\{u_{n_{j}}\} \subset \mathcal{H}_{0}^{s}$ such that
	\[n_{j} \rightarrow +\infty, u_{n_{j}} \in Y_{n_{j}}, \mathcal{J}_{\lambda,\mu}(u_{n_{j}}) \rightarrow c,\left.\mathcal{J}_{\lambda,\mu} \right|_{Y_{n_{j}}} ^{\prime}(u_{n_{j}}) \rightarrow 0\]
	contains a convergent subsequence.
	
	\begin{proof}[\bf{Proof of Theorem \ref{infinite positive and negative energy sol theorem}}]
		Since $f_{\lambda,\mu}(x,u)=\lambda|u|^{p-2}u + \mu g(x,u)$ with $1<p<2$, and $g$ satisfies (H1), (H3) and (H4), we can deduce that $f_{\lambda,\mu}$ also satisfies (H1), (H3) and (H4).
		Noticing $f_{\lambda,\mu}$ satisfies (S), the existence of $\{u_k\}_{k\in \mathbb{N}}$ is proved as a corollary of Theorem \ref{infinitely many sol Theorem}. To seek $\{v_k\}$, we start by proving the geometric feature (B1) of $\mathcal{J}_{\lambda,\mu}$.\par 
		\textbf{Step 1} We first prove that $\exists~k_{0}\in \mathbb{R}$ such that for any $k\geqslant  k_{0}$, $\exists ~\rho_{k}>0$, 
		\[a_{k}= \inf\left\{\mathcal{J}_{\lambda,\mu}(u):u \in Z_{k}, \|u\|_{\mathcal{H}_{0}^{s}}=\rho_{k}\right\} \geqslant 0.\]

		By \eqref{estimate of F}, Lemmata~\ref{Lemma:X is embedded in fractional Sobolev space} and \ref{Lamma Fountain}, we have, for any $u\in Z_{k}$
		\begin{equation}\label{15}
			\begin{aligned}
				\mathcal{J}_{\lambda,\mu}(u)&= \frac{1}{2} \|u\|_{\mathcal{H}_{0}^{s}}^{2}-\frac{\lambda}{p}|u|^{p}_{p}-\mu\int_{\Omega}G(x,u)dx\\
				&= \frac{1}{2} \|u\|_{\mathcal{H}_{0}^{s}}^{2}-\frac{\lambda}{p}\left|\frac{u}{\|u\|_{\mathcal{H}_{0}^{s}}}\right|^{p}_{p} \|u\|_{\mathcal{H}_{0}^{s}}^{p}-\mu\int_{\Omega}G(x,u)dx\\
				&\geqslant \frac{1}{2} \|u\|_{\mathcal{H}_{0}^{s}}^{2}-\frac{\lambda}{p} \beta_{k}^{p} \|u\|_{\mathcal{H}_{0}^{s}}^{p}-\mu\int_{\Omega} \left(\varepsilon|u|^{2}+\delta(\varepsilon)|u|^{q}\right)dx\\
				&\geqslant \|u\|_{\mathcal{H}_{0}^{s}}^{2}\left(1/2 -\lambda/p \beta_{k}^{p} \|u\|_{\mathcal{H}_{0}^{s}}^{p-2} -\mu\varepsilon C_{2} -\mu C_{q}(\varepsilon)\|u\|_{\mathcal{H}_{0}^{s}}^{q-2} \right).
			\end{aligned}
		\end{equation}
		
		Taking $\varepsilon= \varepsilon_{0}:= \frac{1}{8\mu C_{2}}$, $R= \left( \frac{1}{8\mu C_{q}(\varepsilon_0)}\right)^{1/(q-2)}$, we know that for any $u\in Z_{k}$ with $ \|u\|_{\mathcal{H}_{0}^{s}} \leqslant R$,
		\begin{equation}\label{17}
			\mathcal{J}_{\lambda,\mu}(u) \geqslant  \|u\|_{\mathcal{H}_{0}^{s}}^{2} \left(\frac{1}{4}  -\frac{\lambda}{p} \beta_{k}^{p} \|u\|_{\mathcal{H}_{0}^{s}}^{p-2}\right).
		\end{equation}
		
		Taking $\|u\|_{\mathcal{H}_{0}^{s}} = \rho_{k}:= \min \left\{ R, \left( \frac{4\lambda \beta_{k}^{p}}{p} \right)^{\frac{1}{2-p}} \right\}$, we can deduce that $\mathcal{J}_{\lambda,\mu}(u) \geqslant 0$. Since $1<p<2$, we have $\rho_{k} \rightarrow 0$ as $k\rightarrow +\infty$ by Lemma \ref{Lamma Fountain}.\par 
		As a consequence, for any $k \geqslant k_{0}$, there exists $\rho_{k}>0$ such that (ii) is valid.\par 
		\textbf{Step 2} We next prove $ d_{k}:=\inf \left\{\mathcal{J}_{\lambda,\mu}(u):u \in Z_{k}, \|u\|_{\mathcal{H}_{0}^{s}} \leqslant \rho_{k}\right\} \rightarrow 0, k \rightarrow +\infty.$\par
		Indeed, for any $u \in Z_{k}$ with $\|u\|_{\mathcal{H}_{0}^{s}} \leqslant \rho_{k} \leqslant R$, we have 
		$\mathcal{J}_{\lambda,\mu}(u) \geqslant -\frac{\lambda}{p} \beta_{k}^{p} \|u\|_{\mathcal{H}_{0}^{s}}^{p} \geqslant -\frac{\lambda}{p} \beta_{k}^{p} R^{p}$
		by \eqref{17}. Then we have 
		\[0=\mathcal{J}_{\lambda,\mu}(0) \geqslant \inf\left\{\mathcal{J}_{\lambda,\mu}(u):u \in Z_{k}, \|u\|_{\mathcal{H}_{0}^{s}} \leqslant \rho_{k}\right\} \geqslant -\frac{\lambda}{p} \beta_{k}^{p} R^{p} \rightarrow 0 \]
		as $k \rightarrow +\infty$ by Lemma \ref{Lamma Fountain}. Thus $d_{k}\rightarrow 0$ as $k \rightarrow +\infty$.\par 
		\textbf{Step 3} As for (i), we just need to prove: for any $u\in Y_{k}, \|u\|_{\mathcal{H}_{0}^{s}}=\gamma_{k}$, $\mathcal{J}_{\lambda,\mu} < 0$. Indeed, for any $u\in Y_{k}$, we have
		\[\begin{aligned}
			\mathcal{J}_{\lambda,\mu}(u)&= \frac{1}{2} \|u\|_{\mathcal{H}_{0}^{s}}^{2}-\frac{\lambda}{p}|u|^{p}_{p}-\mu\int_{\Omega}G(x,u)dx
			\leqslant \frac{1}{2} \|u\|_{\mathcal{H}_{0}^{s}}^{2}-\frac{\lambda}{p}|u|^{p}_{p}+\mu \varepsilon|u|_{2}^{2}+\mu \delta(\varepsilon)|u|_{q}^{q}\\
			&\leqslant \frac{1}{2} \|u\|_{\mathcal{H}_{0}^{s}}^{2}-\frac{\lambda}{p} C_{k,p}^{p} \|u\|^{p}_{\mathcal{H}_{0}^{s}} +\mu \varepsilon C_{2}\|u\|_{\mathcal{H}_{0}^{s}}^{2} +\mu C_{q}(\varepsilon)\|u\|_{\mathcal{H}_{0}^{s}}^{q}\\
			&= \|u\|^{p}_{\mathcal{H}_{0}^{s}} \left( \left(\frac{1}{2} + \mu  \varepsilon C_{2} \right) \|u\|_{\mathcal{H}_{0}^{s}}^{2-p}-\frac{\lambda}{p} C_{k,p}^{p}+\mu C_{q}(\varepsilon)\|u\|_{\mathcal{H}_{0}^{s}}^{q-p} \right)
		\end{aligned}\]
		by \eqref{estimate of F}, \eqref{finite dim eq norm} and Lemma~\ref{Lemma:X is embedded in fractional Sobolev space}.\par 
		Since $1<p<2<q<2_{s}^{*}$, we have 
		\[\lim_{\|u\|_{\mathcal{H}_{0}^{s}} \rightarrow 0} \left( \left(\frac{1}{2} + \mu  \varepsilon C_{2} \right) \|u\|_{\mathcal{H}_{0}^{s}}^{2-p}-\frac{\lambda}{p} C_{k,p}^{p}+\mu C_{q}(\varepsilon)\|u\|_{\mathcal{H}_{0}^{s}}^{q-p} \right) = -\frac{\lambda}{p} C_{k,p}^{p}.\]\par 
		Thus (i) follows by taking $\gamma_{k}$ sufficiently small.\par 
		Similar to the proofs of Proposition~\ref{PS bounded sequence} and \ref{PSc convergent sequence}, we can deduce that $(P S)_{c}^{*}$ condition is verified.  
		Using Dual Fountain Theorem, we obtain a sequence of solutions $\{v_k\}$ such that  $\mathcal{J}_{\lambda,\mu}(v_k)\rightarrow 0^-$, $k\rightarrow +\infty$.
	\end{proof}

   \begin{proof}[\bf{Proof of Theorem \ref{infinite positive and negative energy sol theorem-2}}]
	When $\mu>0,\lambda \in \mathbb{R}$, we can easily deduce that $f_5$ satisfies (H1), (AR) and (S). Thus, part $(a)$ is shown as a consequence of Theorem \ref{negative energy sol theorem}.\par
	We now prove part $(b)$. Noticing that (B1) can be verified in the same way as in Theorem \ref{infinite positive and negative energy sol theorem}, we only need to check the $(PS)_{c}^{*}$ condition.\par

	Consider the $(PS)_{c}^{*}$ sequence $\{u_{n_{j}}\} \subset \mathcal{H}_{0}^{s}$.
	For $n_{j}$ large enough, we have 
	\[ \mathcal{J}_{\lambda,\mu} (u_{n_{j}}) \leqslant c+1 , \quad \left.\mathcal{J}_{\lambda,\mu} \right|_{Y_{n_{j}}} ^{\prime}(u_{n_{j}}) \leqslant 1.\]
	
	Thus
	\begin{equation}\label{22}
		\begin{aligned}
			&\quad \mathcal{J}_{\lambda,\mu}(u_{n_{j}}) - \frac{1}{q} \left \langle \mathcal{J}_{\lambda,\mu}^{\prime}(u_{n_{j}}), u_{n_{j}} \right \rangle_{\mathcal{H}_{0}^{s}}
			\leqslant c+1+\left| \left \langle \mathcal{J}_{\lambda,\mu}^{\prime}(u_{n_{j}}), u_{n_{j}} \right \rangle_{\mathcal{H}_{0}^{s}} \right|\\
			&\leqslant c+1+ \left\| \mathcal{J}_{\lambda,\mu}^{\prime}(u_{n_{j}}) \right\|_{\mathcal{H}_{0}^{s}} \| u_{n_{j}} \|_{\mathcal{H}_{0}^{s}}
			\leqslant c+1+\| u_{n_{j}} \|_{\mathcal{H}_{0}^{s}}.
		\end{aligned}		
	\end{equation}
	On the other hand,
	\begin{equation}\label{23}
		\begin{aligned}
			&\quad \mathcal{J}_{\lambda,\mu}(u_{n_{j}}) - \frac{1}{q} \left \langle \mathcal{J}_{\lambda,\mu}^{\prime}(u_{n_{j}}), u_{n_{j}} \right \rangle_{\mathcal{H}_{0}^{s}}\\
			&=\left(1/2 -1/q \right)\|u_{n_{j}}\|_{\mathcal{H}_{0}^{s}}^{2}
			-\left(\lambda/p - \lambda/q \right)|u_{n_{j}}|^{p}_{p}
			+\left(\mu/q-\mu/q\right)|u_{n_{j}}|^{q}_{q}\\
			&=\left(\frac{1}{2} - \frac{1}{q} \right)\|u_{n_{j}}\|_{\mathcal{H}_{0}^{s}}^{2}
			-\left(\frac{\lambda}{p} -\frac{\lambda}{q} \right)\left|\frac{u_{n_{j}}}{\|u_{n_{j}}\|_{\mathcal{H}_{0}^{s}}}\right|^{p}_{p}\|u_{n_{j}}\|_{\mathcal{H}_{0}^{s}}^{p}\\
			&\geqslant \left(1/2 -1/q  \right)\|u_{n_{j}}\|_{\mathcal{H}_{0}^{s}}^{2}
			-\left(\lambda/p - \lambda/q \right)\beta_{0}^{p}\|u_{n_{j}}\|_{\mathcal{H}_{0}^{s}}^{p}.
		\end{aligned}		
	\end{equation}\par 
	Combining \eqref{22} and \eqref{23}, we have 
	\[ (1/2 - 1/q )\|u_{n_{j}}\|_{\mathcal{H}_{0}^{s}}^{2} \leqslant c+1+ \| u_{n_{j}} \|_{\mathcal{H}_{0}^{s}} + (\lambda/p -\lambda/q )\beta_{0}^{p}\|u_{n_{j}}\|_{\mathcal{H}_{0}^{s}}^{p}. \]\par 
	We can deduce that $\{u_{n_{j}}\}$ is bounded in $\mathcal{H}_{0}^{s}$. Indeed, we suppose, by contradiction, that up to a subsequence, still denoted by $\{u_{n_{j}}\}$,
	\[ 0<\frac{1}{2} - \frac{1}{q} \leqslant \frac{c+1}{\|u_{n_{j}}\|_{\mathcal{H}_{0}^{s}}^{2}}+ \frac{1}{\|u_{n_{j}}\|_{\mathcal{H}_{0}^{s}}}+ \left(\frac{\lambda}{p} -\frac{\lambda}{q} \right)\beta_{0}^{p} \frac{1}{\|u_{n_{j}}\|_{\mathcal{H}_{0}^{s}}^{2-p}} \rightarrow 0, \text{ as }j\rightarrow +\infty,\]
	 thanks to $1<p<2<q<2^{*}_{s}$. Thus $\{u_{n_{j}}\}$ is bounded in $\mathcal{H}_{0}^{s}$. The convergent subsequence can be obtained as in the proof of Proposition~\ref{PSc convergent sequence}.\par  
	As a consequence, part $(b)$ is proved by Dual Fountain Theorem.
\end{proof}

\section*{Appendix}\setcounter{equation}{0}
\renewcommand{\theequation}{A.\arabic{equation}}
	In this section, we study 
	the weak eigenvalue problem associated to $(-\Delta)^{s}:$
	\begin{equation}\label{eigen problem}
		\left\{\begin{array}{l}
			\langle u, \varphi \rangle_{\mathcal{H}_{0}^{s}}=\lambda \int_{\Omega} u(x) \varphi(x) d x, \quad \forall \varphi \in \mathcal{H}_{0}^{s} \\
			u \in \mathcal{H}_{0}^{s}.
		\end{array}\right.
	\end{equation}
    
    \textbf{Lemma A.1}
		(i) problem \eqref{eigen problem} admits an eigenvalue 
		\[\lambda_{1}=\min \{~\|u\|_{\mathcal{H}_{0}^{s}}^{2}: u \in \mathcal{H}_{0}^{s}, |u|_{2}=1 \}>0.\]
		And there exists a non-trivial function $e_{1} \in \mathcal{H}_{0}^{s}$ such that $|e_{1}|_{2}=1$, which is an eigenfunction corresponding to $\lambda_{1}$, attaining the minimum.

		(ii) the set of the eigenvalues of problem \eqref{eigen problem} is a sequence $\left\{\lambda_{k}\right\}_{k \in \mathbb{N}}$ with 
		\[0<\lambda_{1}\leqslant\lambda_{2} \leqslant \cdots \leqslant \lambda_{k} \leqslant \lambda_{k+1} \leqslant \cdots\]
		and
		$\lambda_{k} \rightarrow+\infty \text { as } k \rightarrow+\infty$.
		Moreover, for any $k \in \mathbb{N}$ the eigenvalues can be characterized as
		\begin{equation}\label{eigenvalue}
			\lambda_{k+1}=\min \{~ \|u\|_{\mathcal{H}_{0}^{s}}^{2}: u \in \mathbb{P}_{k+1}, |u|_{2}=1\}
		\end{equation}
		where
		$\mathbb{P}_{k+1}:=\left\{u \in \mathcal{H}_{0}^{s} \text { s.t. }\langle u, e_{j}\rangle_{\mathcal{H}_{0}^{s}}=0 \quad \forall j=1, \cdots, k\right\}$.
		
		(iii) the sequence $\left\{e_{k}\right\}_{k \in \mathbb{N}}$ of eigenfunctions corresponding to $\lambda_{k}$ is an orthonormal basis of $L^{2}(\Omega)$ and an orthogonal basis of $\mathcal{H}_{0}^{s}$.  
	
	To prove the above lemma, we give the following claim inspired by \cite{MR3002745}.\\
	\textbf{Claim A.1}
		Let $\mathcal{F}: \mathcal{H}_{0}^{s} \rightarrow \mathbb{R}$ be the functional defined as 
		$\mathcal{F}(u)=\frac{1}{2}\|u\|_{\mathcal{H}_{0}^{s}}^{2} .$
		Then 
		
		$(a)$ if $X_* \neq \emptyset$ is a weakly closed subspace of $\mathcal{H}_{0}^{s}$ and $\mathcal{M}_*:=\{u \in$ $\left.X_*:|u|_{2}=1\right\}$, then there exists $u_* \in \mathcal{M}_*$ such that $\min _{u \in \mathcal{M}_*} \mathcal{F}(u)=\mathcal{F}\left(u_*\right)$
		and
		\[\left\langle u_*, \varphi\right\rangle_{\mathcal{H}_{0}^{s}}=\lambda_*(u_*,\varphi), \quad \forall \varphi \in X_*\]
		where $\lambda_*:=2\mathcal{F}(u_*)$. Here and in the sequel, we denote the product on $L^{2}$ by $(\cdot, \cdot )$.\par
		$(b)$ if $\lambda \neq \tilde \lambda$ are two different eigenvalues of problem \eqref{eigen problem}, with eigenfunctions $e$ and $\tilde{e} \in \mathcal{H}_{0}^{s}$, respectively, then
		\begin{equation}\label{12}
			\langle e, \tilde{e}\rangle_{\mathcal{H}_{0}^{s}}=0=(e,\tilde{e}) .
		\end{equation}
		And if $k, h \in \mathbb{N}, k \neq h$, then
		\begin{equation}\label{10}
			\left\langle e_{k}, e_{h}\right\rangle_{\mathcal{H}_{0}^{s}}=0=( e_{k}, e_{h}) .
		\end{equation}\par
		$(c)$ if $e$ is an eigenfunction of problem \eqref{eigen problem} corresponding to an eigenvalue $\lambda$, then
		$ \| e \|_{\mathcal{H}_{0}^{s}}=\lambda|e|_{2}^{2}$.

	\begin{proof}
		We only prove part $(a)$. Let $\{u_{j}\}_{j\in \mathbb{N}} \subset \mathcal{M}_*$ be a minimizing sequence for $\mathcal{F}$, that is
		\[\lim \limits_{j \rightarrow \infty} \mathcal{F} (u_{j})=\inf _{\mathcal{M}_*}\mathcal{F}(u)=\inf _{\mathcal{M}_*}\frac{1}{2}\|u\|_{\mathcal{H}_{0}^{s}}^{2} \geqslant 0.\]\par
		Then $\{u_{j}\}$ is bounded in $\mathcal{H}_{0}^{s}$. By Lemma \ref{X10 compact embedded in Lq}, up to a subsequence, still denoted by $\{u_{j}\}$, there exists $u_{*} \in \mathcal{H}_{0}^{s}$ such that
		\[  u_{j} \rightharpoonup u_{*} \text { in } \mathcal{H}_{0}^{s} \text{ and }
			u_{j} \rightarrow u_{*} \text { in } L^{2}(\Omega) .\]\par 
		Since $|u_{j}|_{2}=1$,we know that $|u_{*}|_{2}=1$ and $u_{*} \in \mathcal{M}_*$. According to the weak lower semi-continuity of the norm, we deduce that
		\[\lim \limits_{j \rightarrow \infty} \mathcal{F}(u_{j})  
		\geqslant \mathcal{F}\left(u_*\right) \geqslant \inf _{u \in \mathcal{M}_*} \mathcal{F}(u),\]
		which implies that $\mathcal{F}\left(u_{*}\right)=\inf _{\mathcal{M_*}} \mathcal{F}(u)$. 
		
		Let $\varepsilon \in(-1,1)$ and $v \in X_*$. Define $u_{\varepsilon}=\frac{u_{*}+\varepsilon v}{|u_{*}+\varepsilon v|_{2}}$. Then $u_{\varepsilon} \in \mathcal{M}_*$ and
		\[
		\begin{aligned}
			2\mathcal{F}(u_{\varepsilon}) &=\left \langle u_{\varepsilon},u_{\varepsilon} \right\rangle_{\mathcal{H}_{0}^{s}} =\frac{\|u_{*}\|_{\mathcal{H}_{0}^{s}}^{2}+2 \varepsilon\langle u_{*}, v\rangle_{\mathcal{H}_{0}^{s}}+\varepsilon^{2}\|v\|_{\mathcal{H}_{0}^{s}}^{2}}{1+2 \varepsilon \int_{\Omega} u_{*} v d x+\varepsilon^{2}|v|_{2}^{2}} \\
			& =\frac{\|u_{*}\|_{\mathcal{H}_{0}^{s}}^{2}+2 \varepsilon \langle u_{*}, v\rangle_{\mathcal{H}_{0}^{s}}+\varepsilon^{2}\|v\|_{\mathcal{H}_{0}^{s}}^{2}}{1-4 \varepsilon^{2}(\int_{\Omega} u_{*} v d x)^{2}+2 \varepsilon^{2}|v|_{2}^{2}+\varepsilon^{4}|v|_{2}^{4}}\left(1-2 \varepsilon \int_{\Omega} u_{*} v d x+\varepsilon^{2}|v|_{2}^{2}\right) \\
			& \leqslant \frac{\|u_{*}\|_{\mathcal{H}_{0}^{s}}^{2}+2 \varepsilon\langle u_{*}, v\rangle_{\mathcal{H}_{0}^{s}}+\varepsilon^{2}\|v\|_{\mathcal{H}_{0}^{s}}^{2}}{(1-\varepsilon^{2}|v|_{2}^{2})^{2}}\left(1-2 \varepsilon \int_{\Omega} u_{*} v d x+\varepsilon^{2}|v|_{2}^{2}\right) \\
			& =\frac{1}{(1-\varepsilon^{2}|v|_{2}^{2})^{2}}\left(2 \mathcal{F}\left(u_{*}\right)+2 \varepsilon(\left\langle u_{*}, v\right\rangle_{\mathcal{H}_{0}^{s}}-2 \mathcal{F}\left(u_{*}\right) \int_{\Omega} u_{*} v d x)+o(\varepsilon)\right) ,
		\end{aligned}\]
		where the above inequality  holds because $\int_{\Omega} u_* v dx \leqslant \left| u_*\right|_2|v|_2=|v|_{2}.$\par
		Let $\varepsilon$ be sufficiently small, we have $\left\langle u_{*}, v\right\rangle_{\mathcal{H}_{0}^{s}}-2 \mathcal{F}\left(u_{*}\right) (u_{*}, v)=0.$		
	\end{proof}
	
	\begin{proof}[\bf {Proof of Lemma~A.1}]
		We argue as in the proof of \cite[Proposition 9]{MR3002745}. For the sake of completeness of the paper, we provide a brief sketch.\par  
		(i) Applying Claim~A.1-(a) with $X_*:=\mathcal{H}_{0}^{s}$, we obtain Lemma~A.1-(i).\par
		(ii) We divide the proof of Lemma~A.1-(ii) into the following five steps.\par 
		\textbf{Step 1}. Since $\mathbb{P}_{k+1} \subseteq \mathbb{P}_{k} \subseteq \mathcal{H}_{0}^{s}$, we have that
		\[0<\lambda_{1} \leqslant \lambda_{2} \leqslant \cdots \leqslant \lambda_{k} \leqslant \lambda_{k+1} \leqslant \cdots.\]\par 
		\textbf{Step 2}. Applying Claim~A.1-(a) with $X_*:=\mathbb{P}_{k+1}$,  which is weakly closed, the minimum of $\lambda$ exists and it is attained at some $e_{k+1} \in \mathbb{P}_{k+1}$. Also, we have
		\begin{equation}\label{8}
			\langle e_{k+1}, \varphi \rangle_{\mathcal{H}_{0}^{s}}=\lambda_{k+1} ( e_{k+1}, \varphi) \quad \forall \varphi \in \mathbb{P}_{k+1}.
		\end{equation}\par 
		\textbf{Step 3}. In order to show that $\lambda_{k+1}$ is an eigenvalue with eigenfunction $e_{k+1}$, we need to show that
		formula \eqref{8} holds for any $\varphi \in \mathcal{H}_{0}^{s}$, not only in $\mathbb{P}_{k+1}$.\par 
		Since  $\lambda_{1}$ is an eigenvalue, as shown in (i), we argue recursively, assuming that the claim holds for $1, \cdots, k$ and proving it for $k+1$. We use the direct sum decomposition
		$\mathcal{H}_{0}^{s}=\operatorname{span}\left\{e_{1}, \cdots, e_{k}\right\} \oplus \mathbb{P}_{k+1}.$\par 
		Thus, given any $\varphi \in \mathcal{H}_{0}^{s}$, we write $\varphi=\varphi_{1}+\varphi_{2}$, with $\varphi_{2} \in \mathbb{P}_{k+1}$
		and $\varphi_{1}=\sum_{i=1}^{k} c_{i} e_{i}$,
		for some $c_{1}, \cdots, c_{k} \in \mathbb{R}$. Then, from \eqref{8} tested with $\varphi_{2}=\varphi-\varphi_{1}$, we know that
		\begin{equation}\label{9}
			\begin{aligned}
				&\quad \langle e_{k+1}, \varphi \rangle_{\mathcal{H}_{0}^{s}}-\lambda_{k+1} ( e_{k+1}, \varphi) =\langle e_{k+1}, \varphi_1\rangle_{\mathcal{H}_{0}^{s}}-\lambda_{k+1} ( e_{k+1}, \varphi_{1})\\&=\sum \nolimits_{i=1}^{k} c_{i}\left[\langle e_{k+1},e_i\rangle_{\mathcal{H}_{0}^{s}}-\lambda_{k+1} (e_{k+1}, e_{i})\right].
			\end{aligned}
		\end{equation}\par
		Furthermore, by inductive assumption, $\lambda_i$ is an eigenvalue and $e_i$ is the corresponding eigenfunction. Testing the eigenvalue equation for $e_{i}$ by test function $e_{k+1}$ for $i=$ $1, \cdots, k$ and recalling that $e_{k+1}$ $\in \mathbb{P}_{k+1}$, we see that
		$	0=\langle e_{k+1},e_i\rangle_{\mathcal{H}_{0}^{s}}=\lambda_{i} (e_{k+1}, e_{i}).$
		
		Thanks to \eqref{10} and $\lambda_i>0$, we have
		$\langle e_{k+1},e_i\rangle_{\mathcal{H}_{0}^{s}}=0=(e_{k+1}, e_{i})$
		for any $i=1, \cdots, k$. By plugging this into \eqref{9}, we conclude that \eqref{8} holds true for any $\varphi \in \mathcal{H}_{0}^{s}$, that is $\lambda_{k+1}$ is an eigenvalue with eigenfunction $e_{k+1}$.\par
		 
		\textbf{Step 4}. Now we prove $\lambda_{k} \rightarrow +\infty$. 
		
		Suppose, by contradiction, that $\lambda_{k} \rightarrow c$ for some constant $c \in \mathbb{R}$. Then $\lambda_{k}$ is bounded in $\mathbb{R}$. Since $\left\|e_{k}\right\|_{\mathcal{H}_{0}^{s}}^{2}=\lambda_{k}$ by Claim~A.1-(c) , we deduce by Lemma \ref{X10 compact embedded in Lq} that there is a subsequence for which
		\[e_{k_{j}} \rightarrow e_{\infty} \quad \text { in } L^{2}(\Omega)\]
		as $k_{j} \rightarrow+\infty$, for some $e_{\infty} \in L^{2}(\Omega)$. In particular, $e_{k_{j}}$ is a Cauchy sequence in $ L^{2}(\Omega) $. 
		But, from \eqref{10}, $e_{k_{j}}$ and $e_{k_{i}}$ are orthogonal in $L^{2}(\Omega)$, so
		\[|e_{k_{j}}-e_{k_{i}}|_{2}^{2}=|e_{k_{j}}|_{2}^{2}+|e_{k_{i}}|_{2}^{2}=2 .\]\par
		This is a contradiction, which implies $\lambda_{k} \rightarrow +\infty$.\par
		\textbf{Step 5}. The sequence of eigenvalues constructed in \eqref{eigenvalue} exhausts all the eigenvalues of the problem, i.e. that any eigenvalue of problem \eqref{eigen problem} can be written in the form \eqref{eigenvalue}. 
		
		By contradiction, we suppose that there exists an eigenvalue $\lambda \notin\left\{\lambda_{k}\right\}_{k \in \mathbb{N}}$,
		and let $e \in \mathcal{H}_{0}^{s}$ be an eigenfunction relative to $\lambda$, normalized so that $|e|_{2}=1$. \par 
		By Claim~A.1-(c), we have that $2 \mathcal{F}(e)=\|e\|_{\mathcal{H}_{0}^{s}}^{2} =\lambda$. 
		Since $\lambda \notin \left\{\lambda_{k}\right\}_{k\in \mathbb{N}}$ and $\lambda_{k} \rightarrow \infty$, then there exists $ k\in \mathbb{N}$ such that $\lambda_{k}<\lambda<\lambda_{k+1}.$\par 
		We claim that $e\notin \mathbb{P}_{k+1}$.
		Indeed, if $e\in \mathbb{P}_{k+1}$, then $\lambda=2\mathcal{F}(e)\geqslant \lambda_{k+1}$. This is a contradiction. 
		
		Since $e\notin \mathbb{P}_{k+1}$, then there exists $ i\in {1, \cdots, k}$ such that $\langle e,e_i\rangle_{\mathcal{H}_{0}^{s}} \neq 0$. This is a contradiction with Claim~A.1-(b). So that
		$\lambda \in \left\{\lambda_{k}\right\}_{k\in \mathbb{N}}$.
		
		(iii) We give the proof of Lemma~A.1-(iii) with the following three steps.\par
		\textbf{Step1} The orthogonality follows from Claim~A.1-(b).\par
		\textbf{Step2} $\left\{e_{k}\right\}_{k \in \mathbb{N}}$ is a basis of $\mathcal{H}_{0}^{s}$. 
		
		We first claim: 
			if $v \in \mathcal{H}_{0}^{s}$ is such that $\left\langle v, e_{k}\right\rangle_{\mathcal{H}_{0}^{s}}=0$ for any $k \in \mathbb{N}$
			, then $v \equiv 0$.
			 
		We define $E_{i}:=e_{i} /\left\|e_{i}\right\|_{\mathcal{H}_{0}^{s}}$ and, for a given $f \in \mathcal{H}_{0}^{s}$, define $v_{j}:=f-f_{j}$ where
		\[f_{j}:=\sum \nolimits_{i=1}^{j}\left\langle f, E_{i}\right\rangle_{\mathcal{H}_{0}^{s}} E_{i} .\]\par
		By the orthogonality of $\left\{e_{k}\right\}_{k \in \mathbb{N}}$ in $\mathcal{H}_{0}^{s}$,
		\[\begin{aligned}
			0& \leqslant  \|v_{j}\|_{\mathcal{H}_{0}^{s}}^{2}=\langle v_{j}, v_{j}\rangle_{\mathcal{H}_{0}^{s}} 
			 =\|f\|_{\mathcal{H}_{0}^{s}}^{2}+\|f_{j}\|_{\mathcal{H}_{0}^{s}}^{2}-2\langle f, f_{j}\rangle_{\mathcal{H}_{0}^{s}}\\
			&=\|f\|_{\mathcal{H}_{0}^{s}}^{2}+\langle f_{j}, f_{j}\rangle_{\mathcal{H}_{0}^{s}}-2 \sum\nolimits_{i=1}^{j}\langle f, E_{i}\rangle_{\mathcal{H}_{0}^{s}}^{2} 
			=\|f\|_{\mathcal{H}_{0}^{s}}^{2}-\sum \nolimits_{i=1}^{j}\langle f, E_{i}\rangle_{\mathcal{H}_{0}^{s}}^{2} .
		\end{aligned}\]\par
		Therefore,  $\sum \nolimits_{i=1}^{j}\left\langle f, E_{i}\right\rangle_{\mathcal{H}_{0}^{s}}^{2} \leqslant\|f\|_{\mathcal{H}_{0}^{s}}^{2}$ for any $j \in \mathbb{N}$.  
		And so
		$\sum\nolimits_{i=1}^{+\infty}\left\langle f, E_{i}\right\rangle_{\mathcal{H}_{0}^{s}}^{2}$ is a convergent series, i.e.,
		\[\tau_{j}:=\sum \nolimits_{i=1}^{j}\left\langle f, E_{i}\right\rangle_{\mathcal{H}_{0}^{s}}^{2}\] is a Cauchy sequence in $\mathbb{R}$.\par
		Moreover, using again the orthogonality of $\left\{e_{k}\right\}_{k \in \mathbb{N}}$ in $\mathcal{H}_{0}^{s}$, we see that, if $J>j$,
		\[\left\|v_{J}-v_{j}\right\|_{\mathcal{H}_{0}^{s}}^{2}=\left \|\sum \nolimits_{i=j+1}^{J}\left\langle f, E_{i}\right\rangle_{\mathcal{H}_{0}^{s}} E_{i}\right\|_{\mathcal{H}_{0}^{s}}^{2} 
		=\sum \nolimits_{i=j+1}^{J}\left\langle f, E_{i}\right\rangle_{\mathcal{H}_{0}^{s}}^{2}=\tau_{J}-\tau_{j} .\]
		Thus $v_{j}$ is a Cauchy sequence in $\mathcal{H}_{0}^{s}$, and there exists $v \in \mathcal{H}_{0}^{s}$ such that
		\[v_{j} \rightarrow v \text { in } \mathcal{H}_{0}^{s} \text { as } j \rightarrow+\infty \text {. }\]\par
		For $j \geqslant k$,
		$\langle v_{j}, E_{k}\rangle_{\mathcal{H}_{0}^{s}}=\left\langle f, E_{k}\right\rangle_{\mathcal{H}_{0}^{s}}-\langle f_{j}, E_{k}\rangle_{ \mathcal{H}_{0}^{s}}
		=0 .$
		
		Taking $j \rightarrow \infty$, we have $\left\langle v, E_{k}\right\rangle_{\mathcal{H}_{0}^{s}}=0$ for any $k \in \mathbb{N}$. So $v=0$. Thus,
		\[f=\sum \nolimits_{i=1}^{+\infty}\left\langle f, E_{i}\right\rangle_{\mathcal{H}_{0}^{s}} E_{i} ,\]
		that is to say $\left\{e_{k}\right\}_{k \in \mathbb{N}}$ is a basis in $\mathcal{H}_{0}^{s}$.
		
		\textbf{Step 3}  $\{e_{k}\}_{k \in \mathbb{N}}$ is a basis of $L^{2}(\Omega)$.
		
		 Take $v \in L^{2}(\Omega)$ and let $v_{j} \in C_{c}^{\infty}(\Omega)$ be such that $|v_{j}-v|_{2} \leqslant 1 / j$. Since $\left\{e_{k}\right\}_{k \in \mathbb{N}}$ is a basis for $\mathcal{H}_{0}^{s}$, there exists $k_{j} \in \mathbb{N}$ and a function $w_{j}$, belonging to $\operatorname{span}\{e_{1}, \cdots, e_{k_{j}}\}$ such that $\|v_{j}-w_{j}\| _{\mathcal{H}_{0}^{s}} \leqslant \frac{1}{j} .$ 
		By Lemma \ref{equivalent norm}, we have
		\[ |v-w_{j}|_{2} \leqslant|v-v_{j}|_{2}+|v_{j}-w_{j}|_{2} \leqslant \frac{1}{j}+C\left\|v_{j}-w_{j}\right\|_{\mathcal{H}_{0}^{s}}\leqslant(C+1) / j .\]
		This shows that the sequence $\left\{e_{k}\right\}$ of eigenfunctions of \eqref{eigen problem} is a basis of $L^{2}(\Omega)$. 
	\end{proof}

\section*{Acknowledgment}	
	 Xifeng Su is supported by National Natural Science Foundation of China (Grant No. 12371186, 11971060).

\bibliographystyle{is-abbrv}	
	\bibliography{reference}

\begin{thebibliography}{10}

\bibitem{Abatangelo22}
N.~Abatangelo.
\newblock Higher-order fractional {L}aplacians: an overview.
\newblock In {\em Bruno {P}ini {M}athematical {A}nalysis {S}eminar 2021},
  volume 12(1) of {\em Bruno Pini Math. Anal. Semin.}, pages 53--80. Univ.
  Bologna, Alma Mater Stud., Bologna, 2022.

\bibitem{MR3856149}
N.~Abatangelo, S.~Jarohs, and A.~Salda\~{n}a.
\newblock On the loss of maximum principles for higher-order fractional
  {L}aplacians.
\newblock {\em Proc. Amer. Math. Soc.}, 146\penalty0 (11):\penalty0 4823--4835,
  2018.

\bibitem{Ambrosetti1994CombinedEO}
A.~Ambrosetti, H.~Brezis, and G.~Cerami.
\newblock Combined effects of concave and convex nonlinearities in some
  elliptic problems.
\newblock {\em Journal of Functional Analysis}, 122:\penalty0 519--543, 1994.

\bibitem{AR73}
A.~Ambrosetti and P.~H. Rabinowitz.
\newblock Dual variational methods in critical point theory and applications.
\newblock {\em J. Functional Analysis}, 14:\penalty0 349--381, 1973.

\bibitem{Bartsch93}
T.~Bartsch.
\newblock Infinitely many solutions of a symmetric {D}irichlet problem.
\newblock {\em Nonlinear Anal.}, 20\penalty0 (10):\penalty0 1205--1216, 1993.

\bibitem{BW95}
T.~Bartsch and M.~Willem.
\newblock On an elliptic equation with concave and convex nonlinearities.
\newblock {\em Proc. Amer. Math. Soc.}, 123\penalty0 (11):\penalty0 3555--3561,
  1995.

\bibitem{Brezisbook}
H.~Brezis.
\newblock {\em Functional analysis, {S}obolev spaces and partial differential
  equations}.
\newblock Universitext. Springer, New York, 2011.
\newblock ISBN 978-0-387-70913-0.
\newblock xiv+599 pp.

\bibitem{CR10}
L.~Caffarelli, J.-M. Roquejoffre, and O.~Savin.
\newblock Nonlocal minimal surfaces.
\newblock {\em Comm. Pure Appl. Math.}, 63\penalty0 (9):\penalty0 1111--1144,
  2010.

\bibitem{CV11}
L.~Caffarelli and E.~Valdinoci.
\newblock Uniform estimates and limiting arguments for nonlocal minimal
  surfaces.
\newblock {\em Calc. Var. Partial Differential Equations}, 41\penalty0
  (1-2):\penalty0 203--240, 2011.

\bibitem{CSS08}
L.~A. Caffarelli, S.~Salsa, and L.~Silvestre.
\newblock Regularity estimates for the solution and the free boundary of the
  obstacle problem for the fractional {L}aplacian.
\newblock {\em Invent. Math.}, 171\penalty0 (2):\penalty0 425--461, 2008.

\bibitem{MR2737789}
S.-Y.~A. Chang and M.~d.~M. Gonz\'{a}lez.
\newblock Fractional {L}aplacian in conformal geometry.
\newblock {\em Adv. Math.}, 226\penalty0 (2):\penalty0 1410--1432, 2011.

\bibitem{DPV12}
E.~Di~Nezza, G.~Palatucci, and E.~Valdinoci.
\newblock Hitchhiker's guide to the fractional {S}obolev spaces.
\newblock {\em Bull. Sci. Math.}, 136\penalty0 (5):\penalty0 521--573, 2012.

\bibitem{DG17}
S.~Dipierro and H.-C. Grunau.
\newblock Boggio's formula for fractional polyharmonic {D}irichlet problems.
\newblock {\em Ann. Mat. Pura Appl. (4)}, 196\penalty0 (4):\penalty0
  1327--1344, 2017.

\bibitem{MR2597943}
L.~C. Evans.
\newblock {\em Partial differential equations}, volume~19 of {\em Graduate
  Studies in Mathematics}.
\newblock American Mathematical Society, Providence, RI, second edition, 2010.
\newblock ISBN 978-0-8218-4974-3.
\newblock xxii+749 pp.

\bibitem{MR3636636}
M.~Fazly and J.~Wei.
\newblock On finite {M}orse index solutions of higher order fractional
  {L}ane-{E}mden equations.
\newblock {\em Amer. J. Math.}, 139\penalty0 (2):\penalty0 433--460, 2017.

\bibitem{MR1965361}
C.~R. Graham and M.~Zworski.
\newblock Scattering matrix in conformal geometry.
\newblock {\em Invent. Math.}, 152\penalty0 (1):\penalty0 89--118, 2003.

\bibitem{MR1911664}
N.~H. Katz and N.~Pavlovi\'{c}.
\newblock A cheap {C}affarelli-{K}ohn-{N}irenberg inequality for the
  {N}avier-{S}tokes equation with hyper-dissipation.
\newblock {\em Geom. Funct. Anal.}, 12\penalty0 (2):\penalty0 355--379, 2002.

\bibitem{MIYAGAKI08}
O.~Miyagaki and M.~Souto.
\newblock Superlinear problems without ambrosetti and rabinowitz growth
  condition.
\newblock {\em Journal of Differential Equations}, 245\penalty0 (12):\penalty0
  3628--3638, 2008.

\bibitem{MR1347689}
S.~G. Samko, A.~A. Kilbas, and O.~I. Marichev.
\newblock {\em Fractional integrals and derivatives}.
\newblock Gordon and Breach Science Publishers, Yverdon, 1993.
\newblock ISBN 2-88124-864-0.
\newblock xxxvi+976 pp.

\bibitem{SV12}
R.~Servadei and E.~Valdinoci.
\newblock Mountain pass solutions for non-local elliptic operators.
\newblock {\em J. Math. Anal. Appl.}, 389\penalty0 (2):\penalty0 887--898,
  2012.

\bibitem{MR3002745}
R.~Servadei and E.~Valdinoci.
\newblock Variational methods for non-local operators of elliptic type.
\newblock {\em Discrete Contin. Dyn. Syst.}, 33\penalty0 (5):\penalty0
  2105--2137, 2013.

\bibitem{Silvestre07}
L.~Silvestre.
\newblock Regularity of the obstacle problem for a fractional power of the
  {L}aplace operator.
\newblock {\em Comm. Pure Appl. Math.}, 60\penalty0 (1):\penalty0 67--112,
  2007.

\bibitem{SV09}
Y.~Sire and E.~Valdinoci.
\newblock Fractional {L}aplacian phase transitions and boundary reactions: a
  geometric inequality and a symmetry result.
\newblock {\em J. Funct. Anal.}, 256\penalty0 (6):\penalty0 1842--1864, 2009.

\bibitem{SVWZ24}
X.~Su, E.~Valdinoci, Y.~Wei, and J.~Zhang.
\newblock Multiple solutions for mixed local and nonlocal elliptic equations.
\newblock {\em Math. Z.}, 308\penalty0 (3):\penalty0 Paper No. 40, 37, 2024.

\bibitem{MR2603802}
T.~Tao.
\newblock Global regularity for a logarithmically supercritical
  hyperdissipative {N}avier-{S}tokes equation.
\newblock {\em Anal. PDE}, 2\penalty0 (3):\penalty0 361--366, 2009.

\bibitem{WS15}
Y.~Wei and X.~Su.
\newblock Multiplicity of solutions for non-local elliptic equations driven by
  the fractional {L}aplacian.
\newblock {\em Calc. Var. Partial Differential Equations}, 52\penalty0
  (1-2):\penalty0 95--124, 2015.

\bibitem{Willembook}
M.~Willem.
\newblock {\em Minimax theorems}, volume~24 of {\em Progress in Nonlinear
  Differential Equations and their Applications}.
\newblock Birkh\"{a}user Boston, Inc., Boston, MA, 1996.
\newblock ISBN 0-8176-3913-6.
\newblock x+162 pp.

\end{thebibliography}
	
\end{document}